\documentclass[11pt]{article}
\usepackage{amsthm,amsmath,amssymb,pgf,tikz,pgfpict2e,soul}
\usepackage{graphicx,enumitem}
\usepackage{algpseudocode}
\usepackage{algorithm}
\theoremstyle{plain}
\newtheorem{thm}{Theorem}[section]
\newtheorem{cor}[thm]{Corollary}

\newtheorem{obs}[thm]{Observation}
\newtheorem{lemma}[thm]{Lemma}
\newtheorem{prop}[thm]{Proposition}
\newtheorem{defn}[thm]{Definition}

\newtheorem{exa}[thm]{Example}
\newtheorem{ques}{Question}[section]

\newcommand{\y}{{\color{yellow!60!red}Y}}
\newcommand{\g}{{\color{green!60!black}G}}

\newcommand{\sbar}{\overline{S}}
\DeclareMathOperator{\bnd}{bnd}

\usepackage{authblk}

\title{Discrete-time treatment number of binary trees}

\author[1]{K.~L.~Collins}
\author[2]{M.-E.~Messinger}
\author[3]{A.~N.~Trenk}
\affil[1]{Wesleyan University, CT, USA}
\affil[2]{Mt.~Allison University, NB, CA} 
\affil[3]{Wellesley College, MA, USA}

\date{\today}

\begin{document}

\maketitle

\begin{abstract}

The discrete-time treatment number  of a graph $H$, denoted by $\tau(H)$,  was introduced in \cite{FirstTreatPaper} and arises from a deterministic process in which each vertex is assigned a color at each time-step.   The  pathwidth upper bound $\tau(H)\leq \lceil\frac{1+pw(H)}{2}\rceil$, is shown in \cite{FirstTreatPaper}, where  $pw(H)$ denotes the pathwidth of  graph $H$.  Equality holds when $H$ is the complete binary tree of depth $d$ (denoted by $BT(d)$) and  $1 \le d \le 6$. In this paper, we characterize the sizes of all subsets of vertices of $BT(d)$ 
whose boundary has $3$ or fewer vertices and use this result to prove that $\tau(BT(d))= 3$ for $8\leq d\leq 10$; in these cases, equality also holds in the pathwidth upper bound. By the hereditary property of the treatment number, all 
larger complete binary trees have treatment number at least $3$. In contrast, we  provide an explicit construction to show that $\tau(BT(7))=2$, while  the pathwidth upper bound only shows $\tau(BT(7))\le 3.$
We construct an infinite family of graphs, each with a cut-vertex, whose treatment number depends on the number of components when the cut-vertex is removed. We use a combination of pathwidth and vertex cuts to prove another upper bound on the treatment number and use this to  construct an infinite family of graphs whose boundary size is limited, but whose treatment number is unlimited. 

\end{abstract}

\section{Introduction} 

The treatment model we consider was introduced in~\cite{FirstTreatPaper}. It is a deterministic discrete-time process on a graph in which at every time-step, each vertex is in one of three states:  green, yellow, or red. At the start, each vertex is red. This definition is related to, but different from the Firefighter Problem~\cite{FM} where a contagion occurs in a limited area and spreads.  Recent papers on the Firefighter Problem include~\cite{Burgess, HE} and both of these models fall within a larger class of graph searching problems.

In \cite{FirstTreatPaper} we provide several interpretations for our model, including the following. Consider a classroom of children where each child is in one of three states: engaged (green), losing focus (yellow), or distracted (red). At the start of the lesson, each child is in the red state, and the goal is for all children to  move to the green state. At each time-step, the teacher ``treats" a subset of the children by meeting with them, and this causes them to enter the green state. When a child in the green state has a distracted (red) neighbor, and is not directly re-engaged by the teacher, then the child loses focus and moves into the yellow state. If a child in the yellow state the child does not meet the teacher at the next time-step, then the child becomes distracted and  enters the red state again. Our goal is to transform every child to the green state, i.e., engaged,  at the end of the process.

 A more general model is introduced in~\cite{FirstTreatPaper}: when a vertex enters the green or yellow state, it remains in that state (regardless of the state of its neighbors) for $r$ or $s$ time-steps, respectively.  In this paper, we consider $r=s=1$.
 The \emph{treatment number} of a graph $H$, denoted by $\tau(H)$, is the minimum $k$ for which it is possible to clear $H$ of all red and yellow vertices while treating at most $k$ vertices per time-step.  We make this more precise in Definition~\ref{treat-def} after we define  treatment protocols.

We state some notation and definitions that are essential for introducing the key concepts   in our paper.  If $H$ is a graph and $S \subseteq V(H)$ then we denote by $\sbar$ the  set $V(H) - S$.  For any vertex $v \in V(H)$, we denote its neighborhood set by $N(v)$ and for any $S \subseteq V(H)$ we denote by $N(S)$ the set $\{N(v): v \in S\}$. 
The \emph{boundary of $S$}, denoted by $\bnd(S)$, is the set  $ N(S) \cap \sbar$. Thus, vertices in $\bnd(S)$ are not in set $S$, but are external neighbors of $S$.
For the tree $T$ in Figure~\ref{fig-depth-2-tree} and the set $S = \{v_1,v_2\}$ we have $\bnd(S) = \{v_3,v_4,v_5\}$.

In  Section~\ref{sec-preliminaries} 
we provide definitions and examples as well as background information about the pathwidth of  complete binary trees.
There are three results  we use from \cite{FirstTreatPaper} that provide bounds on $\tau(H)$.  The first is an upper bound on $\tau(H)$ in terms of pathwidth 
(Theorem~\ref{pathwidth-bnd-thm}), 
which we apply 
to bound the treatment number of complete binary trees.  The second is a more complex result, stated in Corollary~\ref{lower-bnd}, 
whose contrapositive  
gives a lower bound on $\tau(H)$ based on the size of boundary sets.  The third is  Proposition~\ref{subgraph}, which shows that the treatment number of any subgraph of $H$ is at most $\tau(H)$.  
It is shown in \cite{FirstTreatPaper} 
that there is a subdivision of any tree  that requires at most two vertices to be treated per time-step, and, in contrast, an open question of \cite{FirstTreatPaper} is finding a smallest tree $T$ with $\tau(T) > 2$. We answer this question for complete  binary trees in Sections~\ref{sec:binarytree}, \ref{BT-8-sec}, and \ref{sec:depth-seven-tree}.

 Indeed, much of this paper focuses on complete binary trees of depth $d$.  We study these graphs in part because their pathwidths are known and in part because it is surprising that the treatment number for a  complete binary tree is two 
 for $d=3$ and remains  at two even when $d=7$.  However, three vertices must be treated at some time-step  when $d=8$ and to prove this we first characterize all boundary sets of size $1$, $2$, or $3$ for complete binary trees.  In Section~\ref{sec:K4s}
 we prove an additional upper bound  based on a vertex cut and demonstrate one of the limitations of our lower bound theorem by constructing a family of graphs with arbitrarily large treatment number but for which the lower bound theorem does not give useful information.

For any graph theory terms not defined here, please consult a standard reference, such as~\cite{West}.

\section{Preliminaries} 
\label{sec-preliminaries}

In this section we introduce fundamental  definitions, illustrate them with an example, and use pathwidth to provide an upper bound on the treatment number of the complete binary tree of depth $d$.

\subsection{Treatment protocols}

As discussed in the introduction, at each time-step we use colors to distinguish between vertices in different states:  green for engaged,
yellow for losing focus, 
and red for distracted.  
We write $G_t$ for the set of green vertices, $Y_t$ for the set of yellow vertices, and $R_t$ for the set of red vertices at time-step $t$.  Initially (i.e., at time-step $t=0$) all vertices are red, 
so $G_0 = Y_0 = \emptyset$.  For $t\ge 1$, the set of vertices treated at time-step $t$ is denoted by $A_t$ and these newly treated vertices are in $G_t$.  This is the only way a vertex can become green.  The transitions of a vertex from green to yellow or yellow to red are given precisely in Algorithm~\ref{alg-treatment} below and described in words subsequently.

\begin{defn} \rm A \emph{protocol} $J$ for a graph $H$ is a sequence $(A_1, A_2, \ldots, A_N)$ of treatment sets, where $A_i \subseteq V(H)$ for $1 \le i \le N$ and $A_i$ is the set of vertices treated during time-step $i$.  The \emph{width} of $J$, denoted $width(J)$, is the maximum value of $|A_i|$ for $1 \le i \le N$.  
\end{defn}
 
\begin{algorithm}[htbp] \small 
\caption{treatment}\label{alg-treatment}
\begin{algorithmic}

\State {\bf Input: } A graph $H$ and a protocol $J= (A_1, A_2, \ldots, A_N)$ for $H$.

\State {\bf Output:} Reports  success if all vertices are green at time-step $N$ and failure otherwise.

\State
{\bf Initialize:}  $R_0 = V(H)$,   $G_0 = Y_0 = \emptyset$ (All vertices are initially red)

\For  {$t := 1 $ to $ N$}

\State $ R_t := (R_{t-1} \cup Y_{t-1})  - A_t$.

\State   $Y_t := \{v \in G_{t-1} | \, \exists x \in N(v) $ with $ x \in R_t \} - A_t$ 
 
\State  $G_t := A_t \cup \{v \in G_{t-1} | \, \forall x \in N(v)$ we have  $ x \not\in R_t \}$
 
\EndFor

\If{$G_N = V(H)$,} 
\State {report  ``success."  } 
\Else { report  ``failure."}
 \EndIf
\end{algorithmic}
\end{algorithm}

The three steps of the  loop in the algorithm can be expressed in terms of membership in the three states as follows.  Vertices that are    red or yellow  at time-step $t-1$ will be or become  red at time-step $t$ unless they are treated.  The vertices that are  yellow at time-step $t$ are those that were green at time-step $t-1$, not treated at time-step $t$, and have a neighbor that is red at time-step $t$.  The green vertices at time-step $t$ are those that are newly treated, together with those that were green in the previous time-step and have no neighbors that are red at time-step $t$.\\

  \begin{defn} \rm A protocol $J $ {\it clears} graph $H$, if Algorithm~\ref{alg-treatment} reports ``success" when the input is  graph $H$ and protocol $J$.  
   \label{treated-sets}
 \end{defn}

\begin{defn} \rm 
 The \emph{treatment number} of a graph $H$, denoted by $\tau(H)$, is the smallest width of a protocol that clears $H$.
 \label{treat-def}
 \end{defn}

\begin{figure}[hb]
\begin{center}
\begin{tikzpicture}[scale=.85]
\draw[thick] (-.3,0)--(.5,1)--(1,0); 
\draw[thick] (2,0)--(2.5,1)--(3.3,0); 
\draw[thick] (.5,1)--(1.5,1.8)--(2.5,1); 

\draw[thick] (5.7,0)--(6.5,1)--(7,0); 
\draw[thick] (8,0)--(8.5,1)--(9.3,0); 
\draw[thick] (6.5,1)--(7.5,1.8)--(8.5,1); 

\filldraw[black]
(1.5,1.8) circle [radius=3pt]
(2.5,1) circle [radius=3pt]
(.5,1) circle [radius=3pt]
(3.3,0) circle [radius=3pt]
(2,0) circle [radius=3pt]
(1,0)circle [radius=3pt]
(-.3,0)circle [radius=3pt]
;

\filldraw[black]
(7.5,1.8) circle [radius=3pt]
(8.5,1) circle [radius=3pt]
(6.5,1) circle [radius=3pt]
(9.3,0) circle [radius=3pt]
(8,0) circle [radius=3pt]
(7,0)circle [radius=3pt]
(5.7,0)circle [radius=3pt]
;

\node(ell2) at (1.97,2.2) {$v_1$ {\color{green!65!black} $[1,3]$}};
\node(ell1) at (-.2,1.2) {{\color{green!65!black} $[1]$} $v_2$};
\node(ell1) at (3.1,1.2) {$v_5$ {\color{green!65!black} $[3]$}};

\node(ell0) at (-.6,-.4) {{\color{green!65!black} $[2]$} $v_3$};
\node(ell0) at (.8,-.4) {{\color{green!65!black} $[2]$} $v_4$};
\node(ell0) at (2.3,-.4) {$v_6$ {\color{green!65!black} $[4]$}};
\node(ell0) at (3.65,-.4) {$v_7$ {\color{green!65!black} $[4]$}};

\node(ell2) at (7.7,2.2) {$v_1$ {\color{green!65!black} $[6]$}};
\node(ell1) at (5.4,1.2) {{\color{green!65!black} $[1,3,5]$} $v_2$};
\node(ell1) at (9.35,1.2) {$v_5$ {\color{green!65!black} $[7,9]$}};

\node(ell0) at (5.4,-.4) {{\color{green!65!black} $[2]$} $v_3$};
\node(ell0) at (6.8,-.4) {{\color{green!65!black} $[4]$} $v_4$};
\node(ell0) at (8.3,-.4) {$v_6$ {\color{green!65!black} $[8]$}};
\node(ell0) at (9.7,-.4) {$v_7$ {\color{green!65!black} $[10]$}};

\end{tikzpicture}
\end{center}

\caption{  
Two protocols  that clear  $BT(2)$,   with  the times that each vertex is treated in brackets (see  Example~\ref{depth-2-tree-example}).}

\label{fig-depth-2-tree}
\end{figure} 

Binary trees play a fundamental role in this paper.  
The \emph{complete binary tree of depth $d$} is the rooted tree in which every non-leaf vertex has exactly two children and every leaf is exactly distance $d$ from the root.  We  write $BT(d)$ to denote the complete binary tree of depth $d$.  Figure~\ref{fig-depth-2-tree}  shows the tree $BT(2)$ with  vertex set $ \{v_1,v_2,v_3,v_4,v_5,v_6,v_7\}$.

\begin{exa}\label{depth-2-tree-example} {\rm
 The treatment sets
$A_1 = \{v_1, v_2\}$, $A_2 = \{v_3, v_4\}$, $A_3 = \{v_1, v_5\}$, $A_4 = \{v_6, v_7\}$, illustrated on the left of  Figure~\ref{fig-depth-2-tree}, form a width $2$ protocol $ J=(A_1,A_2,A_3,A_4)$ for the graph.   Table~\ref{Alg-implement-table} shows the values of  of $R_t$, $Y_t$, and $G_t$ when  protocol $J$  is applied to $BT(2)$.
 Since  all vertices are  green at time-step 4,   we conclude that $J$ clears tree $BT(2)$, and consequently, its treatment number is at most $2$.  In fact,  $
BT(2)$ is a caterpillar, so its treatment number is $1$, as shown in \cite{FirstTreatPaper}. The protocol shown on the right in Figure~\ref{fig-depth-2-tree} has width 1 and 
 there are additional protocols of width $1$ that clear $BT(2)$ in  even fewer time-steps. }

\end{exa}

\begin{table}[ht] 
\small 
\centering 
\begin{tabular}{| l | l | l | c | l |}\hline
$t$ & $A_t$ &  $R_t$ & $Y_t$  & $G_t$\\  \hline 
$0$ &  & $\{v_1,v_2, v_3, v_4, v_5, v_6, v_7\}$ & $\emptyset$ &  $\emptyset$ \\ \hline
$1$ & $\{v_1, v_2\}$  & $\{ v_3, v_4, v_5, v_6, v_7\}$ & $\emptyset$ &  $ \{v_1, v_2\}$ \\ \hline
$2$ & $\{v_3, v_4\}$ & $\{  v_5, v_6, v_7\}$ & $\{v_1\}$  &  $ \{ v_2,v_3,v_4\}$ \\ \hline
$3$ & $\{v_1, v_5\}$ & $\{  v_6, v_7\}$ & $\emptyset $  &  $ \{ v_1, v_2,v_3,v_4, v_5\}$ \\ \hline
$4$ & $\{v_6, v_7\}$ & $\emptyset$ &$\emptyset$  &  $ \{ v_1, v_2,v_3,v_4, v_5, v_6, v_7\}$ \\ \hline
\end{tabular}
\caption{   The values of $R_t$, $Y_t$, and $G_t$  when  protocol $(A_1, A_2, A_3, A_4)$ of Example~\ref{depth-2-tree-example}
is applied to $BT(2)$.}
\label{Alg-implement-table}
\end{table}

\subsection{Pathwidth of binary trees}
We begin by stating the definitions of a path decomposition and the pathwidth of a graph.

\begin{defn} \rm
A \emph{path decomposition} of a graph $H$ is a sequence $(B_1,\dots,B_m)$ where each $B_i$ is a subset of $V(H)$ such that (i) if $xy \in E(H)$ then there exists an $i$, for which $x,y \in B_i$ and (ii)
for all $w \in V(H)$, the set of $B_i$ containing $w$ forms a consecutive subsequence; that is, if $w \in B_i$ and $w \in B_k$ and $i \leq j \leq k$ then $w \in B_j$. The subsets $B_1,B_2,\dots,B_m$ are often referred to as ``bags'' of vertices.

The \emph{width} of a path decomposition $(B_1,\dots,B_m)$ is $\max_{1 \leq i \leq m} |B_i|-1$ and the \emph{pathwidth} of a graph $H$, denoted $pw(H)$, is the minimum width taken over all path decompositions of $H$.
\label{def:pathwidth}
\end{defn}

The tree $BT(2)$ shown in Figure~\ref{fig-depth-2-tree}  has pathwidth $1$, and this is achieved using the path decomposition $(B_1, B_2, \ldots, B_6)$
where $B_1 = \{v_2,v_3\}$, 
$B_2 = \{v_2,v_4\}$, 
$B_3 = \{v_1,v_2\}$, 
$B_4 = \{v_1,v_5\}$, 
$B_5 = \{v_5,v_6\}$, 
$B_6 = \{v_5,v_7\}$. 
An upper bound for the treatment number of a graph in terms of its pathwidth is given in 
\cite[Theorem~3.4]{FirstTreatPaper} and we restate it here in the case $r = s = 1$.

\begin{thm}[\cite{FirstTreatPaper}] \label{thm3.4}  If $H$ is a graph, then $\tau(H) \leq \left \lceil \frac{1+pw(H)}{2} \right \rceil$.
\label{pathwidth-bnd-thm}
\end{thm}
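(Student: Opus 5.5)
The plan is to turn a path decomposition of minimum width into a treatment protocol that sweeps the bags from left to right. Set $k = pw(H)+1$, so that some path decomposition has every bag of size at most $k$. First I will pass to a \emph{nice} path decomposition $(B_1,\dots,B_m)$ of the same width. This means $B_1=B_m=\emptyset$ and consecutive bags differ by exactly one vertex, which is either introduced or forgotten. Obtaining this is standard: insert intermediate bags between consecutive bags, and both conditions (i) and (ii) of Definition~\ref{def:pathwidth} are preserved. The goal is then a protocol of width $\lceil k/2\rceil$.

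The key observation is a timing property of Algorithm~\ref{alg-treatment}. Suppose a vertex is treated at time-step $t$ and again at time-step $t+2$. Then it is never red during $[t,t+2]$: at $t+1$ it is at worst yellow, and the treatment at $t+2$ rescues it. Also, a green vertex none of whose neighbors is ever red again stays green forever. So the idea is to keep the vertices of the current bag "alive" by treating each of them every other time-step.

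Concretely, I maintain a partition of the current bag into two classes $X_0,X_1$, each of size at most $\lceil k/2\rceil$. At time-step $t$ the protocol treats exactly $X_{t \bmod 2}$, so the width is at most $\lceil k/2\rceil$.

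Introducing a vertex $v$:
\begin{itemize}
\item The new bag has at most $k$ vertices, so before adding $v$ one of the two classes has fewer than $\lceil k/2\rceil$ elements.
\item I put $v$ into such a class $X_j$ at the next time-step whose parity is $j$, waiting one idle alternation if necessary. Waiting is harmless, since every bag vertex is still treated every two steps.
\item Thus $v$ is first treated at the moment it joins the bag.
\end{itemize}

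Forgetting a vertex $w\in X_j$:
\begin{itemize}
\item I delay the removal until immediately after a time-step of parity $j$, so $w$ has just been treated and is green.
\item By the decomposition properties, every neighbor of $w$ appears in a bag together with $w$. Hence every neighbor has already been introduced, and consecutiveness guarantees no neighbor is introduced later.
\item Every introduced vertex is never red afterwards: it is either in the bag and treated every two steps, or it was forgotten while green and inductively stays green. So $w$ has no red neighbor from then on and stays green.
\end{itemize}

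The invariant to verify by induction over time-steps is this: every vertex of the current bag was treated in one of the last two time-steps, every forgotten vertex is green, and the only red vertices are those not yet introduced. At the end the bag $B_m$ is empty, so every vertex has been forgotten and is green; equivalently, I can finish with one treatment of each remaining class. Hence the protocol clears $H$, and $\tau(H)\le\lceil (1+pw(H))/2\rceil$.

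The main obstacle is the bookkeeping in the induction, namely the yellow/red transitions at the instants of introduction and forgetting. Two points need care. First, a newly treated vertex may have red neighbors, namely not-yet-introduced vertices; this is fine because it is re-treated two steps later. Second, a forgotten vertex must be green rather than yellow when it leaves, which is exactly why forgetting is synchronized with its class's parity. I would check these two transitions carefully against the three update rules of Algorithm~\ref{alg-treatment}.
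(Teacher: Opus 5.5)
Your proposal is correct and is essentially the argument behind the cited result, which this paper reuses in Proposition~\ref{prop:upperbd-all}. You sweep a path decomposition while treating the two halves of the current bag on alternate time-steps, so every active vertex is re-treated every other step; forgotten vertices stay green because all their neighbors were introduced before they leave the bag. Your nice decomposition with persistent parity classes, together with the simultaneous induction over time-steps, is a clean way to make the \emph{alternate halves} bookkeeping rigorous.
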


Stated in~\cite{Bo98} and attributed to~\cite{Scheffler}, the pathwidth of $BT(d)$ is $\left \lceil \frac{d}{2} \right \rceil$.  Combining these two results yields the next corollary.

\begin{cor}\label{cor:perfbin}  The complete binary tree $BT(d)$ satisfies $\tau(BT(d)) \leq \Big\lceil \frac{1+\left \lceil \frac{d}{2}\right \rceil}{2}\Big\rceil$.
\end{cor}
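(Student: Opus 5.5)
This corollary follows by substituting one known value into Theorem~\ref{pathwidth-bnd-thm}. That theorem gives $\tau(H) \le \lceil \frac{1+pw(H)}{2}\rceil$ for every graph $H$. We apply it with $H = BT(d)$ and insert the pathwidth value $pw(BT(d)) = \lceil \frac{d}{2}\rceil$, which is stated in~\cite{Bo98} and attributed to~\cite{Scheffler}.

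The steps, in order:
\begin{enumerate}
\item Fix $d \ge 1$. For the degenerate case $d=0$, $BT(0)$ is a single vertex, so $pw=0$ and the bound reads $\tau \le 1$, which also holds.
\item Cite the pathwidth formula $pw(BT(d)) = \lceil d/2 \rceil$ from the literature. If desired, sanity-check it against the decomposition of $BT(2)$ given above, which has width $1 = \lceil 2/2 \rceil$.
\item Substitute this value into Theorem~\ref{pathwidth-bnd-thm}. The function $x \mapsto \lceil (1+x)/2\rceil$ is monotone nondecreasing in $x$, so an upper bound $pw(BT(d)) \le \lceil d/2\rceil$ already suffices; the exact value is not needed.
\end{enumerate}
This gives $\tau(BT(d)) \le \Big\lceil \frac{1+\lceil d/2\rceil}{2}\Big\rceil$.

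There is no genuine obstacle here, since all the content lives in Theorem~\ref{pathwidth-bnd-thm} and in the cited pathwidth computation. If a self-contained justification of the upper bound $pw(BT(d)) \le \lceil d/2 \rceil$ were required, I would proceed by induction on $d$. In $BT(d)$ with root $r$, list the bags of a decomposition of one depth-$(d-1)$ subtree, then bags containing $r$, then a decomposition of the other subtree with $r$ added. This recursion yields $pw(BT(d)) \le pw(BT(d-1)) + 1$, which is too weak on its own.

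The sharper bound comes from going down two levels. Take a spine path from the root, treat the off-spine subtrees of depth $d-2$ recursively, and add at most one spine vertex to each of their bags. This gives $pw(BT(d)) \le pw(BT(d-2)) + 1$, and hence $\lceil d/2 \rceil$. That two-level recursion would be the only step requiring care.
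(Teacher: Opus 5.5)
Your argument is correct and is exactly the paper's: substitute $pw(BT(d))=\lceil d/2\rceil$ (from \cite{Bo98}, attributed to \cite{Scheffler}) into Theorem~\ref{pathwidth-bnd-thm}, and your monotonicity remark rightly notes that only the upper bound on pathwidth is needed. One small fix to your optional self-contained sketch: a spine that \emph{starts} at the root leaves the subtree of the root's other child, of depth $d-1$, off the spine, so the recursion as you state it only gives the weak bound. Instead take the spine to be the path $a\,r\,b$ through the root $r$ and its two children $a,b$ (or any leaf-to-leaf path through $r$); every component off this spine then has depth at most $d-2$, which yields $pw(BT(d))\le pw(BT(d-2))+1$.
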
 

We conclude this section by applying Corollary~\ref{cor:perfbin} for values of $d$ with   $d\le 10$ and  previewing results of Sections~\ref{sec:binarytree} and \ref{sec:depth-seven-tree}
that enable us to find  the exact value of $BT(d)$ for all $d$ with $d \le 10$.   
Example~\ref{depth-2-tree-example}  shows that $\tau(BT(2))= 1$.  
In~\cite[Theorem~4.3]{FirstTreatPaper}, it is shown that among graphs  with at least one edge,  the ones with treatment number $1$ are precisely the caterpillars, thus $\tau(BT(d)) \ge 2$ for $d\ge 3$.  Coupling this with Corollary~\ref{cor:perfbin}, we find $\tau(BT(d))=2$  for $d \in \{3,4,5,6\}$ and $\tau(BT(d)) \in \{2,3\}$  for $d \in \{7,8,9,10\}$.

In  Section~\ref{BT-8-sec} we prove that any protocol that clears $BT(8)$ requires three vertices to be treated at  some time-step, and this allows us to show
that $\tau(BT(d)) = 3$ for $d \in \{8,9,10\}$. 
Surprisingly, $\tau(BT(7)) = 2$, which we show in 
Section~\ref{sec:depth-seven-tree} by providing an explicit width $2$ protocol. 

\section{Subsets of $BT(d)$ with boundary size at most 3}
\label{sec:binarytree}

 The upper bound  $\tau(BT(8)) \le 3$ follows from Corollary~\ref{cor:perfbin}, but our proof that there is no width $2$ protocol that clears $BT(8)$ requires more tools.  One of these tools is of independent interest:  characterizing all possible values of $|S|$ when $S$ is a set of vertices in $BT(d)$ with $|\bnd(S)| \le 3$, and describing these sets $S$.  This is presented in the statement and proof of Theorem~\ref{binary-tree-thm}.

We begin by noting that if a connected graph $H$ contains both vertices of  a set $S$ and vertices of $\sbar$ then $H$ contains a vertex in $\bnd(S)$.  We record this  idea in the following where $H$ is  a subtree of $BT(d)$ rooted at a particular vertex.

\begin{obs}
 Suppose  $S$ is a set of vertices in $BT(d)$,  $v$ is a vertex of $BT(d)$, and $T_v$ is the subtree of $BT(d)$ rooted at $v$.   If no vertex of $T_v$ is  in $\bnd(S)$ then either all vertices of $T_v$ are in $S$ or all are in $\sbar$. 
 \label{rooted-tree-d}
\end{obs}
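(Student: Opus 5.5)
The plan is to argue by contradiction, using only the definition of $\bnd(S)$ and the fact that $T_v$ is connected. We keep in mind the remark just before the observation: if a connected graph contains vertices of both $S$ and $\sbar$, then it contains a vertex of $\bnd(S)$. The observation is that remark applied to $H = T_v$, a connected subtree of $BT(d)$. One caveat is that $\bnd(S)$ is computed in all of $BT(d)$, not in $T_v$. So we only need edges of $T_v$ to be edges of $BT(d)$, which holds because $T_v$ is a subgraph.

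Concretely, suppose no vertex of $T_v$ lies in $\bnd(S)$, and suppose $T_v$ contains a vertex $x\in S$ and a vertex $y\in\sbar$. Since $T_v$ is a tree, take the unique $x$--$y$ path $x=u_0,u_1,\dots,u_k=y$ inside $T_v$. The first vertex starts in $S$ and the last does not, so there is an index $i$ with $u_i\in S$ and $u_{i+1}\in\sbar$. Then $u_{i+1}\in N(u_i)\subseteq N(S)$ and $u_{i+1}\in\sbar$, so $u_{i+1}\in\bnd(S)$. But $u_{i+1}$ is a vertex of $T_v$, contradicting the hypothesis. Hence $T_v$ lies entirely in $S$ or entirely in $\sbar$.

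There is no real obstacle. The only point needing care is the direction of the crossing edge: we must choose the consecutive pair so that the vertex outside $S$ is the one we name, since $\bnd(S)$ lies in $\sbar$. The first index at which the path leaves $S$ gives this automatically.
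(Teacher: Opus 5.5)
Your proof is correct and follows the paper's own reasoning. The paper justifies the observation by the remark just before it: a connected graph containing vertices of both $S$ and $\sbar$ must contain a vertex of $\bnd(S)$, applied to the subtree $T_v$. Your path argument simply writes out that remark in full, including the correct check that boundary vertices found using edges of $T_v$ are boundary vertices in all of $BT(d)$.
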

In Theorem~\ref{binary-tree-thm} we characterize the possible values of $|S|$ where $S$ is a set of vertices in graph $H$ with $|\bnd(S)| \le 3$.  We prove Theorem~\ref{binary-tree-thm} by classifying vertices in $\bnd(S)$ 
into  categories based on Definition~\ref{top-bottom-def} below.

\begin{defn} {\rm Let $r$ be the root of $BT(d)$  and let $S \subseteq V(BT(d))$.  

\noindent
(i) \ If  $r \in \sbar$ then a vertex $v$ is called a \emph{top vertex} if the path from $v$ to $r$ contains no vertices in $S$ and one or both children of $v$ are in $S$.

\noindent
(ii) \ If $r \in S$ then a vertex $w$ is called a \emph{bottom vertex} if $w\not\in S$ but all the interior vertices on the path from $w$ to $r$ are in $S$.   
}
\label{top-bottom-def}
\end{defn}

Figure~\ref{2trees-fig} illustrates Definition~\ref{top-bottom-def}.  In each of these rooted trees, the vertices of a  set $S$ are solid, the vertices in $\sbar$ are hollow, and the vertices in $\bnd(S)$ are boxed. In the graph on the left, $v_1, v_2$, and $v_3$ are top vertices while in the graph on the right, only $v_1$ and $v_3$ are top vertices.  There are no bottom vertices in Figure~\ref{2trees-fig} since $r \in \sbar$  in both graphs. The next lemma provides some facts that will be used in the proof of Theorem~\ref{binary-tree-thm}.

\begin{figure}[htb] 
\begin{center}
\begin{tikzpicture}[scale=.525,squarednode/.style={rectangle, draw=orange!80!black, fill=red!10!white, very thick, minimum size=4pt}]

\draw[thick](3.5,2.7)-- (7.5,3.6)--(11.5,2.7);

\draw[thick] (9.5,1.8)--(11.5,2.7)--(13.5,1.8);

\draw[thick] (12,0)--(12.5,1)--(13,0); 
\draw[thick] (14,0)--(14.5,1)--(15,0); 
\draw[thick] (12.5,1)--(13.5,1.8)--(14.5,1); 

\draw[thick] (8,0)--(8.5,1)--(9,0); 
\draw[thick] (10,0)--(10.5,1)--(11,0); 
\draw[thick] (8.5,1)--(9.5,1.8)--(10.5,1); 

\draw[thick] (1.5,1.8)--(3.5,2.7)--(5.5,1.8);

\draw[thick] (4,0)--(4.5,1)--(5,0); 
\draw[thick] (6,0)--(6.5,1)--(7,0); 
\draw[thick] (4.5,1)--(5.5,1.8)--(6.5,1); 

\draw[thick] (0,0)--(.5,1)--(1,0); 
\draw[thick] (2,0)--(2.5,1)--(3,0); 
\draw[thick] (.5,1)--(1.5,1.8)--(2.5,1); 

\node(r) at (7.5,4) {$r$};
\node(v) at (3.4,3.4){$v_1$};
\node(v) at (1.5,2.45){$v_2$};
\node(v) at (13.5,2.45){$v_3$};
\node[squarednode] at (3.5,2.7) {};
\node[squarednode] at (1.5,1.8) {};
\node[squarednode] at (13.5,1.8) {};

\filldraw[color=black, fill=white]
(7.5,3.6) circle [radius=4pt]
(11.5,2.7) circle [radius=4pt];

\draw[black]
(13.5,1.8) circle [radius=4pt];

\filldraw[blue!65!black]
(14.5,1) circle [radius=4pt]
(12.5,1) circle [radius=4pt]
(15,0) circle [radius=4pt]
(14,0) circle [radius=4pt]
(13,0)circle [radius=4pt]
(12,0)circle [radius=4pt];
\filldraw[color=black, fill=white]
(9.5,1.8) circle [radius=4pt]
(10.5,1) circle [radius=4pt]
(8.5,1) circle [radius=4pt]
(11,0) circle [radius=4pt]
(10,0) circle [radius=4pt]
(9,0)circle [radius=4pt]
(8,0)circle [radius=4pt];
\draw[black]
(3.5,2.7) circle [radius=4pt];

\filldraw[blue!65!black]
(5.5,1.8) circle [radius=4pt]
(6.5,1) circle [radius=4pt]
(4.5,1) circle [radius=4pt]
(7,0) circle [radius=4pt]
(6,0) circle [radius=4pt]
(5,0)circle [radius=4pt]
(4,0)circle [radius=4pt];
\draw[black]
(1.5,1.8) circle [radius=4pt];
\filldraw[color=black, fill=white]
(2.5,1) circle [radius=4pt]
(3,0) circle [radius=4pt]
(2,0) circle [radius=4pt];
\filldraw[blue!65!black]
(.5,1) circle [radius=4pt]
(1,0)circle [radius=4pt]
(0,0)circle [radius=4pt]
;


\draw[thick](19.5,2.7)-- (23.5,3.6)--(27.5,2.7);

\draw[thick] (25.5,1.8)--(27.5,2.7)--(29.5,1.8);

\draw[thick] (28,0)--(28.5,1)--(29,0); 
\draw[thick] (30,0)--(30.5,1)--(31,0); 
\draw[thick] (28.5,1)--(29.5,1.8)--(30.5,1); 

\draw[thick] (24,0)--(24.5,1)--(25,0); 
\draw[thick] (26,0)--(26.5,1)--(27,0); 
\draw[thick] (24.5,1)--(25.5,1.8)--(26.5,1); 

\draw[thick] (17.5,1.8)--(19.5,2.7)--(21.5,1.8);

\draw[thick] (20,0)--(20.5,1)--(21,0); 
\draw[thick] (22,0)--(22.5,1)--(23,0); 
\draw[thick] (20.5,1)--(21.5,1.8)--(22.5,1); 

\draw[thick] (16,0)--(16.5,1)--(17,0); 
\draw[thick] (18,0)--(18.5,1)--(19,0); 
\draw[thick] (16.5,1)--(17.5,1.8)--(18.5,1); 

\node(r) at (23.5,4) {$r$};
\node(v) at (19.4,3.4){$v_1$};
\node(v) at (16.3,1.65){$v_2$};
\node(v) at (29.5,2.45){$v_3$};
\node[squarednode] at (19.5,2.7) {};
\node[squarednode] at (16.5,1) {};
\node[squarednode] at (29.5,1.8) {};

\filldraw[color=black, fill=white]
(23.5,3.6) circle [radius=4pt]
(27.5,2.7) circle [radius=4pt];

\draw[black]
(29.5,1.8) circle [radius=4pt];

\filldraw[blue!65!black]
(30.5,1) circle [radius=4pt]
(28.5,1) circle [radius=4pt]
(31,0) circle [radius=4pt]
(30,0) circle [radius=4pt]
(29,0)circle [radius=4pt]
(28,0)circle [radius=4pt];

\draw[black] (19.5,2.7) circle [radius=4pt];
\filldraw[color=black, fill=white]
(25.5,1.8) circle [radius=4pt]
(26.5,1) circle [radius=4pt]
(24.5,1) circle [radius=4pt]
(27,0) circle [radius=4pt]
(26,0) circle [radius=4pt]
(25,0)circle [radius=4pt]
(24,0)circle [radius=4pt]

(21.5,1.8) circle [radius=4pt]

(22.5,1) circle [radius=4pt]
(20.5,1) circle [radius=4pt]
(23,0) circle [radius=4pt]
(22,0) circle [radius=4pt]
(21,0)circle [radius=4pt]
(20,0)circle [radius=4pt];
\filldraw[blue!65!black]
(17.5,1.8) circle [radius=4pt];
\filldraw[color=black, fill=white]
(18.5,1) circle [radius=4pt]
(19,0) circle [radius=4pt]
(18,0) circle [radius=4pt]
(17,0)circle [radius=4pt]
(16,0)circle [radius=4pt]
;
\draw[black]
(16.5,1) circle [radius=4pt];

\end{tikzpicture}
\end{center}

\caption{Two trees illustrating Cases 1 and 2  of the proof of Theorem~\ref{binary-tree-thm}.  The vertices in set $S$ are solid, and in both trees,  $\bnd(S) = \{v_1,v_2,v_3\}$.}  

\label{2trees-fig} 
\end{figure}

\begin{lemma}
 Let $r$ be the root of $BT(d)$  and   let $S \subseteq V(BT(d))$. 

{\rm (a)} Top vertices and bottom vertices are in $\bnd(S)$. 

{\rm (b)} If $r \in \sbar$ and $v$ is a non-top vertex in $\bnd(S)$ then there exists a top vertex on the path from $v$ to $r$.

{\rm (c)} If $r \in \sbar$ then each vertex $s \in S$ is associated with a unique top vertex, namely the first top vertex on the path from $s$ to $r$.

{\rm (d)} If $r \in S$ and $v$ is a non-bottom vertex in $\bnd(S)$ then there exists a bottom vertex on the path from $v$ to $r$.

\label{tree-lem}
\end{lemma}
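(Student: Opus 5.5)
Each of the four parts follows directly from the definitions of top vertex, bottom vertex and boundary. The one fact we use repeatedly is that in a rooted tree the path from a vertex $v$ to $r$ is unique. Its vertices are $v$, then the parent of $v$, and so on up to $r$.

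For (a), let $v$ be a top vertex. Since $v$ lies on its own path to $r$, we have $v\notin S$, and by definition some child of $v$ is in $S$. Hence $v\in N(S)\cap\sbar=\bnd(S)$. Let $w$ be a bottom vertex. Then $w\notin S$, and $w\neq r$ because $r\in S$. The parent of $w$ is either $r$ or an interior vertex of the path from $w$ to $r$, so in both cases it lies in $S$. Therefore $w\in\bnd(S)$.

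For (b), suppose $r\in\sbar$ and $v\in\bnd(S)$ is not a top vertex. Let $P$ be the path from $v$ to $r$, and consider the vertices of $P$ that lie in $S$.
\begin{itemize}
\item If $P$ contains no vertex of $S$, then every vertex of $P$ is in $\sbar$. Since $v$ is not a top vertex, no child of $v$ is in $S$. But $v\in\bnd(S)$ has a neighbour in $S$, and its only other neighbour is its parent, which lies on $P$. This is a contradiction.
\item So $P$ contains a vertex of $S$. Let $s$ be the one closest to $r$; note $s\neq r$. Let $u$ be the parent of $s$. Then $u$ is on $P$, and the subpath from $u$ to $r$ avoids $S$ by the choice of $s$. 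Also $u$ has the child $s\in S$. Hence $u$ is a top vertex on the path from $v$ to $r$.
\end{itemize}

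For (c), take $s\in S$ with $r\in\sbar$. Running the argument of the second bullet of (b) with $s$ in place of $v$ shows that a top vertex exists on the path from $s$ to $r$. Such a vertex is necessarily an ancestor of $s$, since $s$ itself is in $S$ and so is not a top vertex. The vertices on this path are linearly ordered by distance from $s$, so there is a first top vertex on it. This first top vertex is uniquely determined by $s$, which gives the association.

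For (d), suppose $r\in S$ and $v\in\bnd(S)$ is not a bottom vertex. Walk from $r$ toward $v$ along the path between them, and let $w$ be the first vertex on this walk that lies in $\sbar$. This vertex exists because $v$ itself is in $\sbar$. Every vertex strictly between $r$ and $w$ is in $S$, so $w$ is a bottom vertex, and $w$ lies on the path from $v$ to $r$. Since $v$ is not a bottom vertex, we get $w\neq v$.

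The only point needing care is the edge case in (b): a boundary vertex whose $S$-neighbour could only be its parent. This is handled by the first bullet, since in that situation the parent lies on $P$ and is therefore in $\sbar$.
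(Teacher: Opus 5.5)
Your proof is correct and follows the paper's argument essentially part by part: for (b) and (c) you take the parent of the $S$-vertex on the path closest to $r$, and for (d) the $\sbar$-vertex closest to $r$. Your first bullet in (b) is a useful addition, since the paper's step ``$v$ is not a top vertex, so the $vr$-path contains a vertex of $S$'' silently relies on $v\in\bnd(S)$ exactly as you spell out. In (c) the paper also records that this first top vertex is precisely the parent of the $S$-vertex closest to $r$ (nothing at or below that $S$-vertex can be top), and that is the form used later in Case~1 of Theorem~\ref{binary-tree-thm}, so it is worth stating too.
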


\begin{proof}
(a)  Recall that $\bnd(S) = N(S) \cap \sbar$.  First suppose that $v$ is a top vertex and apply Definition~\ref{top-bottom-def}.  The path from $v$ to $r$ consists entirely of vertices in $\sbar$, so $v \in \sbar$.  Furthermore, at least one child of $v$ is in $S$, so $v \in N(S)$.  Thus $v \in  N(S) \cap \sbar$ as desired.

Now suppose $w$ is a bottom vertex.  Again applying Definition~\ref{top-bottom-def} we get $w \in \sbar$ and the neighbor of $w$ on the $wr$-path is in $S$, so $w \in  N(S) \cap \sbar$.

(b)  Since $v$ is not a top vertex, there exists a vertex of $S$ on the $vr$-path.  The parent of the last $S$ vertex on this path will be a top vertex  on the  $vr$-path.

(c) 
 Since $r \not\in S$,  for each $s \in S$, there must be at least one top vertex on the path from $s$ to $r$. The top vertex associated with $s$ is the parent of the last $S$ vertex on the path from $s$ to $r$, and this is also the first top vertex on the path from $s$ to $r$.
 
 (d) Since $v$ is not a bottom vertex, there exists a vertex of $\sbar$ on the $vr$-path.  The last $\sbar$ vertex on this path will be a bottom vertex on the  $vr$-path.
\end{proof}

By characterizing vertex sets in $BT(d)$ that have boundary size at most $3$, we also characterize the  possible sizes of these sets in the next theorem.

\begin{thm}
If $S$ is a set of vertices of $BT(d)$ with $|\bnd(S)| \le 3$, then  $|S|$ satisfies one of the equations in Table~\ref{equns-table}. 
\label{binary-tree-thm}
\end{thm}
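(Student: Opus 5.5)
We will prove Theorem~\ref{binary-tree-thm} by splitting into two cases according to whether the root $r$ lies in $\sbar$ (Case 1) or in $S$ (Case 2). In each case we classify the vertices of $\bnd(S)$ as top/bottom vertices or not, using Lemma~\ref{tree-lem}. We then express $|S|$ as a sum of sizes of full subtrees. A full subtree rooted at depth $i$ has $2^{d-i+1}-1$ vertices. The resulting expressions are exactly the kinds of equations collected in Table~\ref{equns-table}.

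\textbf{Case 1: $r\in\sbar$.}
\begin{itemize}
\item By Lemma~\ref{tree-lem}(a) every top vertex lies in $\bnd(S)$, so there are at most three top vertices. By Lemma~\ref{tree-lem}(c), $S$ is partitioned according to the associated top vertex.
\item Fix a top vertex $v$ at depth $i$. The part of $S$ associated with $v$ lies inside the subtrees rooted at the children of $v$ that are in $S$.
\item If no other boundary vertex lies below $v$, Observation~\ref{rooted-tree-d} forces each such child subtree to be entirely in $S$. The contribution is then $2^{d-i}-1$ (one child) or $2^{d-i+1}-2$ (both children).
\item If a non-top boundary vertex $w$ lies below $v$, it removes from $S$ a subtree rooted at $w$. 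This is the situation of the right tree in Figure~\ref{2trees-fig}. The subtree of $w$ may itself contain a deeper top vertex, which recursively contributes a further full subtree inside it.
\end{itemize}
Since there are at most three boundary vertices, the possible configurations are: three top vertices; two top vertices and one non-top; one top vertex and two non-top. Each gives $|S|$ as a signed sum of at most a few terms of the form $2^{a}-1$ or $2^{a}-2$. We enumerate these configurations, tracking ancestor relations between the boundary vertices, and write down the corresponding equation.

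\textbf{Case 2: $r\in S$.}
\begin{itemize}
\item By Lemma~\ref{tree-lem}(a),(d) the bottom vertices lie in $\bnd(S)$, and every other boundary vertex lies below some bottom vertex.
\item If there are no non-bottom boundary vertices, Observation~\ref{rooted-tree-d} shows that $S$ is the whole tree minus the subtrees rooted at the bottom vertices. In addition, every leaf-level path must hit a bottom vertex or stay in $S$.
\item Inside the subtree of a bottom vertex $w$, the remaining boundary vertices behave like a copy of Case 1 applied to the smaller tree $T_w$ with root $w\in\sbar$. We handle this by induction, or by direct reuse of the Case~1 formulas.
\item This gives $|S| = (2^{d+1}-1) - \sum_j (2^{d-i_j+1}-1) + (\text{Case-1 contributions inside the } T_{w_j})$.
\end{itemize}
We must also allow degenerate situations: $S=\emptyset$ or $S=V$ (empty boundary), and bottom vertices that are leaves.

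\textbf{Main obstacle.} The bookkeeping is the hard part: we must ensure the enumeration of how at most three boundary vertices can nest (ancestor chains versus incomparable positions) is exhaustive, and that each case reduces to one of the table's forms. Where the table groups the formulas by the number of boundary vertices, we match each nesting pattern to its row. We also check small-depth edge cases, such as a top vertex whose children are leaves, against the stated ranges of the exponents.
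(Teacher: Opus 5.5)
Your framework matches the paper's: you split on whether $r\in S$, use top and bottom vertices, Lemma~\ref{tree-lem} and Observation~\ref{rooted-tree-d}, and add or subtract subtree sizes. But the proposal stops where the proof begins. The theorem \emph{is} the list of twelve shapes with their ranges of $b$, which Theorem~\ref{thm:depth8} uses quantitatively (e.g.\ $b\ge 3$ in equation~3). ``We enumerate these configurations'' does not establish them.

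Two structural claims you would build on are also false.
\begin{enumerate}
\item In Case~2 with only bottom vertices, Observation~\ref{rooted-tree-d} applied to the child subtrees of a bottom vertex $w$ only makes each child subtree entirely in $S$ or entirely in $\sbar$; it does not put $T_w$ in $\sbar$. In $BT(2)$ with the labels of Figure~\ref{fig-depth-2-tree}, $S=V-\{v_2\}$ has $\bnd(S)=\{v_2\}$ with $v_2$ a bottom vertex, yet $S$ is not $V$ minus the subtree rooted at $v_2$. The $\sbar$-part attached to $w$ is $T_w$, or $w$ with one child subtree, or $\{w\}$ alone, i.e.\ $2^{t}-c$ with $c\in\{0,1\}$. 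That is exactly where the ranges of $b$ in equations 7--9 come from.
\item In Case~1, a non-top boundary vertex $w$ need not remove a whole subtree. Moreover, no top vertex can lie below $w$ at all, since the $wr$-path already contains a vertex of $S$.
\end{enumerate}
Your Case~1 list of configurations also omits $|\bnd(S)|\in\{1,2\}$.

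The more serious problem is the mechanism itself. ``Remove all of $T_w$, then add back Case-1 contributions inside $T_w$'' spends an extra power of $2$ on each such $w$. Take $r\in S$, a single bottom vertex $w$ with $|V(T_w)|=2^{s}-1$, one child subtree of $w$ in $S$, and two further boundary vertices with parents in $\sbar$ in the other child subtree. Plugging equation~3 for $T_w$ into your formula gives $|S|=(2^{d+1}-1)-2^{s}+2^{t_1}+2^{t_2}+2^{t_3}-b'$. That has four powers of $2$, so it is not of any of the twelve shapes. If instead you apply Case~1 only to the ``remaining'' boundary vertices, as you write, the $S$-child subtree of $w$ is not counted at all. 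To recover a valid shape you must notice that $w$ is itself the top vertex of $T_w$ with $t_1=s-1$, so $-(2^{s}-1)+(2^{s-1}-1)=-2^{s-1}$ (similarly $-(2^{s}-1)+(2^{s}-2)=-2+1$ when both children are in $S$).

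The paper avoids this by letting each boundary vertex contribute exactly one signed term, decided by its parent. Process $\bnd(S)$ from the top down, starting from $0$ if $r\in\sbar$ and from $2^{d+1}-1$ if $r\in S$:
\begin{itemize}
\item a boundary vertex whose parent is in $S$ subtracts $2^{t}-c$ with $c\in\{0,1\}$;
\item a boundary vertex whose parent is in $\sbar$ (or which is $r$) adds $2^{t}-d$ with $d\in\{1,2\}$.
\end{itemize}
The topmost term has sign $+$ if $r\in\sbar$ and $-$ if $r\in S$. With at most three terms, the possible sign patterns are exactly those of equations 1--6 and 7--12 respectively, and summing the constants gives the stated ranges of $b$. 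With this rule in place of your recursion, your outline becomes the paper's proof.
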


\begin{proof}
Let $r$ be the root of $BT(d)$. Throughout this proof we assume that $t_1, t_2, t_3$ are positive integers, each at most $d+1$ and that $b$ is an integer. 
If $\bnd(S) = \emptyset$ then either $S = \emptyset$ or $S = V(BT(d))$.   In the first case, $|S|$ satisfies equation 1 of Table~\ref{equns-table} with $t_1 = 1$ and $b=2$  and in the second case $|S|$ satisfies the same equation with $t_1 = d+1$ and $b=1$. 
  Thus we may assume $\bnd(S) \neq \emptyset.$ 
We know that top and bottom vertices are in $\bnd(S)$ by Lemma~\ref{tree-lem}(a), and
our cases  below depend on how many  additional boundary vertices exist.  In the first three cases the root is   in $\sbar$, so by Lemma~\ref{tree-lem}(b), there are one or more top vertices.   In the second three cases, the root is  in $S$, so  by Lemma~\ref{tree-lem}(d), there are one or more bottom vertices.
\medskip

\noindent {\bf Case 1: } 
$r \in \sbar$ and  all vertices in $\bnd(S)$ are top vertices.  

This case is illustrated by the graph on the left of Figure~\ref{2trees-fig}.
By Lemma~\ref{tree-lem}(c),  each vertex $s \in S$ is associated with a unique top vertex, thus we  can count the vertices of $S$ by counting those associated with each top vertex. 
For each top vertex $v_i$, either one or  both of its children are in $S$ by Definition~\ref{top-bottom-def}. If a child $x_i$ is in $S$ then by the hypothesis of this case, there are no boundary vertices of $S$ in the subtree rooted at $x_i$, so by Observation~\ref{rooted-tree-d}, all vertices in this subtree are in $S$.   If both children of $v_i$ are in $S$ then  both full  subtrees below $v_i$ are in $S$ and there are $2^{t_i}-2$ vertices of $S$ associated with $v_i$.  If one child $x_i$ is in $S$ and the other $y_i$ is not,  then the full subtree rooted at  $x_i$ is in $S$ and this contains $2^{t_i}-1$ vertices. Any vertices at or below $y_i$ that are in $S$ are associated with a  top vertex that is different from $v_i$.  Thus  for each top vertex, we get $2^{t_i}-d_i$ vertices of $S$ associated with it, where $d_i \in \{1, 2\}$.  
There are three possibilities depending on whether $\bnd(S)$ contains 1, 2, or 3 vertices, and the size of $S$ in each of these cases is shown   in Table~\ref{equns-table}, equations 1, 2, and 3.
\medskip

\noindent {\bf Case 2: } $r \in \sbar$  and there is exactly one vertex in $\bnd(S)$ that is not a  top vertex. 

This case is illustrated by the graph on the right of Figure~\ref{fig-depth-2-tree}.
Let $v_2$ be the vertex in $\bnd(S)$ that is not a top vertex.  By the definition of boundary we know $v_2 \in \sbar$ and by Lemma~\ref{tree-lem}(b), there exists a top vertex $v_1$  that is the first top vertex encountered on the path from $v_2$ to $r$.   Since $v_2$ is not a top vertex, the $v_1v_2$ path must contain a vertex of $S$.  Indeed, all internal vertices on this path must be in $S$ or there would be another  non-top boundary vertex between $v_1$ and $v_2$, contradicting the hypothesis of this case.

 In our calculation of $|S|$ in case 1, we counted all vertices in the subtree rooted at $v_2$ as belonging to  $S$.  We now modify this calculation since  in this subtree, either (i) only $v_2 \in \sbar$, (ii) $v_2$ and one full  subtree are in $\sbar$ (and the other full  subtree is in $S$), or (iii) $v_2$ and both full  subtrees are in $\sbar$.  In (i) we subtract $1 = 2^{1} -1$, in (ii) we subtract $2^{t_2}$, and in (iii) we subtract $2^{t_2}-1$ from the possible values of $|S|$ given in equations 1 and 2.  There are two possibilities (cases 2a, 2b) depending on whether $\bnd(S)$ contains 2 or 3 vertices.    In case 2a we have  
 $|S| = 2^{t_1} - 2^{t_2}   - b$ for some $b$ with $0 \le b \le 2$.  In case 2b   we have
$|S| = 2^{t_1} - 2^{t_2} + 2^{t_3} - b$  with $1 \le b \le 4$. These appear in equations 4 and 5, respectively.

\begin{table}[ht]

\[\fbox{\parbox{0.8\textwidth}{
\begin{enumerate}
\small 

\item   $|S| = 2^{t_1}    - b$ for some $b$ with $1 \le b \le 2$
\item   $|S| = 2^{t_1} + 2^{t_2}   - b$ for some $b$ with $2 \le b \le 4$
\item    $|S| = 2^{t_1} + 2^{t_2} + 2^{t_3} - b$ for some $b$ with $3 \le b \le 6$

\item $|S| = 2^{t_1} - 2^{t_2}   - b$ for some $b$ with $0 \le b \le 2$
\item  $|S| = 2^{t_1} - 2^{t_2} + 2^{t_3} - b$ for some $b$ with $1 \le b \le 4$

\item  $|S| = 2^{t_1} - 2^{t_2} - 2^{t_3} - b$ for some $b$ with $-1 \le b \le 2$

\item  $|S| = (2^{d+1}-1) - 2^{t_1}     + b$ for some $b$ with $0 \le b \le 1$
\item    $|S| = (2^{d+1} - 1)  - 2^{t_1} - 2^{t_2}   + b$ for some $b$ with $0 \le b \le 2$
\item    $|S| = (2^{d+1} - 1) - 2^{t_1} - 2^{t_2} - 2^{t_3} + b$ for some $b$ with $0 \le b \le 3$

 \item  $|S| = (2^{d+1} - 1)  - 2^{t_1} + 2^{t_2}   - b$ for some $b$ with $0 \le b \le 2$
  \item  $|S| = (2^{d+1} - 1)  - 2^{t_1} + 2^{t_2} - 2^{t_3}   - b$ for some $b$ with $-1 \le b \le 2$

\item  $|S| = (2^{d+1} -1) - 2^{t_1} + 2^{t_2} + 2^{t_3}  - b$ for some   $  b$ with $1 \le b \le 4$. 
 \end{enumerate} }}\]
 
 \caption{Possible values of $|S|$   when $S$ is a set of vertices in $BT(d)$ and $|\bnd(S)| \le 3$.The values $t_1, t_2, t_3$ are positive integers and $b$ is an integer. }
 
 \label{equns-table}
\end{table}

\medskip 

\noindent {\bf Case 3: } $r \in \sbar$ and there are exactly two vertices in $\bnd(S)$ that are not top vertices. 

The root is in $\sbar$ and there are exactly two vertices in $\bnd(S)$ that are not top vertices. 

In this case, $|\bnd(S)| = 3$ and there is exactly one top vertex.  Let $\bnd(S) = \{v_1,v_2,v_3\}$ where $v_1$ is the top vertex.  By Lemma~\ref{tree-lem}(b), we know there is a top vertex above each of $v_2$ and $v_3$.  Since $v_1$ is the only top vertex, we know $v_1$ is on both the $v_2r$-path and the $v_3r$-path.  Without loss of generality, we may assume that $v_3$ is not on the $v_1v_2$-path.

Recall that in case 2a there is one top vertex ($v_1$), and one non-top boundary vertex ($v_2$), and all internal vertices on the $v_1v_2$-path are in $S$.  We modify the calculation of $|S|$ in case 2a by considering the parent of vertex $v_3$.  If  the parent of $v_3$ is in $S$ then the subtree rooted at $v_3$ was counted as being in $S$ in case 2a, so we \emph{subtract} $2^{t_3} - c_3$ where $c_3 \in \{0,1\}$, as these are the possible number of $\sbar$ vertices in the subtree rooted at $v_3$.  The result is $|S| = 2^{t_1} - 2^{t_2} - 2^{t_3} -b $ with $-1 \le b \le 2$.  This appears in equation 6. 

If  the parent of $v_3$ is in $\sbar$ then the subtree rooted at $v_3$ was counted as being in $\sbar$ in case 2a, so we \emph{add} $2^{t_3} - d_3$ where $d_3 \in \{1,2\}$, as these are the possible number of $S$ vertices in the subtree rooted at $v_3$.  The result is $|S| = 2^{t_1} - 2^{t_2} + 2^{t_3} -b $ with $1 \le b \le 4$.  This appears in equation 5.  

\medskip
\noindent {\bf Case 4: }  $r \in S$ and all vertices in $\bnd(S)$ are bottom vertices. 

By Definition~\ref{top-bottom-def}, no bottom vertex is below another, and by the hypothesis of this case, all boundary vertices are bottom vertices.  Thus there are no boundary vertices below a bottom vertex.  Using Observation~\ref{rooted-tree-d},  for each bottom vertex $w_i$, there are three possibilities.  In the first instance, all vertices below $w_i$ are in $\overline{S}$, so there are $2^{t_i}-1$ vertices of $\overline{S}$ associated with $w_i$ (including $w_i$).  In the second instance,  all vertices below $w_i$ are in $S$, so there is $1 = 2^1-1$ vertex of $\overline{S}$ associated with $w_i$ (namely $w_i$).  In the third possibility,  one child and its descendants  are in  $\overline{S}$ while the other child and its descendants are in $S$, so there are $2^{t_i}$ vertices of  $\overline{S}$ associated with $w_i$.    Thus for each bottom vertex $w_i$ there are $2^{t_i} - c_i$ vertices of $\overline{S}$ associated with $w_i$ where $c_i \in \{0,1\}$.   To find the number of vertices in $S$, we subtract this from $|V(T)| = 2^{d+1} -1$.
There are three possibilities (subcases 4a, 4b, 4c) depending on whether $\bnd(S)$ contains 1, 2, or 3 vertices respectively.  In (4a) we have $|\sbar| = 2^{t_1} - b$ for some $b$ with $0 \le b \le 1$, in (4b)  we have $|\sbar| = 2^{t_1} + 2^{t_2} - b$  with $0 \le b \le 2$, and in (4c)  we have $|\sbar| = 2^{t_1} + 2^{t_2} + 2^{t_3}- b$  with $0 \le b \le 3$.  
The size of $S$ in each of these cases is given in equations 7, 8, and 9.

\medskip 

\noindent {\bf Case 5: }  $r \in S$ and there is exactly one vertex in $\bnd(S)$ that is not a  bottom vertex. 

Again by Definition~\ref{top-bottom-def},  no bottom vertex can be below another bottom vertex, and by Lemma~\ref{tree-lem}(d),  any vertex in $\bnd(S)$ that is not a bottom vertex \emph{must} be below a bottom vertex.  Let $w_2$ be the vertex in $\bnd(S)$ that is not a bottom vertex and let $w_1$ be the bottom vertex on the path from $w_2$ to $r$.

First consider case 5a in which the parent of $w_2$ is in $S$.  In cases 4a and 4b, all vertices in the subtree rooted at $w_2$ were counted as belonging to $S$  because all boundary vertices were bottom vertices.  We now modify this calculation by  \emph{subtracting}   the number of  $\sbar$ vertices in this subtree.  Vertex $w_2$ and one, both, or neither of its subtrees are in $\sbar$, so we subtract  $2^{t_2} - c_2$ where $c_2 \in \{0,1\}$.  If $|\bnd(S)| = 2$ then $|S| = (2^{d+1} -1) - 2^{t_1} - 2^{t_2} + b$ with $0 \le b \le 2$.  This appears in equation 8.  If $|\bnd(S)| = 3$ then $|S| = (2^{d+1} -1) - 2^{t_1} - 2^{t_2} - 2^{t_3} + b$ with $0 \le b \le 3$.  This appears in equation 9. 

Now consider  case  5b in which the parent of $w_2$ is in $\sbar$.  In cases 4a and 4b, all vertices in the subtree rooted at $w_2$ were counted as belonging to $\sbar$ again because all boundary vertices were bottom vertices.  We now modify this calculation by  \emph{adding}   the number of  $S$ vertices in this subtree.  One or  both of  the subtrees rooted at $w_2$  are in $S$, so we add  $2^{t_2} - d_2$ where $d_2 \in \{1,2\}$.  If $|bnd(S)| = 2$ then $|S| = (2^{d+1} -1) - 2^{t_1} +2^{t_2} - b$ with $0 \le b \le 2$.  This appears in equation 10.
If $|\bnd(S)| = 3$ then $|S| = (2^{d+1} -1) - 2^{t_1} + 2^{t_2} - 2^{t_3} - b$ with $-1 \le b \le 2$.  This appears equation 11 (with the roles of $t_2$ and $t_3$ reversed).

\medskip 

\noindent {\bf Case 6: }  
 $r \in S$ and there there are exactly two vertices in $\bnd(S)$ that are not bottom vertices. 

In this case, $|\bnd(S)| = 3$ and there is exactly one bottom vertex.  Let $\bnd(S) = \{w_1,w_2,w_3\}$ where $w_1$ is the bottom vertex.  By Lemma~\ref{tree-lem}(d)  we know $w_1$ is on both the $w_2r$-path and the $w_3r$-path.  Without loss of generality we may assume that the $w_1w_2$-path does not contain $w_3$.

First consider the case in which the parent of $w_2$ is in $S$. If the parent of $w_3$ is also in $S$ then the subtree rooted at $w_3$ was counted as being in $S$ in case 5a, so we modify the calculation by \emph{subtracting}  the number of $\sbar$ vertices in this subtree.  Vertex $w_3$ and one, both, or neither of its subtrees are in $\sbar$, so we subtract $2^{t_3} - c_3$ where $c_3 \in \{0,1\}$ and get $|S| = 
  (2^{d+1} -1) - 2^{t_1} - 2^{t_2} - 2^{t_3} + b$ with $0 \le b \le 3$.   This appears in equation 9.  If instead, the parent of $w_3$ is in $\sbar$, then the subtree rooted at $w_3$ was counted as being in $\sbar$ in case 5a, so we modify the calculation by \emph{adding}   $2^{t_3} - d_3$ where $d_3 \in \{1,2\}$, as one or both subtrees of $w_3$ could be in $\sbar$.    The result is   $|S| = 
  (2^{d+1} -1) - 2^{t_1} - 2^{t_2} + 2^{t_3} - b$ with $-1 \le b \le 2$.  This appears in  equation 11 (with the roles of $t_2$ and $t_3$ reversed).  
 
 Now consider the case in which the parent of $w_2$ is in $\sbar$.  If the parent of $w_3$ is   in $S$ then the subtree rooted at $w_3$ was counted as being in $S$ in case 5b, so we modify the calculation by \emph{subtracting}   $2^{t_3} - c_3$ where $c_3 \in \{0,1\}$, as $w_3$ and one or both of its subtrees are in $\sbar$.  The result is   $|S| = 
  (2^{d+1} -1) - 2^{t_1} + 2^{t_2} - 2^{t_3} - b$ with $-1 \le b \le 2$.   This appears in equation 11.  If instead, the parent of $w_3$ is in $\sbar$, then the subtree rooted at $w_3$ was counted as being in $\sbar$ in case 5b, so we modify the calculation by \emph{adding}   $2^{t_3} - d_3$ where $d_3 \in \{1,2\}$, as one or both subtrees of $w_3$ must be in $S$.    The result is   $|S| = 
  (2^{d+1} -1) - 2^{t_1} + 2^{t_2} + 2^{t_3} - b$ with $1 \le b \le 4$.  This is appears in  equation 12.  
\end{proof}

\section{The $\gamma$-bound and the treatment number of $BT(8)$} \label{BT-8-sec}

In this section we introduce the $\gamma$-bound, a lower bound on the treatment number of a graph,  building on a result from \cite{FirstTreatPaper}.  We then use the $\gamma$-bound in our proof that  three vertices must be treated at some time-step 
 to clear the complete binary tree of depth $8$. The $\gamma$-bound appears again in Section~\ref{sec:K4s}
where we show  that this lower bound is not tight by constructing a family of graphs with arbitrarily large treatment number  where each satisfies the $2$-bound.

For a graph $H$ that is cleared by a  width $\gamma$ protocol, the set $S$ in our next definition represents a set of green vertices that remain green at a particular time-step $j$ without being re-treated, and $p$ represents a threshold for the number of green vertices that will be surpassed at the next time-step. 

\begin{defn} \label{gamma} \rm 
We say that a graph $H$ satisfies the \emph{$\gamma$-bound} if for each $p: \gamma + 1 \le p \le |V(H)| -1$ there exists a set $S \subseteq V(H)$ with $p - \gamma+ 1 \le |S| \le p$ so that $|\bnd(S)|  \le  2 \, \gamma -1$. 
\label{gamma-bound-def}
\end{defn}

Definition~\ref{gamma} is similar to the hypotheses of \cite[Theorem~4.4]{FirstTreatPaper}, which states that graphs with treatment number $\gamma$ must satisfy the $\gamma$-bound.  It  follows  directly from Definition~\ref{gamma-bound-def} and \cite[Theorem~4.4]{FirstTreatPaper} that a graph with treatment number at most $\gamma$ satisfies the $\gamma$-bound. We restate this below as Corollary~\ref{lower-bnd}.

\begin{cor}
If  $H$ is a graph with $\tau(H) \le \gamma$, then $H$ satisfies the $\gamma$-bound.  
\label{lower-bnd}
\end{cor}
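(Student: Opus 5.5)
The plan is to derive Corollary~\ref{lower-bnd} directly from \cite[Theorem~4.4]{FirstTreatPaper} by unpacking what a width-$\gamma$ protocol forces at the moment the number of green vertices first exceeds a threshold. Fix a protocol $J=(A_1,\dots,A_N)$ of width at most $\gamma$ that clears $H$, and fix $p$ with $\gamma+1\le p\le |V(H)|-1$. Since $|G_0|=0$ and $|G_N|=|V(H)|>p$, there is a smallest index $j$ with $|G_j|>p$. We have $|G_{j-1}|\le p$.

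The candidate set is $S=G_j - A_j$, the vertices that stayed green from time-step $j-1$ to $j$ without being re-treated. Then $S\subseteq G_{j-1}$, so $|S|\le p$. Also $|S|\ge |G_j|-|A_j|\ge (p+1)-\gamma$, which gives the required range $p-\gamma+1\le |S|\le p$.

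For the boundary, every $v\in S$ stayed green, so by Algorithm~\ref{alg-treatment} no neighbor of $v$ lies in $R_j$. Hence $\bnd(S)\subseteq G_j\cup Y_j$, and indeed $\bnd(S)\subseteq A_j\cup Y_j$, since the green vertices outside $S$ are exactly those in $A_j$. This immediately gives $|\bnd(S)|\le \gamma+|Y_j\cap \bnd(S)|$. To reach $2\gamma-1$ I would control the yellow boundary vertices: a vertex of $Y_j$ must be treated at step $j+1$, or it turns red and its $S$-neighbours become yellow. So I expect to replace $j$ by a well-chosen step, or to shrink or grow $S$ slightly, so that the boundary is charged to at most two treatment sets. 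One of those sets must lose a vertex because the threshold is crossed only once.

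This counting to $2\gamma-1$ is the delicate step, and it is exactly the content of \cite[Theorem~4.4]{FirstTreatPaper}. Since that theorem is available, the proof I would actually write is short. I would quote the theorem, which says that a graph with treatment number $\gamma$ has, for each such $p$, a set $S$ with $p-\gamma+1\le|S|\le p$ and $|\bnd(S)|\le 2\gamma-1$. I would then check that its hypotheses match Definition~\ref{gamma-bound-def} verbatim. Finally, I would observe that a protocol of width $\tau(H)\le\gamma$ is also a protocol of width at most $\gamma$, so the theorem applies with $\gamma$ in place of $\tau(H)$. The main obstacle is this bookkeeping: confirming that the cited theorem is stated for protocols of width at most $\gamma$ rather than exactly $\tau(H)=\gamma$, and that its range of $p$ is the one used here.
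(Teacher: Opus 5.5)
Your proposal is correct and matches the paper, which likewise obtains the corollary by citing \cite[Theorem~4.4]{FirstTreatPaper} and gives no independent argument. Your direct sketch toward $2\gamma-1$ is therefore unnecessary and, as you note, incomplete. The obstacle you flag goes away through the bridge the paper implicitly uses ("follows directly from Definition~\ref{gamma-bound-def}"): the $\gamma'$-bound implies the $\gamma$-bound for every $\gamma\ge\gamma'$, because the range of $p$ only shrinks, $[p-\gamma'+1,p]\subseteq[p-\gamma+1,p]$, and $2\gamma'-1\le 2\gamma-1$. So applying the theorem with $\gamma'=\tau(H)$ suffices, whether it is phrased for exact treatment number or for protocols of width at most $\gamma$.
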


The contrapositive of Corollary~\ref{lower-bnd} provides a lower bound on the treatment number, since a graph $H$ that fails to satisfy the $\gamma$-bound will have $\tau(H) > \gamma$.  
In our proof of the next theorem, 
we show that $BT(8)$ does not satisfy the $2$-bound by showing it is impossible  to surpass $167$ green vertices in $BT(8)$ when only two vertices are treated per time-step.

\begin{thm}\label{thm:depth8} The complete binary tree of depth 8 has treatment number 3. \end{thm}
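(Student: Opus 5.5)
The upper bound comes straight from Corollary~\ref{cor:perfbin}. With $d=8$ we have $\lceil 8/2\rceil = 4$, so $\tau(BT(8)) \le \lceil 5/2\rceil = 3$. All the work is in the lower bound $\tau(BT(8)) > 2$, which I plan to get from the contrapositive of Corollary~\ref{lower-bnd} with $\gamma = 2$.

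To show $BT(8)$ fails the $2$-bound, I need one value of $p$ with $3 \le p \le 510$ for which no set $S \subseteq V(BT(8))$ has $p-1 \le |S| \le p$ and $|\bnd(S)| \le 3$. Following the remark before the theorem, I take $p = 167$. The task then reduces to showing that neither $166$ nor $167$ is the size of a set whose boundary has at most $3$ vertices. By Theorem~\ref{binary-tree-thm}, it suffices to check that neither number satisfies any of the twelve equations of Table~\ref{equns-table}. Here $2^{d+1}-1 = 511$, each $t_i$ is an integer with $1 \le t_i \le 9$, and $b$ lies in the range specified for each equation.

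I would organize the check by binary expansion. We have $166 = 128+32+4+2$ and $167 = 128+32+4+2+1$, and also $511-166 = 345 = 256+64+16+8+1$ and $511-167 = 344 = 256+64+16+8$.
\begin{itemize}
\item Equations 1--6: I need $166+b$ or $167+b$, for the allowed small $b$, to be a signed sum of at most three powers of two with the sign pattern of that equation. The relevant targets are the integers from $165$ to $173$. The first three terms $2^{t_1}\pm 2^{t_2}\pm 2^{t_3}$ are constrained, for instance to $128+32+8 = 168$ or $256-64-32 = 160$ and similar combinations. For each equation I would list the few triples (or pairs, or singletons) that land within $6$ of $166$ or $167$, and check that the required offset $b$ falls outside the allowed range. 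For example, $168-b \in \{166,167\}$ forces $b \in \{1,2\}$, but equation 3 requires $b \ge 3$.
\item Equations 7--12: the same check applies to $511-|S| \in \{344,345\}$. The relevant combinations are near $256+64+16 = 336$, $256+128-32 = 352$, $256+64+32 = 352$, $256+128-16-32$, and so on. Again I would verify that no allowed $b$ closes the gap.
\end{itemize}
Once both sizes are ruled out, no $S$ with $166 \le |S| \le 167$ has $|\bnd(S)| \le 3$. So $BT(8)$ fails the $2$-bound at $p = 167$, and Corollary~\ref{lower-bnd} gives $\tau(BT(8)) \ge 3$.

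The main obstacle is making this case check exhaustive and convincing rather than ad hoc. I would handle it with one parity and magnitude argument per equation. Each signed sum of at most three powers of two has at most three nonzero digits in a signed-binary (NAF-like) representation, while $166$, $167$, $344$ and $345$ each need at least four such digits. It follows that any representation must use the slack $b$ to absorb at least one power of two, $1$, $2$ or $4$, and the table's ranges for $b$ are too narrow to do this in the required direction. Where this general argument is delicate, for instance when $t_i = 1$ contributes a $2$ or when $t_i = t_j$ merges terms, I would fall back to a short explicit enumeration over $t_1 \in \{8,9\}$, since the leading term must be $128$ or $256$ (or $512$ in the signed cases), with the remaining terms determined up to a handful of choices.
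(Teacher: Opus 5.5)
Your proposal matches the paper's proof. The upper bound comes from Corollary~\ref{cor:perfbin}, and the lower bound comes from using Theorem~\ref{binary-tree-thm} to show that no set with $|S|\in\{166,167\}$ has $|\bnd(S)|\le 3$, so $BT(8)$ fails the $2$-bound at $p=167$ and Corollary~\ref{lower-bnd} gives $\tau(BT(8))\ge 3$. The paper does the finite check you defer the same way: it writes $166,\dots,172$ and $340,\dots,348$ in binary, counts $1$'s for the all-plus equations (excluding $168$ from equation~3 since $b\ge 3$), and uses the "at least two gaps between $1$'s" observation for the mixed-sign equations; that check does go through.
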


\begin{proof}
From Corollary~\ref{cor:perfbin}, we know $\tau(BT(8)) \le 3$. Let $r$ be the root of $BT(8)$ and note that $|V(BT(8))| = 2^9 - 1 = 511$.   Suppose for a contradiction that $\tau(BT(8)) \le 2$ and fix a protocol $J$ of width 2 that clears $BT(8)$. 
By Corollary~\ref{lower-bnd},  the tree $BT(8)$ satisfies the $2$-bound, so for each $p$ with $3 \le p \le 510$, there exists a set $S \subseteq V(BT(8))$ with $p-1 \le |S| \le p$ so that $|\bnd(S)| \le 3$. Consequently, for  $p = 167$, there exists $S \subseteq V(BT(8))$ with $|S| \in \{166, 167\}$ and  $|\bnd(S)| \le 3$.  By Theorem~\ref{binary-tree-thm}, such a set $S$ with $|S| \in \{166,167\}$ and $|\bnd(S)|\leq 3$ satisfies one of the equations in Table~\ref{equns-table}.

For equations 1-6, we have $-1\leq b\leq 6$, and  thus $165 \le |S| + b \le 173.$   In each case, $|S| + b$ is the sum  and/or difference  of  powers of $2$ that are even, so $|S| + b \in \{166,168,170, 172\}$.    The binary representations of these integers are shown in Table~\ref{binary-exp}. Each integer, except for 168, has 4 or more $1$'s in its binary representation, and thus  $|S|$ cannot satisfy any of the equations 1-3 when $|S|+b \in \{166,167,172\}$. Now 168 has three $1$'s in its binary representation, but equation 3 is not satisfied when $|S|+b=168$ because $3 \leq b$ in equation 3 and thus we would have $|S|+3 \leq 168$, contradicting $S \in \{166,167\}$.
   
Thus $|S|$ does not satisfy any of equations 1-3 and we next consider equations 4-6.
In the binary representations of $166, 168, 170$,  and $172$, there are at least two gaps between $1$'s in the representation.  As a result, the binary representation of $|S| + b  + 2^{t_2}$ will have at least three $1$'s and the binary representation of $|S| + b  + 2^{t_2} + 2^{t_3}$  will have at least two $1$'s.  Consequently, $|S|$ does not satisfy any of equations 4, 5, 6.

For equations 7-12,  we have $(2^{d+1} -1) - |S| = 511 - |S| \in \{344, 345\}$.    In equations 7-9 we know $0 \le b \le 3$ and thus $344 \le 511-|S|  + b \le 348$.  In these three equations, the quantity $511 - |S| + b$ is the sum of at most three powers of $2$ that are even, so $ 511-|S|  + b \in \{344, 346, 348\}$.  The binary representations of these integers are shown in Table~\ref{binary-exp} and each has at least four powers of $2$ in its binary representation, so $|S|$ does not satisfy any of equations 7, 8, 9.

In equations 10-12, we have $-1 \le b \le 4$ and thus   $340 \le  511 - |S|  - b \le 346$.  Again, the quantity $511 - |S| - b$ must be even and the binary representation of the even integers between $340$ and $346$ are shown in Table~\ref{binary-exp}.  In the binary representations of $340, 342, 344, 346$, there are at least two gaps between $1$'s in each representation.  As a result, the binary representation of $511 - |S| - b  + 2^{t_2}$ will have at least three $1$'s and the binary representation of $511 - |S| - b  + 2^{t_2} + 2^{t_3}$  will have at least two $1$'s.  Consequently, $|S|$ does not satisfy any of equations 10, 11, 12.

\begin{table}[ht] \small 
    \centering
    \begin{tabular}{c|rr||c|rr} \hline 
166  & $128+32+4+2$ & $10100110$ & 340 & $256+64+16+4$ & $101010100$\\
168 & $128+32+8$ & $10101000$ & 342 & $256+64+16+4+2$ & $101010110$\\
170 & $128+32+8+2$ & $10101010 $ & 344 & $256+64+16+8$& $101011000$\\
172 & $128+32+8+4$ & $10101100 $  & 346 & $256+64 + 16+8+2$& $101011010$ \\
&&& 348 & $256+64+16 +8+4$& $101011100$\\ \hline 
    \end{tabular}
    \caption{Binary representations of numbers that arise from Equations 1-12 in Table~\ref{equns-table} 
    when $|S| \in \{166,167\}$.}
    \label{binary-exp}
\end{table}
\end{proof}

We conclude this section by presenting a subgraph result \cite[Corollary 2.10] {FirstTreatPaper} in Proposition~\ref{subgraph}
 and then applying it to determine the treatment number of $BT(9)$ and $BT(10)$.

\begin{prop} [\cite{FirstTreatPaper}] \label{subgraph} If $H'$ is a subgraph of $H$, then $\tau(H')\leq \tau(H)$. 
\end{prop}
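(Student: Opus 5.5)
The statement is Proposition~\ref{subgraph}: if $H'$ is a subgraph of $H$, then $\tau(H')\le\tau(H)$. The idea is to restrict a clearing protocol for $H$ to $H'$ and check, by induction on time, that the restricted protocol is at least as successful on $H'$.

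Let $J=(A_1,\dots,A_N)$ be a protocol of width $\tau(H)$ that clears $H$. Define $J'=(A_1\cap V(H'),\dots,A_N\cap V(H'))$, which is a protocol for $H'$ of width at most $width(J)$. Write $R_t,Y_t,G_t$ for the sets produced by Algorithm~\ref{alg-treatment} on $(H,J)$, and $R'_t,Y'_t,G'_t$ for those produced on $(H',J')$.

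By induction on $t$, I will prove the monotonicity statement
\[
R'_t\subseteq R_t\cap V(H') \quad\text{and}\quad R'_t\cup Y'_t\subseteq (R_t\cup Y_t)\cap V(H').
\]
Equivalently, each vertex of $H'$ is at least as good (green $\ge$ yellow $\ge$ red) in $H'$ as in $H$.

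The base case $t=0$ is trivial, since every vertex is red in both runs.

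For the inductive step, fix $v\in V(H')$.
\begin{itemize}
\item \emph{$v$ red in $H'$ at time $t$.} Then $v\in R'_{t-1}\cup Y'_{t-1}$ and $v\notin A_t$. By the inductive hypothesis $v\in R_{t-1}\cup Y_{t-1}$, so $v\in R_t$.
\item \emph{$v$ yellow in $H'$ at time $t$.} Then $v\in G'_{t-1}$, $v\notin A_t$, and $v$ has a neighbor $x\in R'_t$ in $H'$. By the previous bullet $x\in R_t$, and $x$ is also a neighbor of $v$ in $H$ because $E(H')\subseteq E(H)$. Now either $v\in G_{t-1}$, in which case $v\in Y_t$, or $v\in R_{t-1}\cup Y_{t-1}$, in which case $v\in R_t$. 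In both cases $v\in R_t\cup Y_t$.
\end{itemize}

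The key point in the second bullet is that deleting edges and vertices can only remove red neighbors and never add them. This is the only place the subgraph hypothesis is used, so it is where care is needed, and the rest is bookkeeping.

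At $t=N$ we have $R_N\cup Y_N=\emptyset$, so $R'_N\cup Y'_N=\emptyset$ and $G'_N=V(H')$. Thus $J'$ clears $H'$, and $\tau(H')\le width(J')\le\tau(H)$.
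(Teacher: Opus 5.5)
Your proof is correct and complete. Restricting $J$ to $V(H')$ and showing by induction that every vertex of $H'$ is in at least as good a state under $J'$ as under $J$ is the natural argument. You use $E(H')\subseteq E(H)$ exactly where it is needed, at the red-neighbor step. You rely tacitly on the fact that $R_t$, $Y_t$, $G_t$ partition the vertex set at every step, both in your case split for the yellow bullet and in concluding $G'_N=V(H')$. This follows immediately from the algorithm by induction and could be stated in one line.

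The paper does not prove this proposition itself; it imports it from \cite{FirstTreatPaper}. Your argument therefore supplies a self-contained justification in the $r=s=1$ setting used here.
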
 

\begin{cor}
    The treatment number of $BT(d)$ is $3$ when $d \in \{9,10\}$.
\end{cor}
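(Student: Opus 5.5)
The plan is to combine the lower bound for $BT(8)$ from Theorem~\ref{thm:depth8} with Proposition~\ref{subgraph} for one inequality, and use the pathwidth bound of Corollary~\ref{cor:perfbin} for the other.

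\emph{Lower bound.} Fix $d \in \{9,10\}$. The complete binary tree $BT(8)$ is a subgraph of $BT(d)$: take the root of $BT(d)$ together with all vertices at distance at most $8$ from it. The subgraph induced on these vertices is exactly a complete binary tree of depth $8$, since every vertex at depth less than $8$ keeps both of its children. By Proposition~\ref{subgraph},
\[
\tau(BT(8)) \le \tau(BT(d)),
\]
and Theorem~\ref{thm:depth8} gives $\tau(BT(8)) = 3$. Hence $\tau(BT(d)) \ge 3$.

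\emph{Upper bound.} Apply Corollary~\ref{cor:perfbin}. For $d=9$ we have $\lceil 9/2\rceil = 5$, and for $d=10$ we have $\lceil 10/2 \rceil = 5$. In both cases
\[
\tau(BT(d)) \le \left\lceil \frac{1+5}{2}\right\rceil = 3.
\]

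Combining the two inequalities gives $\tau(BT(d)) = 3$ for $d \in \{9,10\}$. None of these steps is a real obstacle; the only point to state carefully is that $BT(8)$ embeds as a subgraph of $BT(d)$ (as the top $9$ levels), so that Proposition~\ref{subgraph} applies.
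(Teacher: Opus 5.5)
Your proposal is correct and follows essentially the same route as the paper: the lower bound comes from $BT(8) \subseteq BT(d)$, Proposition~\ref{subgraph}, and Theorem~\ref{thm:depth8}, and the upper bound from Corollary~\ref{cor:perfbin}. Your explicit check that $\lceil (1+5)/2\rceil = 3$ for both $d=9$ and $d=10$ is also correct, as is your justification that the top nine levels of $BT(d)$ form a copy of $BT(8)$.
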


\begin{proof}
Let $d \in \{9,10\}$.  We know $BT(8)$ is a subgraph of $BT(d)$ and $\tau(BT(8)) = 3$ by Theorem~\ref{thm:depth8},
thus $BT(d) \ge BT(8) \ge 3$.  The reverse inequality $BT(d) \le 3$ follows from Corollary~\ref{cor:perfbin}.  
\end{proof}

\section{Treatment number of $BT(7)$} \label{sec:depth-seven-tree}

 This section focuses on the complete binary tree of depth $7$, and we denote $BT(7)$ by $T$.  
 We are able to construct a protocol
 $\mathcal{P}$ of width two that clears $T$, and  it  consists of a sequence of eight sub-protocols.  For a graph $H$, we say that a sub-protocol ${\cal P}'$ {\it clears} a subgraph $H'$ of $H$ if all vertices of $H'$ are green after the last step of ${\cal P}'$. Figure~\ref{fig:T7} provides a high-level view of $T$,  with the root labeled  $r$ and 
  labels specified for the vertices at depths $1$, $2$ and $3$.  The remaining vertices  shown are contained in the depth $4$ complete binary trees $T_{z_i}$, rooted at $z_i$ for $1 \leq i \leq 8$.
   Figure~\ref{fig:T7}  also shows which of sub-protocols ${\cal P}_1$, ${\cal P}_2$, ${\cal P}_3$ is used to clear $T_{z_i}$ for each $i: 1 \leq i \leq 8$.  In forming $\cal P$, the sub-protocols will be applied in the order shown, for example, $\mathcal{P}_1$ is applied to clear $T_{z_1}$, then $\mathcal{P}_1$ is applied to clear $T_{z_2}$, then $\mathcal{P}_2$ is applied to clear $T_{z_3}$ and so on.

  Each of  Figures~\ref{fig:proto1}, \ref{fig:proto2}, and \ref{fig:proto3} provides a view of  $T$ that focuses on   subtree $T_{z_i}$ and shows the steps in the sub-protocols ${\cal P}_1$, ${\cal P}_2$, ${\cal P}_3$, respectively.  
In these figures, we label subtrees $T_w, T_x, T_y$, so that we can refer to them in  proofs.   Table~\ref{table:8sub} provides the sequence of the eight sub-protocols that form $\mathcal{P}$ and also the mapping between the labeling of vertices of $T$ in Figure~\ref{fig:T7} and that in Figures~\ref{fig:proto1}, \ref{fig:proto2}, and \ref{fig:proto3}. For example, when ${\cal P}_1$ is used to clear $T_{z_1}$ then $z_i = z_1$, $u = u_1$, $v = v_1$, etc. as in the first row of Table~\ref{table:8sub}, and when ${\cal P}_1$ is used to clear $T_{z_8}$ then $z_i = z_8$, $u = u_4$, $v = v_2$, etc. as in the last row of Table~\ref{table:8sub}. 
In sub-protocols ${\cal P}_2$ and ${\cal P}_3$  (Figures~\ref{fig:proto2} and \ref{fig:proto3}), some vertices will turn from green to yellow to red and this is indicated in the figures by a red boxed number.  The labels listed in Figures~\ref{fig:proto1}, \ref{fig:proto2}, and \ref{fig:proto3}  indicate the time-steps $t'$ within the relevant sub-protocol ${\cal P}_i$.  When the sub-protocols are combined to create protocol $\cal P$, these labels will shift.  For example, vertex $\beta_1$  is treated at time-steps $3$, $5$, and $7$ within ${\cal P}_1$, but when ${\cal P}_1$ is used starting at time-step $t$ then 
$\beta_1$  is treated at time-steps $t+3$, $t+5$, and $t+7$. 

\begin{figure}[htp] 
\centering 
\begin{tikzpicture}[scale=.85]
\draw[thick] (4,2)--(8,4)--(12,2);
\draw[thick] (5.1,2)--(5.5,2.75);
\draw[thick] (6.2,2)--(7.2,2.8)--(7.4,2);
\draw[thick] (8.5,2)--(8.8,2.8)--(9.7,2);
\draw[thick] (8.8,2.75)--(9,3.5);
\draw[thick] (7.2, 2.75)--(7,3.5);
\draw[thick] (10.8, 2)--(10.5,2.75);

\draw[gray, thick] (4,2)--(3.6,0)--(4.3,0)--(4,2);
\draw[gray, thick] (5.1,2)--(4.7,0)--(5.4,0)--(5.1,2);
\draw[gray, thick] (6.2,2)--(5.9,0)--(6.6,0)--(6.2,2);
\draw[gray, thick] (7.4,2)--(7,0)--(7.7,0)--(7.4,2);

\draw[gray, thick] (8.5,2)--(8.2,0)--(8.9,0)--(8.5,2);
\draw[gray, thick] (9.7,2)--(9.3,0)--(10,0)--(9.7,2);
\draw[gray, thick] (10.8,2)--(10.5,0)--(11.2,0)--(10.8,2);
\draw[gray, thick] (12,2)--(11.6,0)--(12.3,0)--(12,2);

\filldraw[black]
(8,4) circle [radius=3pt]
(7,3.5) circle [radius=3pt]
(9,3.5) circle [radius=3pt]
(5.5,2.75) circle [radius=3pt]
(7.2,2.8) circle [radius=3pt]
(8.8,2.8) circle [radius=3pt]
(10.5,2.75) circle [radius=3pt]
(4,2) circle [radius=3pt]
(5.1,2) circle [radius=3pt]
(6.2,2) circle [radius=3pt]
(7.4,2) circle [radius=3pt]
(8.5,2) circle [radius=3pt]
(9.7,2) circle [radius=3pt]
(10.8,2) circle [radius=3pt]
(12,2) circle [radius=3pt]
;

\node at (7.7,4.1) {\small $r$};
\node at (6.7,3.6) {\small $v_1$};
\node at (8.6,3.5) {\small $v_2$};

\node at (5.15,2.8) {\small $u_1$};
\node at (6.8,2.8) {\small $u_2$};
\node at (8.4,2.8) {\small $u_3$};
\node at (10.15,2.75) {\small $u_4$};

\node at (3.7,2) {\small $z_1$};
\node at (4.8,2) {\small $z_2$};
\node at (5.9,2) {\small $z_3$};
\node at (7.1,2) {\small $z_4$};
\node at (8.2,2) {\small $z_5$};
\node at (9.4,2) {\small $z_6$};
\node at (10.5,2) {\small $z_7$};
\node at (11.6,2) {\small $z_8$};

\node at (4,.25) {\small $T_{z_1}$};
\node at (5.1,.25) {\small $T_{z_2}$};
\node at (6.3,.25) {\small $T_{z_3}$};
\node at (7.4,.25) {\small $T_{z_4}$};
\node at (8.6,.25) {\small $T_{z_5}$};
\node at (9.7,.25) {\small $T_{z_6}$};
\node at (10.87,.25) {\small $T_{z_7}$};
\node at (12,.25) {\small $T_{z_8}$};

\node at (4,-.5) {\small \color{blue} $\mathcal P_1$};
\node at (5.1,-.5) {\small \color{blue} $\mathcal P_1$};
\node at (6.2,-.5) {\small \color{blue} $\mathcal P_2$};
\node at (7.35,-.5) {\small \color{blue} $\mathcal P_3$};
\node at (8.6,-.5) {\small \color{blue} $\mathcal P_2$};
\node at (9.7,-.5) {\small \color{blue} $\mathcal P_3$};
\node at (10.9,-.5) {\small \color{blue} $\mathcal P_1$};
\node at (12,-.5) {\small \color{blue} $\mathcal P_1$};

\end{tikzpicture}
\caption{A high-level view of the tree $BT(7)$ showing which sub-protocol ${\cal P}_1, $ ${\cal P}_2, $ or ${\cal P}_3$ is used to clear each of the depth $4$ trees rooted at $z_i$.} 
\label{fig:T7}
\end{figure}

The next lemma specifies a set of vertices that will be green after sub-protocol $\mathcal{P}_1$ is applied to tree $T$.  Note that in applying sub-protocol $\mathcal{P}_1$, we make no assumptions about the color of any of the vertices that are not treated in the sub-protocol.  

\begin{lemma}\label{lem1} 
The sub-protocol ${\cal P}_1$, shown in Figure~\ref{fig:proto1}, has width $2$ and after it is applied to tree $T$, the vertices in $T_{z_i}$ are all green.  Furthermore, in $\mathcal{P}_1$, vertex $u$ is treated   at every other time-step including the last one. 
\end{lemma}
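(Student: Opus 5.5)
The plan is to verify Lemma~\ref{lem1} by simulating Algorithm~\ref{alg-treatment} on the vertices of $T_{z_i}$ together with $u$, time-step by time-step, for the sequence of treatment sets read off Figure~\ref{fig:proto1}. We make no assumption about the colors of vertices outside the treated ones, so we treat every untreated vertex outside $T_{z_i}\cup\{u\}$ as possibly red at all times. In particular, the parent of $u$ (a vertex at depth~1 or the root region, depending on the row of Table~\ref{table:8sub}) and the sibling subtree of $z_i$ are assumed red throughout.

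Width $2$ and the claim that $u$ is treated at every other step, including the last, are read directly from the figure. The labels of $u$ list exactly the odd (or alternating) time-steps ending at the final step $t'_{\max}$, and at each step at most two vertices carry that label. This is a bookkeeping check.

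The substantive part is that all of $T_{z_i}$ is green at the end. I would organize $T_{z_i}$ ($BT(4)$, rooted at $z_i$) into the labeled subtrees $T_w,T_x,T_y$ of the figure and argue in order of clearing.

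\emph{Key invariant.} Since $u$ is re-treated every other step, $u$ is green at least every other step. Its child $z_i$ is therefore protected: once $z_i$ is green, it sees at worst a yellow $u$ (never red) on the steps between treatments, provided $u$ is re-treated before it would turn red. More precisely, if $u$ is treated at $t$, it is green at $t$, at worst yellow at $t+1$, and treated again at $t+2$, so $u\notin R_{t+1}$. The same mechanism applies to internal vertices like $\beta_1$, which is treated at steps $3,5,7$: each such periodically re-treated vertex shields its already-cleared children from red neighbors above.

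For each leaf-level pair treated together, I would check that at the step they are treated, their parent is green or yellow rather than red. After that, they have no red neighbors (leaves have only their parent), so they stay green. For each internal vertex, I would check that from its last treatment onward, none of its neighbors is red at any later step. Its children are cleared, and its parent is either cleared or periodically re-treated.

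The main obstacle is handling the vertices shown at positions where a green vertex becomes yellow, namely cleared subtrees adjacent to a still-red subtree. One must verify that the re-treatment occurs exactly one step later, before the yellow vertex turns red. I expect this to be done by tracking $R_t$ on a small frontier (at most the boundary of the green region, of size about $3$ by the spirit of Theorem~\ref{binary-tree-thm}) rather than the whole tree. I would tabulate $A_{t'}$ and the frontier $R_{t'}\cap N(G_{t'})$ in a short table like Table~\ref{Alg-implement-table}, and confirm at the final step that every vertex of $T_{z_i}$ was last treated at a time after which no neighbor is red. The sibling subtree outside $T_{z_i}$ only touches $T_{z_i}$ through $u$, which is never red, so the conclusion holds regardless of outside colors.
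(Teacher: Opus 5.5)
Your plan is correct and follows essentially the paper's own argument. Both read off that each of $t'=1,\dots,33$ carries exactly two labels, with $u$ treated at every odd step. Both note that $u$ is therefore never red and insulates $T_{z_i}$. Both check that alternating re-treatment of internal vertices such as $\beta_1$, $\alpha_1$, $\alpha_2$, $\beta_4$ stops every cleared vertex from turning red.

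Two slips to fix when you write it out. First, $T_w,T_x,T_y$ hang off $v$ and $u$ and lie outside $T_{z_i}$; the natural pieces are $T_{\alpha_1}$, $z_i$, $T_{\beta_3}$, $T_{\beta_4}$. Second, your leaf test ``parent green or yellow when the leaf is treated'' already fails at $t'=1$: the leaf treated then has a red parent, which is only treated at $t'=2$. Use instead the criterion you state at the end: no neighbor of the vertex is red at any step after its last treatment.
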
 

\begin{proof} 
There are $33$ time-steps $t'$  in ${\cal P}_1$  and vertex $u$ is treated at each odd value of $t'$.  Each of the numbers $1$, $2$, \ldots, $33$ appears exactly twice, so ${\cal P}_1$ has width $2$.    It remains to show that ${\cal P}_1$ clears the vertices in $T_{z_i}$.  First, the $7$ vertices in the left subtree of $\alpha_1$ are cleared, then $\alpha_1$ is treated at every other time-step until its right subtree is cleared.  At   $t'=17$, the left subtree of $z_i$ is clear.  Vertex $z_i$ is not turned red by $u$ since $u$ is treated at every other time-step.  Likewise, is is not turned red by $\alpha_2$ because $\alpha_2$ is first treated at $t'=18$ and then again in every other time-step until $t'=26$.  By time-step $26$ the subtree rooted at $\beta_3$ is clear.  Starting at $t'=27$, vertex $\beta_4$ is treated during every other time-step until $t'=31$, and its subtree is clear at $t'=33$.  Thus all vertices in the subtree rooted at $z_i$ are clear by $t'=33$. Indeed, in  ${\cal P}_1$, no vertex turns red once it is green.
\end{proof}

\begin{table}[htbp] \small 
\[ \begin{tabular}{|c|c||c|c|c|c|c|c|}
\hline
stage & sub-protocol & $z_i$ & $u$ & $y$ & $v$ & $w$ & $x$ \\
\hline
1. & $\mathcal{P}_1$ & $z_1$ & $u_1$ & $z_2$ & $v_1$ & $u_2$ & $r$ \\
2. & $\mathcal{P}_1$ & $z_2$ & $u_1$ & $z_1$ & $v_1$ & $u_2$ & $r$ \\
3. & $\mathcal{P}_2$ & $z_3$ & $u_2$ & $z_4$ & $v_1$ & $u_1$ & $r$ \\
4. & $\mathcal{P}_3$ & $z_4$ & $u_2$ & $z_3$ & $v_1$ & $u_1$ & $r$ \\
5. & $\mathcal{P}_2$ & $z_5$ & $u_3$ & $z_6$ & $v_2$ & $r$ & $u_4$ \\
6. & $\mathcal{P}_3$ & $z_6$ & $u_3$ & $z_5$ & $v_2$ & $r$ & $u_4$ \\
7. & $\mathcal{P}_1$ & $z_7$ & $u_4$ & $z_8$ & $v_2$ & $r$ & $u_3$ \\
8. & $\mathcal{P}_1$ & $z_8$ & $u_4$ & $z_7$ & $v_2$ & $r$ & $u_3$ \\
\hline
\end{tabular} \]

\caption{The protocol $\mathcal{P}$ consisting of this sequence of 8 sub-protocols, and the values of $z_i,u,y,v,w$, and $x$, for each  sub-protocol in the sequence.}
\label{table:8sub}
\end{table}

Sub-protocols $\mathcal{P}_2$ and $\mathcal{P}_3$ are each applied when there is a particular set of existing green vertices in tree $T$.  For sub-protocol $\mathcal{P}_2$, the vertices in $T_w$ are initially green, as indicated by the shaded portion of Figure~\ref{fig:proto2}.  Similarly, for sub-protocol $\mathcal{P}_3$, the vertices $x,v,u,z_i$, as well as those in $T_w$ and $T_y$ are all initially green, as shown in Figure~\ref{fig:proto3}. Lemmas~\ref{lem2} and~\ref{lem3} specify which vertices must be green  initially and which vertices will be green at the last time-step of these protocols.

\begin{figure}[htb]
\centering
\begin{tikzpicture}[scale=.83]
\draw[thick] (-.3,0)--(.5,1)--(1,0); 
\draw[thick] (2,0)--(2.5,1)--(3.3,0); 
\draw[thick] (.5,1)--(1.5,1.8)--(2.5,1); 

\draw[thick] (4.7,0)--(5.5,1)--(6,0); 
\draw[thick] (7,0)--(7.5,1)--(8.3,0); 
\draw[thick] (5.5,1)--(6.5,1.8)--(7.5,1); 

\draw[thick] (9.7,0)--(10.5,1)--(11,0); 
\draw[thick] (12,0)--(12.5,1)--(13.3,0); 
\draw[thick] (10.5,1)--(11.5,1.8)--(12.5,1); 

\draw[thick] (14.7,0)--(15.5,1)--(16,0); 
\draw[thick] (17,0)--(17.5,1)--(18.3,0); 
\draw[thick] (15.5,1)--(16.5,1.8)--(17.5,1); 

\draw[thick] (16.5,1.8)--(14,3)--(11.5,1.8);

\draw[thick] (6.5,1.8)--(4,3)--(1.5,1.8);

\draw[thick] (14,3)--(9,4)--(4,3);
\draw[thick] (9,4)--(9,7);
\draw[thick] (9,6)--(11.7,6);
\draw[thick] (7,8.5)--(9,7)--(11,8.5);

\draw (12,8.5) ellipse (1.5 and 0.5);
\draw (6,8.5) ellipse (1.5 and 0.5);
\draw (12.8,6) ellipse (1.5 and 0.5);

\node(ell9) at (4,8.6) {$T_w$};
\node(ell9) at (14,8.6) {$T_x$};
\node(ell9) at (14.8,6.1) {$T_y$};

\filldraw[black]
(11,8.5) circle [radius=3pt]
(11.7,6) circle [radius=3pt]
(7,8.5) circle [radius=3pt]
(9,7) circle [radius=3pt]
(9,6) circle [radius=3pt]
(9,4) circle [radius=3pt]
(4,3) circle [radius=3pt]
(14,3) circle [radius=3pt]

(11.5,1.8) circle [radius=3pt]
(12.5,1) circle [radius=3pt]
(10.5,1) circle [radius=3pt]
(13.3,0) circle [radius=3pt]
(12,0) circle [radius=3pt]
(11,0)circle [radius=3pt]
(9.7,0)circle [radius=3pt]

(1.5,1.8) circle [radius=3pt]
(2.5,1) circle [radius=3pt]
(.5,1) circle [radius=3pt]
(3.3,0) circle [radius=3pt]
(2,0) circle [radius=3pt]
(1,0)circle [radius=3pt]
(-.3,0)circle [radius=3pt]
;
\filldraw[black]
(16.5,1.8) circle [radius=3pt]
(15.5,1) circle [radius=3pt]
(17.5,1) circle [radius=3pt]
(18.3,0) circle [radius=3pt]
(17,0) circle [radius=3pt]
(16,0)circle [radius=3pt]
(14.7,0)circle [radius=3pt]

(6.5,1.8) circle [radius=3pt]
(5.5,1) circle [radius=3pt]
(7.5,1) circle [radius=3pt]
(8.3,0) circle [radius=3pt]
(7,0) circle [radius=3pt]
(6,0)circle [radius=3pt]
(4.7,0)circle [radius=3pt]
;
\node(ell8) at (6.6,8.5) {$w$};
\node(ell8) at (11.4,8.5) {$x$};
\node(ell8) at (12.2,6) {$y$};

\node(ell4) at (9,7.3) {$v$};
\node(ell4) at (9.3,5.7) {$u$};
\node(ell0) at (4.4,6) {\color{green!75!black}  \footnotesize $[1,3,5,7,9,11,13,15,17,19,21,23,25,27,29,31,33]$};
\node(ell4) at (8.6,4.2) {$z_i$};
\node(ell0) at (9.4,4.2) {\color{green!75!black}  \footnotesize $[17]$};
\node(ell3) at (13.1,3) {$\alpha_2$};
\node(ell0) at (16,3) {\color{green!75!black}  \footnotesize $[18,20,22,24,26]$};
\node(ell3) at (3.4,3) {$\alpha_1$};
\node(ell0) at (6.1,2.9) {\color{green!75!black}  \footnotesize $[8,10,12,14,16]$};

\node(ell2) at (15.9,1.8) {$\beta_4$};
\node(ell2) at (17.55,2) {\color{green!75!black} \small $[27,29,31]$};
\node(ell1) at (14.8,1.2) {{\color{green!75!black}\small  $[28,30]$}};
\node(ell1) at (18,1.2) {\color{green!75!black} \small $[32]$};
\node(ell0) at (14.7,-.4) {\color{green!75!black}  \small$[28]$};
\node(ell0) at (16.05,-.4) {\color{green!75!black} \small $[30]$};
\node(ell0) at (17,-.4) {\color{green!75!black} \small $[32]$};
\node(ell0) at (18.3,-.4) {\color{green!75!black}  \small $[33]$};

\node(ell2) at (12.2,1.8) {$\beta_3$};
\node(ell2) at (10.9,2) {\color{green!75!black} \small $[22]$};
\node(ell1) at (9.75,1.2) {{\color{green!75!black}\small  $[19,21]$}};
\node(ell1) at (13.2,1.2) {\color{green!75!black} \small $[23,25]$};
\node(ell0) at (9.7,-.4) {\color{green!75!black}  \small$[18]$};
\node(ell0) at (11.05,-.4) {\color{green!75!black} \small $[20]$};
\node(ell0) at (12.05,-.4) {\color{green!75!black} \small $[24]$};
\node(ell0) at (13.3,-.4) {\color{green!75!black}  \small $[26]$};

\node(ell2) at (5.9,1.8) {$\beta_2$};
\node(ell2) at (6.95,2) {\color{green!75!black} \small $[12]$};
\node(ell1) at (4.8,1.2) {{\color{green!75!black}\small  $[9,11]$}};
\node(ell1) at (8.2,1.2) {\color{green!75!black} \small $[13,15]$};
\node(ell0) at (4.7,-.4) {\color{green!75!black}  \small$[8]$};
\node(ell0) at (6.05,-.4) {\color{green!75!black} \small $[10]$};
\node(ell0) at (7.05,-.4) {\color{green!75!black} \small $[14]$};
\node(ell0) at (8.3,-.4) {\color{green!75!black}  \small $[16]$};

\node(ell2) at (2.1,1.8) {$\beta_1$};
\node(ell2) at (.7,2) {\color{green!75!black} \small $[3,5,7]$};
\node(ell1) at (.1,1.2) {{\color{green!75!black}\small  $[2]$}};
\node(ell1) at (3.1,1.2) {\color{green!75!black} \small $[4,6]$};
\node(ell0) at (-.3,-.4) {\color{green!75!black}  \small$[1]$};
\node(ell0) at (1.05,-.4) {\color{green!75!black} \small $[2]$};
\node(ell0) at (2.05,-.4) {\color{green!75!black} \small $[4]$};
\node(ell0) at (3.3,-.4) {\color{green!75!black}  \small $[6]$};

\end{tikzpicture}

\caption{Sub-protocol ${\cal P}_1$ applied to  $T$. The green number(s) next to a vertex $q$ indicate the time-step(s) in ${\cal P}_1$ at which $q$ is treated.}

\label{fig:proto1}

\end{figure}

\begin{lemma}\label{lem2} 
Let $T_w,T_x,T_y$, be the subtrees of $T$ shown in Figure~\ref{fig:proto2} and let  ${\cal P}_2$ the  sub-protocol also shown in that figure.  
Further, suppose that at time $0$ of ${\cal P}_2$, the vertices in $T_w$ are green.      
Then  ${\cal P}_2$ has width $2$ and when it is applied 
 to $T$, at the last time-step, vertices $x$ and $y$ are treated and the vertices in $T_w,T_{z_i}$ and $\{u,v,x,y\}$ are also green.
\end{lemma}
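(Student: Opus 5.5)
The proof of Lemma~\ref{lem2} will be a direct verification against Algorithm~\ref{alg-treatment}, organized like the proof of Lemma~\ref{lem1}, with one extra ingredient: here green vertices are allowed to turn yellow and red (the red boxed numbers in Figure~\ref{fig:proto2}), so we must check that this does no lasting damage.

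The first step is the width claim. We list, for each time-step $t'$ of $\mathcal{P}_2$, the vertices whose bracketed labels in Figure~\ref{fig:proto2} contain $t'$, and confirm that each time-step appears at most twice. This is a finite count, as in Lemma~\ref{lem1}.

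The second step handles the vertices outside $T_{z_i}$. The protocol makes no assumption about the initial colors of $T_x$, $T_y$, or the path vertices $u,v,x,y$, except that all of $T_w$ is green. We will show that $T_w$ stays green throughout. The only neighbor of $T_w$ outside it is $v$, the parent of $w$. So it suffices to check that $w$ is never adjacent to a red vertex. For this we verify that $v$ is treated often enough: whenever $v$ is green at step $t'-1$ and not treated at $t'$, it must still have no red neighbor at $t'$. Concretely, in every window of two consecutive time-steps in which $v$ could be yellow or red, either $v$ or $w$ is treated. We also need to check that $v$'s other neighbors, namely $u$ and $x$, are treated in a pattern that never makes $v$ red while $w$ is untreated. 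At the last time-step, $x$ and $y$ are treated by construction. The vertices $u$ and $v$ must then be shown green by tracing their last treatments and confirming that no neighbor is red afterwards. This needs $T_x$ and $T_y$: their roots $x$ and $y$ are freshly green at the final step, so they are not red.

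The third step checks the inside of $T_{z_i}$, following the order of treatments in the figure, bottom-up within subtrees as in Lemma~\ref{lem1}. We argue that each subtree rooted at an $\alpha_j$ or $\beta_j$ is cleared in turn. Its root is re-treated every other step until its remaining children are cleared, and once a subtree is cleared, no vertex inside it has a red neighbor again. The vertex $z_i$ itself needs special care, since $u$ may be red while its other subtree is unfinished.

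The main obstacle is the bookkeeping of the vertices that go green to yellow to red. For each red-boxed vertex we must confirm two things: that it is treated again later, and that while it is red it is not adjacent to a green vertex that will not be re-treated in the next step. I would handle this with a short step-by-step table, like Table~\ref{Alg-implement-table}, restricted to the vertices near the path $z_i,u,v$ and the roots of the partially cleared subtrees. I would argue the fully cleared deep subtrees by the invariant instead.
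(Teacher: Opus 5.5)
Your plan of directly simulating $\mathcal{P}_2$ is the same route the paper takes. The gap is that the invariants you rely on in your second and third steps are false for this protocol, so those steps fail as written.

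\textbf{The argument for $T_w$.} You reduce the claim about $T_w$ to ``$w$ is never adjacent to a red vertex,'' and then to ``$v$ never has a red neighbor when untreated.'' Take the admissible start in which everything outside $T_w$ is red. Then $u$ is untreated until $t'=33$ and $x$ until $t'=35$, so $v$ is yellow at every even step up to $18$. Worse, $v$ is itself one of the red-boxed vertices: it is red at $t'=19,20$, at $25,\dots,29$, and at $38,\dots,43$. So $w$ does turn yellow, and $T_w$ does not ``stay green throughout.'' Your window condition also fails: neither $v$ nor $w$ is treated at $t'=18,19$ or at $t'=24,25$. What actually saves $T_w$ is a rescue mechanism. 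Whenever $v$ is red and $w$ is untreated, $w$ is treated at the next step: at $20$; at $26,28,30$; and at $39,41,43$. Hence $w$ is never red, its descendants never see a red neighbor, and once $v$ is treated at $t'=44$, $w$ is green at $44$ and $45$.

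\textbf{The argument inside $T_{z_i}$.} Your invariant ``once a subtree is cleared, no vertex in it has a red neighbor again'' fails for the subtree of $\alpha_1$, which is clear at $t'=16$, because $z_i$ is red at $t'=24,25,26$. The proof needs $\alpha_1$'s extra treatments at $25$ and $27$. It also needs the fact that $z_i$, treated at $27,29,31,33,38,40,42$, is never red again, even though $u$ is red during $28$--$32$ and $37$--$42$.

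Your final paragraph states the right principle: a red vertex must not be adjacent to a green vertex that is not re-treated at the next step. It must be applied at \emph{every} step the vertex remains red, not only at the boxed step. This bookkeeping for the pairs $v\to w$, $z_i\to\alpha_1$ and $u\to z_i$ is exactly the content of the paper's proof, and your middle steps should be rebuilt around it.

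\textbf{Initial colors.} A step-by-step table needs fixed initial colors. The paper assumes without loss of generality that every vertex outside $T_w$ starts red. This is legitimate because the process is monotone in the initial coloring: making a vertex less red at time $0$ can only shrink every $R_t$ and enlarge every $G_t$. You should state this reduction explicitly.
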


\begin{proof} 
There are $45$ time-steps $t'$  in ${\cal P}_2$ shown in Figure~\ref{fig:proto2} and vertices $x$ and $y$ are treated at time-step $45$.
Each of  the unboxed  numbers $1$, $2$, \ldots, $45$ appears exactly twice, so ${\cal P}_2$ has width $2$. 
 It remains to verify that at time-step $45$, all vertices shown in the figure are green, except possibly those in $T_x$ (other than $x$); and those in $T_y$ (other than $y$).

Without loss of generality, we may assume that at time $0$ of ${\cal P}_2$, the vertices other than those in $T_w$ are red.  As in sub-protocol ${\cal P}_1$, the vertices in the subtree rooted at $\alpha_1$ are green when $t'=16$.   Vertex $z_i$ is treated at time-steps $17, 18, 20$ and $22$, and turns red when $t' = 24$, causing $\alpha_1$ to turn yellow when $t' = 24$.  However, $\alpha_1$ is treated at time-steps $25$ and $27$ and   we will show that vertex $z_i$ is green or yellow starting at  $t' = 27$, so the entire subtree rooted at $\alpha_1$ will be  green at the end of ${\cal P}_2$.    One can check that the vertices in the subtree rooted at $\alpha_2$ never turn red once they are green, and all are green by $t'= 42$, thus vertex $z_i$ does not turn red from its adjacency to $\alpha_2$ after $t' = 29$.   

Vertex $v$ turns red  at time-steps $19$, $25$, and $38$ (due to its adjacency with red neighbors $x$ and $u$), 
so $w$ turns yellow at those time steps, but is treated when $t' = 20, 26, 39, 41, 43$, so $w$ is never red and the other vertices in $T_w$ remain green throughout sub-protocol ${\cal P}_2$.    Vertex $v$ is treated at time-step $44$, so $w$ remains green after it is treated at $t' = 43$.  Vertex $u$ is treated at $t' = 43$, and its neighbors $v$ and $y$ are  green at $t' = 44, 45$, so $u$ is green at time-step $45$ unless $z_i$ is red at time-step $44$ or $45$.  Vertex $z_i$ is treated at $t' = 27, 29, 31, 33$, and it does not   turn yellow again until $t' = 37$,  but then it is treated at $t' = 38, 40, 42$, so it does not turn red between $t' = 27$ and $t' = 42$.  After this, its neighbors are green, so it remains green at time-steps $43$, $44$, and $45$, and thus remains green at the end of  ${\cal P}_2$.

Thus, at the conclusion, the only vertices in $T$ that may not be green, are those in $T_x$ (other than $x$ and those in $T_y$ (other than $y$).
\end{proof}

We omit the proof of Lemma~\ref{lem3} since it is similar to that of Lemma~\ref{lem2}.

\begin{lemma}\label{lem3} Let $T$ be the tree and $T_w$, $T_x$, $T_y$ be the subtrees shown in Figure~\ref{fig:proto3} and let $\mathcal{P}_3$ be the sub-protocol also shown in that figure. Further, suppose that at time $0$ of ${\cal P}_3$, vertices in subtrees $T_w$, $T_y$, along with $\{u,v,x,z_i\}$ are green.  Then $\mathcal{P}_3$ has width $2$ and when it is applied to $T$, at the last time-step, vertex $x$ is treated and the vertices in $T_{z_i}$, $T_w$, $T_y$ and $\{u,v\}$ are also green.
\end{lemma}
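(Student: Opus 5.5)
The plan is to follow the structure of the proof of Lemma~\ref{lem2}: a direct verification of sub-protocol $\mathcal{P}_3$ read off from Figure~\ref{fig:proto3}, tracking the color of every vertex in $T_{z_i}\cup T_w\cup T_y\cup\{u,v,x\}$ through the time-steps $t'$.

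\textbf{Width.} I would first tabulate the treatment times in Figure~\ref{fig:proto3}. The claim is that each time-step $t'$ appears as an unboxed label exactly twice, which gives width $2$, and that $x$ appears among the treated vertices at the final time-step.

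\textbf{Worst-case initial colors.} Next I would fix the colors at time $0$. Because extra green vertices only help (a green vertex can never cause a neighbor to change state), I may assume that every vertex outside $T_w\cup T_y\cup\{u,v,x,z_i\}$ is red. In particular, all of $T_{z_i}$ other than $z_i$ is red, and so is the remainder of $T_x$.

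\textbf{Subtree $T_{z_i}$.} I would argue that the clearing of $T_{z_i}$ mirrors Lemma~\ref{lem1}. The subtree below $\alpha_1$ is cleared leaf-first, with $\alpha_1$ re-treated at alternate steps. Then the subtree below $\alpha_2$ is cleared with $\alpha_2$, $\beta_3$ and $\beta_4$ re-treated at alternate steps, so that no green vertex of $T_{z_i}$ ever becomes red. The vertex $z_i$ starts green, but its children $\alpha_1$ and $\alpha_2$ are red, so it turns yellow and would turn red unless it is re-treated. The figure should therefore either re-treat $z_i$ at alternate steps, or let it turn red (a red boxed number) and re-treat it, and re-treat $\alpha_1$ if it turns yellow, as happened in Lemma~\ref{lem2}. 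In the latter case I would check that after the last time $z_i$ is red, both $\alpha_1$ and $u$ are re-treated within one step.

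\textbf{Vertices $u,v,x$ and subtrees $T_w$, $T_y$.} The vertex $u$ has neighbors $v$, $y$ and $z_i$. Since $y$ is green and $T_y$ is entirely green, $y$ stays green as long as $u$ never becomes red. So $u$ is endangered only by $z_i$, and I would verify that $u$ is re-treated each time $z_i$ is red. The vertex $x$ has red children in $T_x$, so it is re-treated at alternate steps and in particular at the last step. This keeps $v$ from being endangered by $x$ except at the red boxed times. At those times, $w$ and $u$ must be re-treated within one step, so that $T_w$ (whose internal vertices are green and have only green neighbors) and $T_y$ stay green. Finally, $v$ itself must be treated late enough, with all its neighbors non-red afterwards, so that it is green at the end.

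\textbf{Main obstacle.} The hard part is the interaction at the junction $x$--$v$--$u$--$z_i$. Here $x$ is treated only every other step, and $z_i$ may go red while its subtree is processed. Consequently $v$ and $u$ can cycle through yellow and red, and I must confirm that each such red episode is followed within one step by treatment of every green neighbor that turned yellow. I would handle this by listing explicitly, for $t'$ around each red boxed time, the states of $x,v,u,z_i,w,y,\alpha_1,\alpha_2$, exactly as in the paragraph on $v$ and $w$ in Lemma~\ref{lem2}. I would then close by checking that in the last two time-steps every neighbor of $u$, $v$ and $z_i$ is non-red, so these vertices are green at the end.
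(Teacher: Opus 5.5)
Your method is what the paper intends when it omits the proof as ``similar to that of Lemma~\ref{lem2}'': assume without loss of generality that everything outside $T_w\cup T_y\cup\{u,v,x,z_i\}$ is red, which is legitimate because the update rule is monotone in the order red $<$ yellow $<$ green, and then simulate Figure~\ref{fig:proto3}. But the lemma has no content beyond that simulation, and several invariants you commit to are false for $\mathcal{P}_3$, so your checks as framed would fail.

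(i) You plan to protect $T_y$ by checking that $u$ is re-treated whenever $z_i$ is red. In fact $z_i$ is red for $t'=8,\dots,11$. Meanwhile $u$ is yellow at $t'=8$, red at $t'=9,10$ (the boxed $9$), and re-treated only at $t'=11$. What saves $T_y$ and $T_w$ is that $y$ and $v$, both yellow at $t'=9$, are treated at $t'=10$; this is the only treatment of $y$.

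(ii) You expect $w$ and $u$ to be re-treated after the red episodes of $x$. In fact $w$ is never treated in $\mathcal{P}_3$. The vertex $x$ is red for $t'=10,\dots,12$ and $t'=19,\dots,26$, and $v$ is treated at $10,12,20,22,24,26$. So $v$ only alternates between yellow and green and never turns red, and $T_w$ is never threatened.

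(iii) In $T_{z_i}$, $\alpha_1$ is not re-treated at alternate steps. It is treated only at $t'=14$, between the alternating treatments of $\beta_1$ ($5,7,\dots,13$) and $\beta_2$ ($15,17,19,21$). It is also false that no vertex of $T_{z_i}$ below $z_i$ turns red once green. After its treatments at $t'=16,18$, $\alpha_2$ is red for $t'=20,\dots,25$. During that window $z_i$ is yellow at $20,22,24$ and is saved by treatments at $21,23,25$.

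The verification that does go through runs as follows.
\begin{itemize}
\item $y$ and $v$ are never red, and $z_i$ is never red after $t'=12$ (only yellow at $13,15,20,22,24$). Hence $u$ is green from $t'=11$ on.
\item $\alpha_1$ is green from $t'=14$ on, because $\beta_1$, $\beta_2$ and $z_i$ are never red afterwards.
\item $\alpha_2$ is treated at every even step from $26$ to $36$ and stays green thereafter.
\item The subtrees of $\beta_3$ and $\beta_4$ are cleared during $t'=27,\dots,42$, while $x$ takes every odd step from $27$ to $43$.
\end{itemize}
For the width, steps $1$ through $42$ each treat exactly two vertices and step $43$ treats only $x$. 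So ``every label appears exactly twice'' is slightly off, though width $2$ holds.

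Your closing principle is correct: each red episode must be followed within one step by treatment of every green neighbor it turned yellow. Applied to the actual schedule, it gives exactly the checks above. But you need to drop the false invariants (i)--(iii) and actually carry out the simulation.
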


\begin{figure}[htb]
\centering
\begin{tikzpicture}[scale=.83]
\filldraw[green!25!white]  (6,8.5) ellipse (1.54 and 0.54);
\draw[thick] (-.3,0)--(.5,1)--(1,0); 
\draw[thick] (2,0)--(2.5,1)--(3.3,0); 
\draw[thick] (.5,1)--(1.5,1.8)--(2.5,1); 

\draw[thick] (4.7,0)--(5.5,1)--(6,0); 
\draw[thick] (7,0)--(7.5,1)--(8.3,0); 
\draw[thick] (5.5,1)--(6.5,1.8)--(7.5,1); 

\draw[thick] (9.7,0)--(10.5,1)--(11,0); 
\draw[thick] (12,0)--(12.5,1)--(13.3,0); 
\draw[thick] (10.5,1)--(11.5,1.8)--(12.5,1); 

\draw[thick] (14.7,0)--(15.5,1)--(16,0); 
\draw[thick] (17,0)--(17.5,1)--(18,0); 
\draw[thick] (15.5,1)--(16.5,1.8)--(17.5,1); 

\draw[thick] (16.5,1.8)--(14,3)--(11.5,1.8);

\draw[thick] (6.5,1.8)--(4,3)--(1.5,1.8);

\draw[thick] (14,3)--(9,4)--(4,3);
\draw[thick] (9,4)--(9,7);
\draw[thick] (9,5.8)--(11.7,6);
\draw[thick] (7,8.5)--(9,7)--(11,8.5);

\draw (12,8.5) ellipse (1.5 and 0.5);
\draw (6,8.5) ellipse (1.5 and 0.5);
\draw (12.8,6) ellipse (1.5 and 0.5);

\node(ell9) at (4,8.6) {$T_w$};
\node(ell9) at (14,8.6) {$T_x$};
\node(ell9) at (14.8,6.1) {$T_y$};

\filldraw[black]
(11,8.5) circle [radius=3pt]
(11.7,6) circle [radius=3pt]
(7,8.5) circle [radius=4pt]
(9,7) circle [radius=3pt]
(9,5.8) circle [radius=3pt]
(9,4) circle [radius=3pt]
(4,3) circle [radius=3pt]
(14,3) circle [radius=3pt]

(11.5,1.8) circle [radius=3pt]
(12.5,1) circle [radius=3pt]
(10.5,1) circle [radius=3pt]
(13.3,0) circle [radius=3pt]
(12,0) circle [radius=3pt]
(11,0)circle [radius=3pt]
(9.7,0)circle [radius=3pt]

(1.5,1.8) circle [radius=3pt]
(2.5,1) circle [radius=3pt]
(.5,1) circle [radius=3pt]
(3.3,0) circle [radius=3pt]
(2,0) circle [radius=3pt]
(1,0)circle [radius=3pt]
(-.3,0)circle [radius=3pt]
;
\filldraw[green!65!black]
(7,8.5) circle [radius=3pt];
\filldraw[black]
(16.5,1.8) circle [radius=3pt]
(15.5,1) circle [radius=3pt]
(17.5,1) circle [radius=3pt]
(18,0) circle [radius=3pt]
(17,0) circle [radius=3pt]
(16,0)circle [radius=3pt]
(14.7,0)circle [radius=3pt]

(6.5,1.8) circle [radius=3pt]
(5.5,1) circle [radius=3pt]
(7.5,1) circle [radius=3pt]
(8.3,0) circle [radius=3pt]
(7,0) circle [radius=3pt]
(6,0)circle [radius=3pt]
(4.7,0)circle [radius=3pt]
;
\node(ell8) at (6.6,8.5) {$w$};
\node(ell7) at (5,7.7) {\color{green!65!black}  \footnotesize $[0,20,26,28,30,39,41,43]$};
\node(ell8) at (11.4,8.5) {$x$};
\node(ell7) at (11.25,7.7) {\color{green!65!black}  \footnotesize $[35,$};
\node(ell7) at (11.85,7.7) {\color{red!85!black}  \footnotesize $37$};
\draw[red, thick] (11.6,7.5) rectangle (12.08,7.9);
\node(ell7) at (12.5,7.7) {\color{green!65!black}  \footnotesize $,45]$};

\node(ell8) at (12.2,6) {$y$};
\node(ell7) at (12.4,5.3) {\color{green!65!black}  \footnotesize $[44,45]$};

\node(ell4) at (9,7.3) {$v$};
\node(ell5) at (1.1,6.8) {\color{green!65!black}  \footnotesize $[1,3,5,7,9,11,13,15,17,$};
\node(ell5) at (3.55,6.8) {\color{red!85!black}  \footnotesize $19$};
\draw[red, thick] (3.35,6.6) rectangle (3.8,7);
\node(ell5) at (4.5,6.8) {\color{green!65!black}  \footnotesize $,21,23,$};
\node(ell5) at (5.45,6.8) {\color{red!85!black}  \footnotesize $25$};
\draw[red, thick] (5.2,6.6) rectangle (5.70,7);
\node(ell5) at (6.68,6.8) {\color{green!65!black}  \footnotesize $,30,32,34,$};
\node(ell5) at (7.9,6.8) {\color{red!85!black}  \footnotesize $38$};
\draw[red, thick] (7.65,6.6) rectangle (8.15,7);
\node(ell5) at (8.55,6.8) {\color{green!65!black}  \footnotesize $,44]$};

\node(ell4) at (9.3,5.5) {$u$};
\node(ell0) at (6.85,5.8) {\color{green!65!black}  \footnotesize $[33,35,$};
\draw[red, thick] (7.55,5.6) rectangle (8.05,6);
\node(ell0) at (7.8,5.8) {\color{red!85!black}  \footnotesize $37$};
\node(ell0) at (8.5,5.8) {\color{green!65!black}  \footnotesize $,43]$};

\node(ell4) at (8.6,4.2) {$z_i$};

\node(ell0) at (10.3,4.2) {\color{green!65!black}  \footnotesize $[17,18,20,22,$};
\node(ell0) at (11.87,4.2) {\color{red!85!black}  \footnotesize $24$};
\draw[red, thick] (11.6,4) rectangle (12.1,4.4);
\node(ell0) at (14.3,4.2) {\color{green!65!black}  \footnotesize $,27,29,31,33,38,40,42]$};

\node(ell3) at (13.1,3) {$\alpha_2$};
\node(ell0) at (15.1,3) {\color{green!65!black}  \footnotesize $[29,31]$};
\node(ell3) at (3.4,3) {$\alpha_1$};
\node(ell0) at (6.4,2.9) {\color{green!65!black}  \footnotesize $[8,10,12,14,16,25,27]$};

\node(ell2) at (17,1.8) {$\beta_4$};
\node(ell2) at (14.7,1.8) {\color{green!65!black} \small $[32,34,36,38]$};
\node(ell1) at (14.95,1) {{\color{green!65!black}\small  $[37]$}};
\node(ell1) at (16.6,1) {\color{green!65!black} \small $[39,41]$};
\node(ell0) at (14.7,-.4) {\color{green!65!black}  \small$[36]$};
\node(ell0) at (16.05,-.4) {\color{green!65!black} \small $[37]$};
\node(ell0) at (17,-.4) {\color{green!65!black} \small $[40]$};
\node(ell0) at (18,-.4) {\color{green!65!black}  \small $[42]$};

\node(ell2) at (12.2,1.8) {$\beta_3$};
\node(ell2) at (10.2,2) {\color{green!65!black} \small $[22,24,26,28]$};
\node(ell1) at (9.75,1.2) {{\color{green!65!black}\small  $[19,21]$}};
\node(ell1) at (13,1.2) {\color{green!65!black} \small $[24]$};
\node(ell0) at (9.7,-.4) {\color{green!65!black}  \small$[18]$};
\node(ell0) at (11.05,-.4) {\color{green!65!black} \small $[19]$};
\node(ell0) at (12.05,-.4) {\color{green!65!black} \small $[23]$};
\node(ell0) at (13.3,-.4) {\color{green!65!black}  \small $[25]$};

\node(ell2) at (5.9,1.8) {$\beta_2$};
\node(ell2) at (6.95,2) {\color{green!65!black} \small $[12]$};
\node(ell1) at (4.8,1.2) {{\color{green!65!black}\small  $[9,11]$}};
\node(ell1) at (8.2,1.2) {\color{green!65!black} \small $[13,15]$};
\node(ell0) at (4.7,-.4) {\color{green!65!black}  \small$[8]$};
\node(ell0) at (6.05,-.4) {\color{green!65!black} \small $[10]$};
\node(ell0) at (7.05,-.4) {\color{green!65!black} \small $[14]$};
\node(ell0) at (8.3,-.4) {\color{green!65!black}  \small $[16]$};

\node(ell2) at (2.1,1.8) {$\beta_1$};
\node(ell2) at (.7,2) {\color{green!65!black} \small $[3,5,7]$};
\node(ell1) at (.1,1.2) {{\color{green!65!black}\small  $[2]$}};
\node(ell1) at (3.1,1.2) {\color{green!65!black} \small $[4,6]$};
\node(ell0) at (-.3,-.4) {\color{green!65!black}  \small$[1]$};
\node(ell0) at (1.05,-.4) {\color{green!65!black} \small $[2]$};
\node(ell0) at (2.05,-.4) {\color{green!65!black} \small $[4]$};
\node(ell0) at (3.3,-.4) {\color{green!65!black}  \small $[6]$};

\end{tikzpicture}

\caption{Sub-protocol ${\cal P}_2$ applied to $T$. The green number(s) next to a vertex $q$ indicate the time-step(s) in ${\cal P}_2$ at which $q$ is treated and a red boxed number $t'$ next to $q$ indicates that $q$ could be red at time-step $t'$ after being yellow at time-step $t'-1$.}

\label{fig:proto2}
\end{figure}

\begin{figure}[htb]
\centering
\begin{tikzpicture}[scale=.83]
\filldraw[green!25!white]  (6,8.5) ellipse (1.54 and 0.54);
\filldraw[green!25!white]  (12.8,6) ellipse (1.54 and 0.54);
\filldraw[green!25!white]  (6,8.5) ellipse (1.54 and 0.54);

\draw[thick] (-.3,0)--(.5,1)--(1,0); 
\draw[thick] (2,0)--(2.5,1)--(3.3,0); 
\draw[thick] (.5,1)--(1.5,1.8)--(2.5,1); 

\draw[thick] (4.7,0)--(5.5,1)--(6,0); 
\draw[thick] (7,0)--(7.5,1)--(8.3,0); 
\draw[thick] (5.5,1)--(6.5,1.8)--(7.5,1); 

\draw[thick] (9.7,0)--(10.5,1)--(11,0); 
\draw[thick] (12,0)--(12.5,1)--(13.3,0); 
\draw[thick] (10.5,1)--(11.5,1.8)--(12.5,1); 

\draw[thick] (14.7,0)--(15.5,1)--(16,0); 
\draw[thick] (17,0)--(17.5,1)--(18,0); 
\draw[thick] (15.5,1)--(16.5,1.8)--(17.5,1); 

\draw[thick] (16.5,1.8)--(14,3)--(11.5,1.8);

\draw[thick] (6.5,1.8)--(4,3)--(1.5,1.8);

\draw[thick] (14,3)--(9,4)--(4,3);
\draw[thick] (9,4)--(9,7);
\draw[thick] (9,5.8)--(11.7,6);
\draw[thick] (7,8.5)--(9,7)--(11,9.5);

\draw (12,9.4) ellipse (1.5 and 0.5);
\draw (6,8.5) ellipse (1.5 and 0.5);
\draw (12.8,6) ellipse (1.5 and 0.5);

\node(ell9) at (4,8.6) {$T_w$};
\node(ell9) at (14,9.6) {$T_x$};
\node(ell9) at (14.8,6.1) {$T_y$};

\filldraw
(11,9.5) circle [radius=4pt]
(11.7,6) circle [radius=4pt]
(7,8.5) circle [radius=4pt]
(9,7) circle [radius=4pt]
(9,5.8) circle [radius=4pt]
(9,4) circle [radius=4pt];

\filldraw[green!65!black]
(11,9.5) circle [radius=3pt]
(11.7,6) circle [radius=3pt]
(7,8.5) circle [radius=3pt]
(9,7) circle [radius=3pt]
(9,5.8) circle [radius=3pt]
(9,4) circle [radius=3pt];
\filldraw[black]
(4,3) circle [radius=3pt]
(14,3) circle [radius=3pt]

(11.5,1.8) circle [radius=3pt]
(12.5,1) circle [radius=3pt]
(10.5,1) circle [radius=3pt]
(13.3,0) circle [radius=3pt]
(12,0) circle [radius=3pt]
(11,0)circle [radius=3pt]
(9.7,0)circle [radius=3pt]

(1.5,1.8) circle [radius=3pt]
(2.5,1) circle [radius=3pt]
(.5,1) circle [radius=3pt]
(3.3,0) circle [radius=3pt]
(2,0) circle [radius=3pt]
(1,0)circle [radius=3pt]
(-.3,0)circle [radius=3pt]
;
\filldraw[black]
(16.5,1.8) circle [radius=3pt]
(15.5,1) circle [radius=3pt]
(17.5,1) circle [radius=3pt]
(18,0) circle [radius=3pt]
(17,0) circle [radius=3pt]
(16,0)circle [radius=3pt]
(14.7,0)circle [radius=3pt]

(6.5,1.8) circle [radius=3pt]
(5.5,1) circle [radius=3pt]
(7.5,1) circle [radius=3pt]
(8.3,0) circle [radius=3pt]
(7,0) circle [radius=3pt]
(6,0)circle [radius=3pt]
(4.7,0)circle [radius=3pt]
;
\node(ell8) at (6.6,8.5) {$w$};
\node(ell8) at (11.4,9.5) {$x$};

\node(ell7) at (8.2,10.2) {\color{green!65!black}  \footnotesize $[0,2,4,6,8,$};
\node(ell7) at (9.5,10.21) {\color{red!85!black}  \footnotesize $10$};
\draw[red, thick] (9.29,10.02) rectangle (9.75,10.4);
\node(ell7) at (10.7,10.2) {\color{green!65!black}  \footnotesize $,13,15,17,$};
\node(ell7) at (11.9,10.22) {\color{red!85!black}  \footnotesize $19$};
\draw[red, thick] (11.7,10.02) rectangle (12.15,10.4);
\node(ell7) at (14.8,10.2) {\color{green!65!black}  \footnotesize $,27,29,31,33,35,37,39,41,43]$};

\node(ell8) at (12.2,6) {$y$};
\node(ell7) at (11.9,5.25) {\color{green!65!black}  \footnotesize $[10]$};

\node(ell4) at (9,7.35) {$v$};
\node(ell5) at (6.9,6.8) {\color{green!65!black}  \footnotesize $[10,12,20,22,24,26]$};

\node(ell4) at (9.3,5.5) {$u$};
\node(ell0) at (7.5,5.8) {\color{green!65!black}  \footnotesize $[$};
\draw[red, thick] (7.6,5.6) rectangle (8,6);
\node(ell0) at (7.8,5.8) {\color{red!85!black}  \footnotesize $9$};
\node(ell0) at (8.4,5.8) {\color{green!65!black}  \footnotesize $,11]$};

\node(ell4) at (8.6,4.2) {$z_i$};

\node(ell0) at (10,4.2) {\color{green!65!black}  \footnotesize $[0,2,4,6,$};
\node(ell0) at (11.07,4.2) {\color{red!85!black}  \footnotesize $8$};
\draw[red, thick] (10.9,4) rectangle (11.2,4.4);
\node(ell0) at (13.1,4.2) {\color{green!65!black}  \footnotesize $,12,14,16,21,23,25]$};
 
\node(ell3) at (14.7,3) {$\alpha_2$};
\node(ell0) at (8.4,2.9) {\color{green!65!black}  \footnotesize $[16,18,$};
\node(ell5) at (9.3,2.9) {\color{red!85!black}  \footnotesize $20$};
\node(ell0) at (11.45,2.9) {\color{green!65!black}  \footnotesize $,26,28,30,32,34,36]$};
\draw[red, thick] (9.05,2.7) rectangle (9.55,3.1);

\node(ell3) at (3.4,3.05) {$\alpha_1$};
\node(ell0) at (5,2.9) {\color{green!65!black}  \footnotesize $[14]$};

\node(ell2) at (17,1.8) {$\beta_4$};
\node(ell2) at (15,1.8) {\color{green!65!black} \small $[36,38,40]$};
\node(ell1) at (14.95,1) {{\color{green!65!black}\small  $[38]$}};
\node(ell1) at (16.6,1) {\color{green!65!black} \small $[40,42]$};
\node(ell0) at (14.7,-.4) {\color{green!65!black}  \small$[37]$};
\node(ell0) at (16.05,-.4) {\color{green!65!black} \small $[39]$};
\node(ell0) at (17,-.4) {\color{green!65!black} \small $[41]$};
\node(ell0) at (18,-.4) {\color{green!65!black}  \small $[42]$};

\node(ell2) at (10.9,1.8) {$\beta_3$};
\node(ell2) at (12.2,1.75) {\color{green!65!black} \small $[31]$};
\node(ell1) at (9.6,1) {{\color{green!65!black}\small  $[28,30]$}};
\node(ell1) at (11.6,1) {\color{green!65!black} \small $[32,34]$};
\node(ell0) at (9.7,-.4) {\color{green!65!black}  \small$[27]$};
\node(ell0) at (11.05,-.4) {\color{green!65!black} \small $[29]$};
\node(ell0) at (12.05,-.4) {\color{green!65!black} \small $[33]$};
\node(ell0) at (13.3,-.4) {\color{green!65!black}  \small $[35]$};

\node(ell2) at (5.9,1.8) {$\beta_2$};
\node(ell2) at (8,1.8) {\color{green!65!black} \small $[15,17,19,21]$};
\node(ell1) at (4.95,1) {{\color{green!65!black}\small  $[19]$}};
\node(ell1) at (6.6,1) {\color{green!65!black} \small $[22,24]$};
\node(ell0) at (4.7,-.4) {\color{green!65!black}  \small$[18]$};
\node(ell0) at (6.05,-.4) {\color{green!65!black} \small $[20]$};
\node(ell0) at (7.05,-.4) {\color{green!65!black} \small $[23]$};
\node(ell0) at (8.3,-.4) {\color{green!65!black}  \small $[25]$};

\node(ell2) at (1,1.85) {$\beta_1$};
\node(ell2) at (3.3,1.75) {\color{green!65!black} \small $[5,7,9,11,13]$};
\node(ell1) at (1.4,1) {{\color{green!65!black}\small  $[1,3,5]$}};
\node(ell1) at (2.9,1) {\color{green!65!black} \small $[8]$};
\node(ell0) at (-.3,-.4) {\color{green!65!black}  \small$[1]$};
\node(ell0) at (1.05,-.4) {\color{green!65!black} \small $[3]$};
\node(ell0) at (2.05,-.4) {\color{green!65!black} \small $[7]$};
\node(ell0) at (3.3,-.4) {\color{green!65!black}  \small $[9]$};

\end{tikzpicture}

\caption{Sub-protocol ${\cal P}_3$ applied to $T$. The green number(s) next to a vertex $q$ indicates the time-step(s) in ${\cal P}_3$ at which $q$ is treated and a red boxed number $t$ next to $q$ indicates that $q$ is red at time-step $t$ and was yellow at time-step $t-1$.
}

\label{fig:proto3}
\end{figure}

In the proof of Theorem~\ref{thm:bT7}, we refer to Figures~\ref{fig:proto1}, \ref{fig:proto2}, \ref{fig:proto3} as we apply sub-protocols $\mathcal{P}_1, \mathcal{P}_2, \mathcal{P}_3$ respectively.  It is also helpful to refer to Figure~\ref{fig:T7} to see how the sequence of eight sub-protocols fits together to clear $BT(7)$.

\begin{thm}\label{thm:bT7} 
The protocol $\mathcal{P}$, consisting of the sequence of 8 sub-protocols described in Table~\ref{table:8sub}, clears $BT(7)$ and has width $2$.\end{thm}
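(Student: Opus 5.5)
Width $2$ is immediate: each of the eight stages is one of $\mathcal{P}_1,\mathcal{P}_2,\mathcal{P}_3$, which have width $2$ by Lemmas~\ref{lem1}, \ref{lem2}, \ref{lem3}. Stages are run consecutively, so every time-step of $\mathcal{P}$ is a time-step of exactly one sub-protocol. The substance is clearing. I will induct over the stages of Table~\ref{table:8sub}. For each stage I will record an invariant: a set $G^{(k)}$ of vertices that are green at the last time-step of stage $k$, together with the vertices treated at that last step. I then check that $G^{(k)}$ supplies the initial-green hypotheses of the sub-protocol used at stage $k+1$, and that running it leaves $G^{(k)}$ green. At the end, $G^{(8)}$ must be $V(BT(7))$.

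The key observation is a \emph{locality principle}. A green vertex outside the part of $T$ touched by a sub-protocol can only turn yellow if it has a red neighbour. So if a cleared region $C$ has every boundary vertex either green for the whole stage or treated at least every other time-step, then no vertex of $C$ ever turns red. For $\mathcal{P}_1$, Lemma~\ref{lem1} says $u$ is treated at every odd step, including the last one. In stages 1--2 this keeps $u_1$ from turning red, and $u_1$ is the only link between $T_{z_1}$ and the rest of the tree. So $T_{z_1}$ stays green through stage 2. By Lemma~\ref{lem1}, at the end of stage 2 the sets $T_{z_1}\cup T_{z_2}\cup\{u_1\}$ are green, with $u_1$ just treated. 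Then $T_w=T_{u_1}$ in stage 3 is green at time $0$, which is exactly the hypothesis of Lemma~\ref{lem2}.

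Lemma~\ref{lem2} then gives $T_{u_1}$, $T_{z_3}$ and $u_2,v_1,r$ green at the end of stage 3, with $x=r$ and $y=z_4$ treated. One subtlety: the lemma treats $z_4$ only as the vertex $y$, not its whole subtree. I must check that Lemma~\ref{lem3} only needs $T_y$ in the sense of ``$y$ green'' at time $0$, or else reinterpret. In stage 4 the roles of $z_3$ and $z_4$ swap: $T_y=T_{z_3}$ was cleared, and $z_4=z_i$ is green. That matches Lemma~\ref{lem3}'s hypothesis that $T_w,T_y,u,v,x,z_i$ are green. After stage 4 the entire subtree $T_{v_1}$ is green, with $r$ treated at the final step.

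Stages 5--8 mirror this on the $v_2$ side, with $w=r$. Here I take $T_w$ to mean the component at $r$ away from $v_2$, namely $T_{v_1}\cup\{r\}$, and check it is green at the start of stage 5. Stage 6 then leaves $T_{u_3}$, $v_2$ and $r$ green, with $x=u_4$ treated last. Stages 7--8 use $\mathcal{P}_1$ with $u=u_4$ treated every other step. Here I need that $v_2$ and $r$ stay green, which holds because their neighbours $u_3$, $v_1$ and $u_4$ are never red.

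The main obstacle is this gluing. The lemmas certify only the vertices they name, so I must verify directly from the figures that no vertex outside the named region that was already green turns red. In particular I must show $v$ never stays red long enough to propagate into an adjacent cleared subtree, using the red-boxed times. For example, in $\mathcal{P}_2$, $v$ may go red, but $w$ is treated immediately afterwards, so $T_w$ is shielded.
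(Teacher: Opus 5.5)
Your proposal is correct and takes essentially the same route as the paper. Both arguments induct stage by stage over Table~\ref{table:8sub}, checking the hypotheses of Lemmas~\ref{lem1}--\ref{lem3} at each stage, and both use the fact that $u$ is treated every other time-step in $\mathcal{P}_1$ to keep already-cleared regions green in stages 2, 7 and 8. One simplification: you do not need to re-check the figures for $\mathcal{P}_2$ and $\mathcal{P}_3$, because Lemmas~\ref{lem2} and~\ref{lem3} already put $T_w$ (and $T_y$) in their conclusions, so the only real gluing happens in the $\mathcal{P}_1$ stages.
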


\begin{proof} Table~\ref{table:8sub} provides a list of the eight stages of protocol $\mathcal{P}$ and the values of $z_i,u,y,v,w$, and $x$ used in each.  After each stage, we keep track of vertices we know are green and ensure that the criteria for applying the appropriate lemma (\ref{lem1}, \ref{lem2}, \ref{lem3}) are met for the next stage. 

In stage 1, we apply $\mathcal{P}_1$ to $T$ shown in Figure~\ref{fig:proto1} 
with $z_i=z_1$, $u = u_1$ $y=z_2$. By Lemma~\ref{lem1}, when this stage is complete, the vertices in tree $T_{z_1}$ are green and vertex $u_1$ is treated at the last time-step.

In stage 2, we apply $\mathcal{P}_1$ to $T$ but this time with $z_i=z_2$, $u = u_1$ $y=z_1$.  By Lemma~\ref{lem1}, when this stage is complete, the vertices in tree $T_{z_2}$ are green and vertex $u_1$ is once again treated at the last time-step.

By Lemma~\ref{lem1}, we also know that $u_1$ is treated every other time-step during stage 2, so the vertices in $T_{z_1}$ remain green at the end of stage 2 (see Figure~\ref{fig:T7}).  Thus, at the end of stage 2, the vertices of $T_{z_1}$, $T_{z_2}$ and vertex $u_1$ are all green.

In stage 3, we apply $\mathcal{P}_2$ to $T$ with $z_i=z_3$, $u = u_2$ $y=z_4$, $v=v_1$, $w=u_1$, $x=r$.  Lemma~\ref{lem2} can be applied because the vertices of $T_w$ are green at the end of stage 2.  By Lemma~\ref{lem2}, when stage 3 is complete, the vertices in $T_{z_1}$, $T_{z_2}$, $T_{z_3}$ are green, along with those in $\{u_1,u_2,v_1,r,z_4\}$ and furthermore, $r$ and $z_4$ are treated at the last time-step.

In stage 4, we apply $\mathcal{P}_3$ to $T$ with $z_i=z_4$, $u=u_2$, $y=z_3$, $v=v_1$, $w=u_1$, $x=r$.  Note that the vertices that are treated at the last time-step in stage 2 (with sub-protocol $\mathcal{P}_2$) correspond to the vertices listed in sub-protocol $\mathcal{P}_3$ that are treated at time-step $0$.

Lemma~\ref{lem3} can be applied because the vertices of $T_w$, $T_y$ and the vertices $\{u,v,x,z_4\}$ are green at the end of stage 3 and vertices $r$ and $z_4$ were treated at the last time-step of stage 3.  Therefore, at the end of stage 4, the vertices in $T_{z_1}$, $T_{z_2}$, $T_{z_3}$, $T_{z_4}$ are green, along with those in $\{u_1,u_2,v_1,r\}$; and $r$ is treated at the last time-step of stage 4.

In stage 5, we apply $\mathcal{P}_2$ to $T$ with $z_i=z_5$, $u=u_3$, $v=v_2$, $y=z_6$, $w=r$, $x=u_4$.  Lemma~\ref{lem2} can be applied because $T_w$ is green at the end of stage 4. Thus at the end of stage 5, by Lemma~\ref{lem2}, the vertices in $T_{z_1}$, $T_{z_2}$, $T_{z_3}$, $T_{z_4}$, $T_{z_5}$ are green, along with those in $\{u_1,u_2,u_3,u_4,v_1,v_2,z_6\}$.  Furthermore, $u_4$ and $z_6$ are treated at the last time-step.

In stage 6, we apply $\mathcal{P}_3$ to $T$ with $z_i=z_6$, $u=u_3$, $y=z_5$, $v=v_2$, $w=r$, $x=u_4$.  Lemma~\ref{lem3} can be applied because $z_6$ and $u_4$ are treated at the last time-step in stage 5 and the vertices in $T_w$, $T_{z_5}$ are green, along with those in $\{z_6,u_3,v_2,u_4\}$.  By Lemma~\ref{lem3}, at the end of stage 6, the vertices in $T_{z_1}$, $T_{z_2}$, $T_{z_3}$, $T_{z_4}$, $T_{z_5},T_{z_6}$ are green, along with $\{u_1,u_2,u_3,u_4,v_1,v_2\}$.

In stage 7, we apply $\mathcal{P}_1$ to $T$ with $z_i=z_7$, $u=u_4$, $y=z_8$. By Lemma~\ref{lem1}, $u_4$ is treated every second time-step (including the last) and at the end of stage 7, the vertices in $T_{z_7}$ are green, along with all vertices that were green at the end of stage 6.

In stage 8, we apply $\mathcal{P}_1$ to $T$ with $z_i=z_8$, $u=u_4$, $y=z_7$.  By Lemma~\ref{lem1}, $u_4$ is treated every second time-step and at the end of stage 8, the vertices in $T_{v_8}$ are green, along with all the vertices that were green at the end of stage 7.

Since each sub-protocol has width 2, the full protocol $\mathcal{P}$ has width 2.
\end{proof}

The protocol given in the proof of Theorem~\ref{thm:bT7} contains vertices that become red after they have been treated. A protocol that clears a graph is {\it monotone} if for each vertex, once it is treated, it never becomes red again. Thus, our protocol for $BT(7)$ is non-monotone.

\section{Small vertex cuts resulting in many components}\label{sec:K4s}

In this section, we use path decompositions (that are not necessarily optimal)  to bound the treatment number of a graph.  We  construct a family of graphs, each with a cut-vertex, and show that the treatment number of graphs in the family depends on the number of components when the cut-vertex is removed. Additionally, in contrast to our use of Corollary~\ref{lower-bnd} in the proof of Theorem~\ref{thm:depth8}, we construct a related family of graphs that satisfies the $2$-bound but has arbitrarily large treatment number.

The proof of Theorem~\ref{pathwidth-bnd-thm}  gives a method for using path decompositions of a graph to construct protocols  that have an upper bound on their width and   also clear the graph. One way to construct a  path decomposition of a graph $H$ is to find a vertex cut $W$ of $H$, where the  components of $H-W$ are $C_1, C_2, \ldots, C_{\ell}$,  and use the path decomposition whose consecutive bags  are $W\cup C_1,  W\cup C_2, \ldots, W\cup C_{\ell}$.  We record this bound in the next proposition.

\begin{prop} \label{prop:upperbd-all}
If $W$ is a vertex cut of graph $H$   and  the components of $H-W$ are $C_1, C_2, \ldots, C_{\ell}$, then $\tau(H)\leq \left \lceil \frac{|W|}{2}\right \rceil+\max \{\tau(C_j): 1 \le j \le \ell\}$.  
\end{prop}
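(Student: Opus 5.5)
I will build an explicit protocol of width at most $\lceil |W|/2\rceil + m$, where $m=\max_j \tau(C_j)$. The strategy mirrors the proof of Theorem~\ref{pathwidth-bnd-thm}, in which a vertex held in a bag is kept green by treating it every other time-step. The key observation is that treating a fixed set $W$ at alternating time-steps costs only $\lceil |W|/2\rceil$ treatments per step.

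First I split $W$ into two halves $W_{odd}$ and $W_{even}$ with $|W_{odd}|,|W_{even}|\le \lceil |W|/2\rceil$.

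\textbf{Step 1: make $W$ green.} Over the first two time-steps I treat $W_{odd}$ at step $1$ and $W_{even}$ at step $2$. From then on I treat $W_{odd}$ at every odd step and $W_{even}$ at every even step. Each vertex $w\in W$ is then treated at least every second step, so it is green at its treatment step. At the following step it is at worst yellow, and it is retreated before it can become red. Hence, from time-step $2$ on, no vertex of $W$ is ever red. This holds regardless of the colors of its neighbors in the components $C_j$.

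\textbf{Step 2: clear the components in order.} I run, consecutively, a protocol $J_j$ of width $\tau(C_j)\le m$ that clears $C_j$ when $C_j$ is considered as a graph on its own. This is done for $j=1,\dots,\ell$, each run beginning after the previous one has finished. At every time-step the treated set is the union of one half of $W$ and the current set of $J_j$, so the width is at most $\lceil|W|/2\rceil+m$.

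Two properties must be checked for this to work.

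\begin{enumerate}
\item While $J_j$ runs, the dynamics on $C_j$ inside $H$ coincide with the dynamics on $C_j$ alone. The neighbors of $C_j$ outside $C_j$ all lie in $W$, because $W$ separates the components. Those vertices are never red, and only red neighbors can turn a green vertex yellow. So an external neighbor never causes a vertex of $C_j$ to lose green status, and $C_j$ ends with every vertex green.
\item After $C_j$ is finished, its vertices stay green with no further treatment. Their neighbors are in $C_j$ or $W$. The $W$ neighbors are never red, and by induction no vertex of $C_j$ becomes red again, since a green vertex changes color only if some neighbor is red.
\end{enumerate}

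The delicate point is the timing interaction at the boundary of each run. At any step, a vertex of $W$ may be yellow rather than green, and a yellow vertex is not red, so it does not harm its neighbors in $C_j$. Conversely, the vertices of $C_j$ may still be red, and they can repeatedly turn a vertex of $W$ yellow. That vertex nevertheless returns to green at each of its treatment steps, so it never reaches red.

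To make this rigorous, I will check directly against Algorithm~\ref{alg-treatment} that a vertex treated at steps $t$ and $t+2$ cannot lie in $R_{t+1}$ or $R_{t+2}$. It is green at $t$ and at $t+2$, and at $t+1$ it is either green or in $Y_{t+1}$, never in $R_{t+1}$. Finally, at the last time-step every vertex of $W$ is either treated or green-or-yellow. To avoid ending with a yellow vertex of $W$, I append one final step treating all of $W$, or simply the half that was not treated at the previous step. This extra step also has width at most $\lceil |W|/2\rceil$, so the overall width bound is unchanged.
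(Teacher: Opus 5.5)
Your proposal is correct and is the paper's construction: run the component protocols consecutively while treating alternate halves of $W$. Your checks supply what the paper's two-line proof leaves implicit, namely that no vertex of $W$ is ever red, so each $C_j$ evolves exactly as in isolation and stays green afterward. The appended final step is harmless but not needed, since once the protocol for $C_\ell$ finishes no vertex is red, so the half of $W$ treated at the preceding step is already green.
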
 

\begin{proof}

Let $P_j$ be a protocol that clears $C_j$ and has width $\tau(C_j)$ for $1\leq j\leq \ell$. 
The following protocol clears $H$ and has the desired width:  follow $P_1, P_2, \ldots, P_{\ell}$ consecutively and at each time-step, in addition to the vertices treated in  these protocols, also 
 treat alternate halves of the vertices in $W$.  
\end{proof}

In the special case when $|W|=1$ and the number of vertices treated  per time-step  is less than the maximum at every other time-step in the protocols, we can reduce the upper bound by 1. 

\begin{cor} \label{prop:upperbd-odd}
Let $H$ be a graph with a cut-vertex $w$, and let the components of $H-w$ be $C_1, C_2, \ldots, C_{\ell}$.  If  $k=\max\{\tau(C_j): 1 \le j \le \ell\}$ and   for $1\leq j\leq \ell$ component $C_j$ has a protocol $P_j$ that  treats $k-1$ or fewer vertices  per time-step at alternate time-steps, then $\tau(H)\leq k$.  
\end{cor}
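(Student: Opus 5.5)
Since $w$ is a cut vertex, I would build a protocol for $H$ by running the component protocols in sequence and treating $w$ at the time-steps where the component protocol has room for it. Concretely, I first normalize each $P_j$ so that the time-steps at which it treats at most $k-1$ vertices are exactly the odd-numbered (or exactly the even-numbered) steps of $P_j$. If the light steps fall on the wrong parity, I prepend one empty step, which is harmless: every vertex of $C_j$ stays in its current state for one more step, apart from green vertices possibly decaying. I would check that the proof of Proposition~\ref{prop:upperbd-all} tolerates this, i.e.\ that a protocol for $C_j$ started from an arbitrary coloring still clears $C_j$. Since it clears from all-red and red is the worst state, a monotonicity remark is needed here: the set of green vertices under any starting coloring contains the set obtained from all-red. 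I would state this as a comparison lemma proved by induction on $t$ using the update rules of Algorithm~\ref{alg-treatment}.

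The protocol for $H$ is the concatenation $P_1, P_2, \ldots, P_\ell$, with $w$ added to every light step. The width is then at most $k$. I would also insert one light step between consecutive blocks if needed, so that $w$ is treated at least every second time-step throughout and in particular at the final time-step.

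For correctness I argue as in the proof of Proposition~\ref{prop:upperbd-all}. Because $w$ is treated every other step, $w$ is never red after its first treatment. If $w$ is treated at step $t$, then at step $t+1$ it is green or yellow, and at step $t+2$ it is green again. Thus no vertex of any $C_j$ ever acquires a red neighbor outside $C_j$ after the beginning. The vertices of $C_j$ have neighbors only in $C_j\cup\{w\}$, so during block $j$ the restriction of the process to $C_j$ evolves exactly as $P_j$ run on $C_j$, by the comparison lemma. Hence $C_j$ is entirely green at the end of block $j$.

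The remaining point is that $C_j$ stays green during the later blocks. A green vertex of $C_j$ turns yellow only if it has a red neighbor, and all its neighbors lie in $C_j\cup\{w\}$, neither of which contains a red vertex. Finally, $w$ is treated at the last step, so it is green at the end, and $H$ is cleared.

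The main obstacle I expect is the bookkeeping at block boundaries. The parity of the light steps must remain aligned across blocks so that $w$ is never left untreated for two consecutive steps. I would handle this with the padding by empty or single-vertex steps described above, checking that padding never creates a step of size exceeding $k$ and never breaks the "every other step" invariant for $w$.
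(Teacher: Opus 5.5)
Your proposal is correct and is essentially the paper's proof: concatenate $P_1,\ldots,P_\ell$, add $w$ to the light steps, and insert a step treating only $w$ at block boundaries (these are your prepended empty steps) so that $w$ is never left untreated at two consecutive time-steps. The comparison lemma you plan is true but not needed: no vertex of $C_j$ is treated before block $j$, so $C_j$ is entirely red when block $j$ starts, and since $w$ is never red once it has first been treated, the process on $C_j$ coincides exactly with $P_j$ run on $C_j$.
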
 

\begin{proof}
Similar to the proof of Proposition~\ref{prop:upperbd-all}, let $P_j$ be a protocol that clears $C_j$ and has width $\tau(C_j)$ for $1\leq j\leq \ell$. We clear $H$ by following $P_1, P_2, \ldots, P_{\ell}$ consecutively, and  treating $w$ on alternate time-steps when the protocol $P_j$ treats $k-1$ or fewer vertices per time-step. If $k$ vertices are treated in the last time-step of $P_j$ and the first of $P_{j+1}$ for any $j$, then add an intermediate time-step in which only $w$ is treated.
\end{proof}

In the next definition we introduce the family of  graphs $H_{m,n}$.  Each graph has a central cut-vertex $r$ and when $r$ is deleted, the resulting graph consists of $m$ copies of the complete graph $K_n$.  The advantage  of using complete graphs in the construction is that the presence of any red vertex in a complete graph immediately changes the color of all non-treated green vertices to yellow.

\begin{defn}\label{Hmn} \rm The graph  $H_{m,n}$ consists of a root $r$ and $m$ copies of a complete graph $K_n$ where  exactly one vertex in each copy is adjacent to $r$. We label the vertices in the copies of $K_n$ as $V^1, V^2, \ldots, V^m$ and  for each $i$,  we let vertex $x_i$ be the vertex in $V^i$ that is adjacent to the root $r$. 

\end{defn}

The graphs $H_{m,n}$ are a generalization of $K_{m,1}$ where a complete graph replaces each leaf. Since $K_{m,1}$ is a caterpillar,  we know $\tau(K_{m,1})=1$ for any value of $m$ by \cite[Theorem 4.3]{FirstTreatPaper}. Thus, we might expect that the size of $m$ would not affect the treatment number. If the clique size $n$ is odd, $\tau(H_{m,n})$ remains constant when $n$ is fixed and $m$ is varied. In contrast, we prove that when $n$ is even, there is a threshold for $m$: when $m$ meets (or exceeds) the threshold, the treatment number of the graph changes. Thus, for fixed $n=2q$, the treatment number of the graph $H_{m,2q}$ is determined by $m$ and not by the pathwidth of $H_{m,2q}$. 

Before the main theorem, we demonstrate a protocol for $H_{6,4}$ by listing the treatment sets in Table~\ref{table:6K4s}. For $1\leq i\leq 6$, we label $V^i = \{x_i,y_i,z_i,w_i: 1 \le i \le 6\}$ where vertices $x_i,y_i, z_i, w_i$ induce a clique. The action of the protocol is described in Table~\ref{Table:6K4s2} with 22 time-steps. The blank entries in Table~\ref{Table:6K4s2} indicate that a vertex is red. For instance, vertex $r$ is red at time-steps $1,2,5,9$, and $10$.  It is straightforward to verify that the entries in Table~\ref{Table:6K4s2} are correct and therefore our width 2 protocol clears $H_{6,4}$ and $\tau(H_{6,4}) \leq 2$. Since $H_{6,4}$ contains $K_4$, by Proposition~\ref{subgraph}, we know $\tau(H_{6,4})=2$.

\begin{table}[htbp] \small 
\[ 
\setlength\arraycolsep{3.3pt}
\begin{array}{@{}c@{}||c|c|c|c|c|c|c|c|c|c|c|c|c|c|c|c|c|c|c|c|c|c}
t &{\bf 1} & {\bf 2} & 3 & {\bf 4} & {\bf 5} & 6 & 7 & {\bf 8} & {\bf 9} & 10 &11 & 12 & 13 & {\bf 14} & {\bf 15} & 16 & 17 & {\bf 18}& {\bf 19} & 20 & {\bf 21} & {\bf 22}\\ \hline
\  & z_1 & x_1 & r & z_2 & x_2 & r & r &         z_3 & x_3 & x_1 & r &r & x_5 & x_4 & z_4 & r &    x_5 & z_5 & x_5 & r & z_6 & x_6\\
A_t~ & w_1 & y_1 & x_2 & w_2 & y_2  & x_1 & x_3 & w_3 & y_3 & x_2 & x_3 &x_4& x_6 & y_4 & w_4 &  & x_6 & w_5 & y_5 & x_6 & w_6 & y_6\\
\end{array}
\]
\caption{A  width $2$ protocol that clears the graph $H_{6,4}$. The columns shown in bold indicate consecutive time-steps in which vertices of the same $K_4$ clique are treated.}

\label{table:6K4s}
\end{table}

Our protocol for $H_{6,4}$  has the property that for each $i: 1 \le i \le 6$, there are two consecutive time-steps in which all vertices treated are in $V^i$, and only in $V^i$. We will show in Lemma~\ref{two-conseq-lem}  that this is always the case for a width $q$ protocol that clears $H_{m,2q}$ where $q \ge 2$ and $m\geq 2q+3$. 
Observe in Table~\ref{Table:6K4s2} that vertices $r$, $x_5$, and $x_6$ turn red after they have been treated, for example, $r$ turns red at time-step $5$ after being treated at time-step $3$. Thus, our protocol for $H_{6,4}$ is not monotone, and this will also be the case for our protocols for $H_{m,2q}$ in the next theorem.

\begin{table}[htb] \small 
\[
\setlength\arraycolsep{3.5pt}
\begin{array}{@{}c@{}||c|c|c|c|c|c|c|c|c|c|c|c|c|c|c|c|c|c|c|c|c|c|c}
~ & 1 & 2 & 3 & 4 & 5 & 6 & 7 & 8 & 9 & 10 & 11 & 12 & 13 & 14 & 15 & 16 & 17 & 18 & 19 & 20 & 21 & 22\\
\hline 
r & ~ & ~ & \g & \y & ~ & \g & \g & \y & ~ & ~ & \g & \g & \g & \g & \y & \g & \g & \g & \y & \g & \g & \g \\
w_1, z_1~ & \g & \g & \g & \g & \g & \g & \g & \g & \g & \g & \g & \g & \g & \g & \g & \g & \g & \g & \g & \g & \g & \g \\
x_1 & ~ & \g & \g & \g & \y & \g & \g & \g & \y & \g & \g & \g & \g & \g & \g & \g & \g & \g & \g & \g & \g & \g\\
y_1 & ~ & \g & \g & \g & \g & \g & \g & \g & \g & \g & \g & \g & \g & \g & \g & \g & \g & \g & \g & \g & \g & \g \\

w_2, z_2 & ~ & ~ & ~ & \g & \g & \g & \g & \g & \g & \g & \g & \g & \g & \g & \g & \g & \g & \g & \g & \g & \g & \g\\
x_2 & ~ & ~ & \g & \y & \g & \g & \g & \g & \y & \g & \g & \g & \g & \g & \g & \g & \g & \g & \g & \g & \g & \g\\
y_2 & ~ & ~  &~ & ~ & \g& \g &\g &\g & \g& \g& \g & \g & \g & \g & \g & \g & \g & \g & \g & \g & \g & \g\\

w_3, z_3 & ~ & ~ & ~ & ~ & ~ & ~ & ~ & \g & \g & \g & \g & \g & \g & \g & \g & \g & \g & \g & \g & \g & \g & \g\\
x_3 & ~ & ~ & ~ & ~ & ~ & ~ & \g & \y &\g & \y & \g & \g & \g & \g & \g & \g & \g & \g & \g & \g & \g & \g\\
y_3 & ~ & ~  &~ &~&~&~&~&~&\g& \g & \g & \g & \g & \g & \g & \g & \g & \g & \g & \g & \g & \g\\

w_4, z_4 & ~ & ~ & ~ & ~ & ~ & ~ & ~ & ~ & ~ & ~ & ~ & ~ &~ & ~ & \g &  \g& \g& \g& \g& \g& \g& \g\\
x_4 & ~ & ~ & ~ & ~ & ~ & ~ & ~ & ~ & ~ & ~ & ~  & \g & \y & \g & \g & \g& \g& \g& \g& \g& \g& \g \\
y_4 &~ & ~ & ~ & ~ & ~ & ~ & ~ & ~ & ~ & ~ & ~ & ~ &~ & \g & \g &  \g& \g& \g& \g& \g& \g& \g\\

w_5, z_5 & ~ & ~ & ~ & ~ & ~ & ~ & ~ & ~ & ~ & ~ & ~ & ~  & ~& ~& ~ &~ & ~& \g& \g& \g& \g& \g \\
x_5 & ~ & ~ & ~ & ~ & ~ & ~ & ~ & ~ & ~ & ~ & ~ & ~ & \g & \y & ~ & ~&\g & \y& \g& \g& \g& \g\\
y_5 &~ & ~ & ~ & ~ & ~ & ~ & ~ & ~ & ~ & ~ & ~ & ~ &~ &~& ~ &~&~&~& \g& \g& \g& \g\\

w_6, z_6 & ~ & ~ & ~ & ~ & ~ & ~ & ~ & ~ & ~ & ~ & ~ & ~ &~ &~&~&~&~&~&~&~& \g & \g\\
x_6 & ~ & ~ & ~ & ~ & ~ & ~ & ~ & ~ & ~ & ~ & ~ & ~ & \g & \y &~&~& \g& \y &~&\g & \y & \g \\
y_6 & ~ & ~ & ~ & ~ & ~ & ~ & ~ & ~ & ~ & ~ & ~& ~ & ~ & ~&~& ~&~&~& ~&~&~& \g\\
\hline
|G_t| & 2 & 4 & 6 & 6 & 7 & 9 & 10 & 10 & 10 & 11 & 13 & 14 & 15 & 15 & 16 & 17 & 19 & 19 & 20 & 22 & 23 & 25 \\
\end{array}
\]

\caption{The entries in this table show which vertices are green (\g) and yellow (\y) at each time-step for the protocol  given in Table~\ref{table:6K4s}.  Vertices $w_i,z_i$ are grouped together as they are always treated together, and blank entries indicate that a vertex is red. }

 \label{Table:6K4s2}
\end{table}

We begin with an upper bound for $\tau(H_{m,n})$ and the computation of $\tau(H_{m,n})$ for $n$ odd. 

\begin{prop} \label{prop:upperbdK}
For any positive integers $m,n$, we have $\tau(H_{m,n})\leq \left \lceil \frac{n+1}{2}\right \rceil $. If $n$ is odd, then $\tau(H_{m,n})=\frac{n+1}{2} $.
\end{prop}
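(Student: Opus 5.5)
The upper bound comes from Proposition~\ref{prop:upperbd-all} with $W=\{r\}$, the lower bound from Proposition~\ref{subgraph} applied to the clique $K_n$.

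\textbf{Upper bound.} Take the vertex cut $W=\{r\}$. The components of $H_{m,n}-r$ are the $m$ cliques $V^1,\dots,V^m$, each isomorphic to $K_n$. Proposition~\ref{prop:upperbd-all} then gives $\tau(H_{m,n})\le 1+\tau(K_n)$, so I first need $\tau(K_n)$.

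Since $pw(K_n)=n-1$, Theorem~\ref{pathwidth-bnd-thm} gives $\tau(K_n)\le\lceil n/2\rceil$. This clears $K_n$ by treating two halves in consecutive steps: after the first step the treated half is only yellow at the second step, and the second step treats the other half, so nothing red remains.

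This crude bound is too weak for even $n$, since it only yields $\tau(H_{m,n})\le n/2+1$, while $\lceil (n+1)/2\rceil=n/2+1$ as well. For odd $n$ it gives $(n+1)/2+1$, one too many, so I sharpen the argument by hand. For each $i$ in turn, I treat $V^i$ in two consecutive steps: first $\lceil n/2\rceil$ of its vertices including $x_i$, then the remaining $\lfloor n/2\rfloor$ together with $r$. This uses at most $\lceil (n+1)/2\rceil$ vertices per step when $n$ is odd, since $\lfloor n/2\rfloor+1=(n+1)/2=\lceil n/2\rceil$. For even $n$, treat $x_i$ in the second step instead, which gives halves of sizes $n/2$ and $n/2+1$.

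I must also keep already-cleared cliques green while $r$ is red or yellow. Cliques $V^j$ with $j<i$ are threatened only via $x_j$, whose sole outside neighbour is $r$. If $r$ is green at the start of a clique's treatment and is re-treated at least every other step, it never turns red, and earlier $x_j$'s remain green. So I will re-treat $r$ at alternate steps, as in Corollary~\ref{prop:upperbd-odd}. In the two-step schedule above $r$ is treated in every second step, which suffices: $r$ goes yellow at the first step of the next clique and is re-treated at its second step.

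The first clique needs care, because $r$ starts red and $x_1$ is then yellow at step 2. I handle this by treating $r$ together with $x_1$'s half and arranging the ordering so that $x_1$ is green at step 2.

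\textbf{Lower bound for odd $n$.} Proposition~\ref{subgraph} gives $\tau(H_{m,n})\ge\tau(K_n)$, so I need $\tau(K_n)\ge (n+1)/2$ when $n$ is odd. Consider the last time-step $N$, when all $n$ vertices are green. If some vertex was untreated at step $N$, it was green at step $N-1$ with no red neighbour at step $N$. Hence the vertices treated at step $N$ must cover every vertex not green at $N-1$. Moreover, the vertices green at $N-1$ survive only if no vertex is red at step $N$.

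Tracing back: for green vertices to persist in a clique, every vertex must be green or yellow at step $N-1$ and treated at step $N$ if non-green. Yellow at $N-1$ means it was green at $N-2$ and some vertex was red at $N-1$. I expect the key claim to be that $|A_{N-1}|+|A_N|\ge n$, which forces $\max|A_t|\ge\lceil n/2\rceil=(n+1)/2$ for odd $n$. Alternatively this follows from the $\gamma$-bound (Corollary~\ref{lower-bnd}): in $K_n$ any proper nonempty $S$ has $|\bnd(S)|=n-|S|$. With $p=n-1$ and $\gamma=(n-1)/2$ we need $|S|\le n-1$ and $n-|S|\le n-2$, so $|S|\ge 2$; it is not obvious this fails, so I will choose $p$ small instead. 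Taking $p=\gamma+1$, we need $|S|\in[2,\gamma+1]$ with $n-|S|\le 2\gamma-1=n-2$, which holds, so the $\gamma$-bound does not immediately fail.

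I therefore rely on the direct last-two-steps argument, which I expect to be the main obstacle. Its case analysis must show that every vertex is treated in step $N-1$ or $N$, by arguing that any vertex not treated in either step was red at some point after every other vertex... I will try to prove this via the last step at which some vertex was red.
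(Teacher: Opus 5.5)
Your upper bound is correct and is essentially the paper's argument. For even $n$, Proposition~\ref{prop:upperbd-all} with $W=\{r\}$ already gives $1+\lceil n/2\rceil=\lceil (n+1)/2\rceil$. Your sentence calling this bound ``too weak for even $n$'' is a slip: the bound is exactly what is needed there. For odd $n$, your two-steps-per-clique schedule, with $r$ added to the smaller half, is the schedule behind Corollary~\ref{prop:upperbd-odd}.

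Your worry about the first clique is unfounded. Since $r\in A_2$, we have $r\notin R_2$, so $x_1$ has no red neighbour at step $2$ and stays green. Your proposed remedy of treating $r$ together with $x_1$'s half would put $(n+3)/2$ vertices into one step. Drop it and keep your original schedule.

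The genuine gap is the lower bound for odd $n$. The paper simply uses the known value $\tau(K_n)=\lceil n/2\rceil$ together with Proposition~\ref{subgraph}. You instead try to prove $\tau(K_n)\ge\lceil n/2\rceil$ and stop at an unfinished plan. Moreover, the key claim you formulate, $|A_{N-1}|+|A_N|\ge n$, is false: appending empty treatment sets to a clearing protocol gives another clearing protocol, since with no red vertex nothing changes, so the last two sets can be empty. You correctly observed that the $\gamma$-bound cannot help here, so that detour can be deleted.

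The idea you gesture at in your last sentence does work once you anchor it at the right time-step.
\begin{itemize}
\item Let $t$ be the last time-step with $R_t\neq\emptyset$. It exists and satisfies $0\le t<N$, since $R_0=V$ and $R_N=\emptyset$. Pick $v\in R_t$.
\item Any $u\in G_{t-1}$ satisfies $u\neq v$, because $R_t\subseteq R_{t-1}\cup Y_{t-1}$ is disjoint from $G_{t-1}$. In $K_n$, such a $u$ is adjacent to the red vertex $v$.
\item Hence no previously green vertex survives untreated, so $G_t=A_t$ and $R_t\cup Y_t=V\setminus A_t$ (take $A_0=\emptyset$ if $t=0$).
\item Then $R_{t+1}=(V\setminus A_t)\setminus A_{t+1}$, which is empty by the choice of $t$. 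So $A_t\cup A_{t+1}=V$ and $|A_t|+|A_{t+1}|\ge n$.
\item Thus every clearing protocol for $K_n$ has width at least $\lceil n/2\rceil=(n+1)/2$ for odd $n$.
\end{itemize}
With this argument, or simply a citation of $\tau(K_n)=\lceil n/2\rceil$, your proof is complete.
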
 
\begin{proof} Since the root $r$ is a cut-vertex in $H_{m,n}$ and $\tau(K_{n})=\lceil \frac{n}{2}\rceil$, by Proposition~\ref{prop:upperbd-all}, $\tau( H_{m,n})\leq 1+\lceil \frac{n}{2}\rceil$. If $n$ is even, then $1+\lceil \frac{n}{2}\rceil=\left \lceil \frac{n+1}{2}\right \rceil $. 

If $n$ is odd, then $ \lceil \frac{n}{2}\rceil = \left \lceil \frac{n+1}{2}\right \rceil $, 
and we can choose to treat $\frac{n-1}{2}$ vertices in alternate time-steps, with $\frac{n+1}{2}$ in the other time-steps. By Corollary~\ref{prop:upperbd-odd},  we know $\tau(H_{m,n})\leq   \frac{n+1}{2}$. Since $H_{m,n}$ contains $K_n$, by Proposition~\ref{subgraph}, $\tau( H_{m,n})\geq \lceil  \frac{n}{2}\rceil$, hence $\tau(H_{m,n})=\frac{n+1}{2}$. 
\end{proof}

Next we prove two lemmas that will be used in the computation of $\tau(H_{m,n})$ where $n$ is even. We focus on the case where $m=2q+3$. If $m<2q+3$, a protocol similar to that described above for $H_{6,4}$ can be constructed to show that the treatment number is $q$. The main theorem of this section will prove that $\tau(H_{m,2q})=q+1$ if $m\geq 2q+3$.

\begin{lemma}

Let $(A_1, A_2, \ldots, A_N)$ be  a protocol  of width $q$ for the graph $H_{2q+3,2q}$, where $q \ge 2$. 
    At any time-step $t$, for any $i$,  either $|V^i\cap G_t|\leq q$, or  $|V^i\cap G_t|=2q$, or  $|V^i\cap G_t|=2q-1$, and in the last case,    $x_i$ is yellow.  
    \label{num-green-lem}
\end{lemma}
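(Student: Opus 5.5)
The proof works inside a single clique $V^i$ at a fixed time-step $t$. The main tool is the update rule for $G_t$ in Algorithm~\ref{alg-treatment}. A vertex is in $G_t$ only if it is in $A_t$, or it was in $G_{t-1}$ and has no neighbor in $R_t$. I split into two cases according to whether $V^i$ contains a vertex of $R_t$.

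\emph{Case 1: some vertex $a\in V^i$ is in $R_t$.} Since $V^i$ induces a clique, every other vertex of $V^i$ is adjacent to $a$. So no vertex of $V^i$ can be green at time $t$ through the second clause of the update rule. The vertex $a$ itself is red, hence not green. Therefore $V^i\cap G_t\subseteq A_t$. Because the protocol has width $q$, this gives $|V^i\cap G_t|\le |A_t|\le q$.

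\emph{Case 2: $V^i\cap R_t=\emptyset$.} Every vertex of $V^i$ is then green or yellow at time $t$, since the three sets $G_t,Y_t,R_t$ partition $V(H_{2q+3,2q})$. By the update rule for $Y_t$, a yellow vertex at time $t$ must have a neighbor in $R_t$. A vertex of $V^i$ other than $x_i$ has all its neighbors in $V^i$, and none of these is red. So every vertex of $V^i-\{x_i\}$ is green. The only vertex that can fail to be green is $x_i$, whose extra neighbor is $r$, and in that case $x_i$ is yellow. Thus either $|V^i\cap G_t|=2q$, or $|V^i\cap G_t|=2q-1$ with $x_i$ yellow.

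The two cases are exhaustive, so the lemma follows. The only step that needs care is confirming that $G_t$, $Y_t$, $R_t$ partition the vertex set at each time-step. This follows by induction from the three assignments in Algorithm~\ref{alg-treatment}: $A_t$ lands in $G_t$, the remaining vertices of $R_{t-1}\cup Y_{t-1}$ land in $R_t$, and each remaining vertex of $G_{t-1}$ lands in exactly one of $Y_t$ or $G_t$. The partition is what justifies the phrase ``green or yellow'' in Case 2. The hypotheses $q\ge 2$ and $m=2q+3$ are not needed for this argument.
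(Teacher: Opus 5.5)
Your proof is correct and uses the same two facts as the paper's: a red vertex anywhere in the clique forces $V^i\cap G_t\subseteq A_t$, and a yellow vertex needs a red neighbor, which only $x_i$ can have outside $V^i$. Splitting on whether $V^i$ meets $R_t$, rather than on the value of $k$ as the paper does, makes the case analysis exhaustive from the start, and your partition check makes explicit what the paper uses tacitly.

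One correction to your last sentence. Lemma~\ref{two-conseq-lem} uses the lemma in the form ``exactly $2q-1$ green vertices forces $x_i$ to be yellow.'' That form needs your Case~1 bound to satisfy $q<2q-1$, which is exactly where $q\ge 2$ is used. For $q=1$, treating only the vertex of $V^i$ other than $x_i$ at time $1$ leaves exactly $2q-1=1$ green vertex with $x_i$ red.
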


\begin{proof}
Let  $|V^i\cap G_t|=k$, and first suppose that $k\leq 2q-2$. Then $V^i$ contains a non-green vertex $y_i$ at time-step $t$ with $y_i \neq x_i$. If $y_i$ is red at time-step $t$, then $V^i\cap G_t=V^i\cap A_t$, and thus 
$|V^i\cap G_t| \le |A_t| \le q$, so $k \le q$.
 If $y_i$ is yellow at time-step $t$, then there is a red vertex $z_i\in V^i$, because $y_i$ is not adjacent to the root $r$. Then  at time-step $t$,  all other vertices  in $V^i - A_i$  turn red because they are adjacent to vertex $z_i$, and again, $V^i\cap G_t=V^i\cap A_t$,   so  as before, $k\leq q$. 

Now suppose that $k=2q-1$. By similar reasoning, every vertex in $V^i$ that is not $x_i$ must be green. If $x_i$ is red, then again, $V^i\cap G_t=V^i\cap A_t$, resulting in $k\leq q$, a contradiction. Hence $x_i$ is yellow as desired.
The remaining case, $k=2q$, is our third possibility.
\end{proof}

\begin{lemma}
If $(A_1, A_2, \ldots, A_N)$ is  a  width $q$ protocol  for $H_{2q+3,2q}$, where $q \ge 2$,  and $V^i$ has fewer than $2q-1$ green vertices at time-step $t$ and at least $2q-1$ green vertices at time-step $t+1 $  
for some $i$,  then $A_t \cup A_{t+1} \subseteq V^i$. 
\label{two-conseq-lem}
\end{lemma}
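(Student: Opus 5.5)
The plan is to bound $|V^i\cap G_{t+1}|$ from above by $|V^i\cap A_t|+|V^i\cap A_{t+1}|$. Since each treatment set has at most $q$ vertices, a green count of at least $2q-1$ at time-step $t+1$ should then force both $A_t$ and $A_{t+1}$ to consist of exactly $q$ vertices of $V^i$. This gives $A_t\cup A_{t+1}\subseteq V^i$.

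\textbf{Step 1.} At time-step $t$ we have $|V^i\cap G_t|\le 2q-2$. I will reuse the argument in the proof of Lemma~\ref{num-green-lem}, which gives more than the bound $\le q$. There is a non-green vertex $y_i\ne x_i$ in $V^i$. Either $y_i$ is red, or $y_i$ is yellow and then some $z_i\in V^i$ is red. In both cases every vertex of $V^i$ that is not treated at time-step $t$ has a red neighbor in the clique and is not green. Hence $V^i\cap G_t=V^i\cap A_t$.

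\textbf{Step 2.} By Algorithm~\ref{alg-treatment}, $G_{t+1}\subseteq A_{t+1}\cup G_t$. Combining this with Step 1 gives
\[ V^i\cap G_{t+1}\subseteq (V^i\cap A_{t+1})\cup\big((V^i\cap A_t)-A_{t+1}\big). \]
Therefore $|V^i\cap G_{t+1}|\le |V^i\cap A_{t+1}|+|V^i\cap A_t|$, and each term on the right is at most $q$.

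\textbf{Step 3.} By Lemma~\ref{num-green-lem}, $|V^i\cap G_{t+1}|$ equals either $2q$ or $2q-1$.
\begin{itemize}
\item If it equals $2q$, Step 2 gives $|V^i\cap A_t|+|V^i\cap A_{t+1}|\ge 2q$. So both terms equal $q$.
\item If it equals $2q-1$, Lemma~\ref{num-green-lem} says $x_i$ is yellow at time-step $t+1$. Then $x_i$ is green at time-step $t$ and not treated at time-step $t+1$, so $x_i\in (V^i\cap A_t)-A_{t+1}$ by Step 1. Since $x_i$ is not green at time-step $t+1$, the bound in Step 2 improves by one: $2q-1\le |V^i\cap A_{t+1}|+|V^i\cap A_t|-1$. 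Again both terms must equal $q$.
\end{itemize}

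In either case $|V^i\cap A_t|=|V^i\cap A_{t+1}|=q$. Since $|A_t|,|A_{t+1}|\le q$, this gives $A_t\subseteq V^i$ and $A_{t+1}\subseteq V^i$.

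\textbf{Main obstacle.} The delicate step is the $2q-1$ case. There the naive count allows one of the two treatment sets to have only $q-1$ vertices in $V^i$. The extra information from Lemma~\ref{num-green-lem}, that $x_i$ is yellow rather than red or green, is exactly what removes one from the bound. I also need to check that Step 1 is stated carefully, since Lemma~\ref{num-green-lem} records only the size bound while the proof uses the stronger set equality $V^i\cap G_t=V^i\cap A_t$.
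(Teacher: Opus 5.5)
Your proof is correct, but it is organized differently from the paper's.

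The paper uses the four categories from Lemma~\ref{num-green-lem} for $|V^i\cap G_t|$: fewer than $q$, exactly $q$, $2q-1$, or $2q$. It then rules out two transitions:
\begin{itemize}
\item going from fewer than $q$ to $2q$ is impossible, because the count rises by at most $q$ per step;
\item going from fewer than $q$ to $2q-1$ is impossible by the same observation you use ($x_i$ is yellow at $t+1$, so it was green at $t$ and is lost at $t+1$).
\end{itemize}
This pins $|V^i\cap G_t|=q$. The paper then shows $A_{t+1}\subseteq V^i$, since all $q$ non-green vertices must be treated. Separately, it shows $A_t\subseteq V^i$ by looking back to time-step $t-1$ and splitting on whether $|V^i\cap G_{t-1}|\le q$ or $|V^i\cap G_{t-1}|\ge 2q-1$.

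You instead make the set identity $V^i\cap G_t=V^i\cap A_t$, which is implicit in the proof of Lemma~\ref{num-green-lem}, a standalone step. Combined with $G_{t+1}\subseteq A_{t+1}\cup G_t$, it gives the single inequality $|V^i\cap G_{t+1}|\le|V^i\cap A_t|+|V^i\cap A_{t+1}|$, improved by one when $x_i$ is yellow. This yields both containments at once. It also gives $|V^i\cap G_t|=|V^i\cap A_t|=q$ as a by-product rather than an intermediate target, and it never needs time-step $t-1$, so it is the more economical argument.

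Both proofs use the same ingredients: the width bound; the fact that once some vertex of $V^i$ other than $x_i$ is non-green, every untreated vertex of $V^i$ is non-green; and the yellow-$x_i$ refinement in the $2q-1$ case. Nothing is lost by your reorganization.
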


\begin{proof}
By Lemma~\ref{num-green-lem},  at each time-step, the number of green vertices in $V^i$ is in one of the following categories:  (i) less than $q$, (ii) $q$, (iii) $2q-1$, or (iv) $2q$.  By hypothesis, the number of green vertices in $V^i$ is in category (i)  or (ii) at time-step $t$ and in (iii) or (iv) at time-step $t+1$.  Vertices turn green only when they are treated, so the number of green vertices in $V^i$ increases by at most $q$ in any one time-step.  Therefore, it is impossible to go from category (i) to category (iv) in one time-step.
We next show that it is impossible to go from  category (i) to category (iii) in one time-step.
  By Lemma~\ref{num-green-lem}, if there are exactly $2q-1$ green vertices at time-step $t+1$, then $x_i$ is yellow at time-step $t+1$, so $x_i$ is green at time-step $t$.   This means there are at most $q-2$ green vertices in $V^i$ at time-step $t$ that remain green  at time-step $t+1$ without being treated.  So the remaining $q+1$ must be treated, which contradicts the protocol being width $q$.

  By hypothesis, the number of green vertices in $V^i$ at time-step $t+1$ is in  category (iii) or (iv), so the number of green vertices in $V^i$ at time-step $t$ must be in category (ii).  Thus there are $q$ vertices in $V^i$ that are non-green at time-step $t$, and if we do not include all of them in $A_{t+1}$,
  then the set of green vertices  in $V^i$ at time $t+1$ will be $V^i \cap A_{t+1}$, and this will have size $q$, which is less than $2q-1$ a contradiction. Thus $A_{t+1} \subseteq V^i$.

Hence we know $A_{t+1} \subseteq V^i$ and  $|V^i \cap G_t| = q$, and 
it remains to show $A_t \subseteq V^i$. If  $|V^i \cap G_{t-1}| \le q$, all  vertices in $V^i$ will be non-green at time $t$ except those in $A_t$. Thus $|A_t| = q$ and $A_t \subseteq V^i$.  Otherwise,  $|V^i \cap G_{t-1}| \ge 2q-1$.   All vertices in $V^i$, other than $x_i$, have the same neighbor set.  If they have no red neighbor at time-step $t$, then they   remain green at time $t$, a contradiction, since $2q-1 > q$.  Hence these vertices must have a red neighbor at time-step $t$ and then the only vertices of $V^i$ that are green at time-step $t$ are those in $A_t$, so $|A_t| = q$ and $A_t \subseteq V^i$. 
\end{proof}

We are now ready to prove the main result of this section.   

\begin{thm}\label{thm:root-complete-g} 
 If $q \ge 2$ and $m \geq 2q+3$, then $\tau(H_{m,2q}) = q+1$.

\end{thm}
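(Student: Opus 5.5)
The upper bound $\tau(H_{m,2q})\le q+1$ is immediate from Proposition~\ref{prop:upperbdK} with $n=2q$. For the lower bound, $H_{2q+3,2q}$ is a subgraph of $H_{m,2q}$ when $m\ge 2q+3$, so by Proposition~\ref{subgraph} it suffices to show that no protocol $(A_1,\dots,A_N)$ of width $q$ clears $H_{2q+3,2q}$. I will argue by contradiction, assuming such a protocol exists.

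For each clique $V^i$, initially $|V^i\cap G_0|=0$ and finally $|V^i\cap G_N|=2q$. So there is a \emph{last} time-step $t_i$ at which $V^i$ has fewer than $2q-1$ green vertices while $V^i$ has at least $2q-1$ green vertices at $t_i+1$. By Lemma~\ref{two-conseq-lem}, $A_{t_i}\cup A_{t_i+1}\subseteq V^i$. The pairs $\{t_i,t_i+1\}$ are therefore pairwise disjoint, and during them neither $r$ nor any vertex outside $V^i$ is treated. After $t_i+1$, clique $V^i$ stays in category (iii)/(iv) of Lemma~\ref{num-green-lem} until the end. In particular, $x_i$ may be yellow but must never be red: if $x_i$ were red, the other $2q-1$ vertices would lose greenness unless treated, which forces the count to drop to at most $q$. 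Order the cliques so that $t_1<t_2<\dots<t_{2q+3}$.

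Next, consider a clique $V^j$ with $j$ in the middle range, for example $j\le 2q+2$, so that some clique $V^k$ with $k>j$ is still ``unfinished'' at time $t_j$ (its green count is at most $q$). I will show that such a $V^k$ contains a red vertex at time $t_j$. Since nothing in $V^k$ is treated at steps $t_j,t_j+1$, the vertex $x_k$ is then non-green at $t_j+1$ and red by $t_j+2$ at the latest. Because $r$ is untreated at both $t_j$ and $t_j+1$, $r$ becomes yellow and then red around $t_j+1$ or $t_j+2$. Consequently every finished clique $V^i$ with $i<j$ has $x_i$ adjacent to a red $r$. Since $x_i$ must never turn red, $x_i$ must be treated within one step of each time $r$ is red. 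Moreover, $r$ cannot be re-treated until after $t_j+1$, and for $i<j$ the vertex $x_i$ cannot be treated at $t_j$ or $t_j+1$ either.

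The final step is a counting argument. For $j$ large, about $j-1\ge q+1$ finished cliques each need their $x_i$ treated at a specific one or two steps right after $t_j+1$, because $x_i$ went yellow at $t_j+1$ or $t_j+2$. That same step must also accommodate treating $r$ or the continuing work on $V^{j}$ or $V^{j+1}$. This exceeds width $q$, giving the contradiction. I expect this step to be the main obstacle: pinning down exactly when $r$ and each $x_i$ go red relative to $t_j$. The cleanest route is to choose $j=q+2$ or the last index, and to track precisely the yellow-to-red timing at $r$ (one step of grace) and at the $x_i$ (one further step of grace). The aim is to show that at least $q+1$ vertices $x_i$ must all be treated at the single step $t_j+2$. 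The threshold $2q+3$ should be what makes this count exceed $q$, and the protocol for $H_{6,4}$ (where $m=6<7$) is a useful sanity check that the argument must genuinely use this bound.
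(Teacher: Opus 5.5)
Your reduction to $H_{2q+3,2q}$, the window $\{t_j,t_j+1\}$ in which only $V^j$ is treated (from Lemma~\ref{two-conseq-lem}), and your observation about $r$ are sound. That observation is: if $r$ is red at $t_j+1$, then the $j-1\ge q+1$ finished cliques all have $x_i$ yellow at $t_j+1$, so all of them must be treated at $t_j+2$. This is exactly the first case of the paper's proof.

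\textbf{The gap.} You do not handle the complementary case, where $r$ is not red at $t_j+1$. Then $r$ is green at $t_j$, and no $x_k$ is red at $t_j$.
\begin{itemize}
\item You try to exclude this case by claiming that an unfinished clique $V^k$ with $k>j$ has at most $q$ green vertices, or a red vertex, at $t_j$. Nothing supports this. Since $t_k$ is only the \emph{last} crossing time, $V^k$ may be entirely green at $t_j$ and drop later.
\item Even granting a red vertex $z\neq x_k$ in $V^k$ at $t_j$, you only get $x_k$ yellow at $t_j$ and red at $t_j+1$. Then $r$ is yellow at $t_j+1$, and red at $t_j+2$ at the earliest (and not at all if treated then).
\item In that situation the finished $x_i$ can be spread over $A_{t_j+2}\cup A_{t_j+3}$, which has $2q\ge q+1$ slots. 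So the count you describe does not close, as you suspected.
\end{itemize}

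\textbf{What the paper does instead.} It changes the extremal choice and uses the threshold a second time.
\begin{itemize}
\item It takes $t+2$ to be the first time-step after which at least $q+2$ cliques always have at least $2q-1$ green vertices. With $V^{q+2}$ the clique crossing at $t+2$, Lemma~\ref{two-conseq-lem} gives $A_{t+1}\cup A_{t+2}\subseteq V^{q+2}$.
\item If $r$ is red at $t+2$, your contradiction applies.
\item Otherwise $r$ is green at $t+1$, so no $x_k$ is red at $t+1$.
\item Since $|A_t|\le q$ while $q+1$ cliques $V^{q+3},\dots,V^{2q+3}$ remain, some $V^k$ receives no treatment at $t$, $t+1$, $t+2$.
\item Then $x_k$, not red at $t+1$ and untreated at $t$ and $t+1$, must be green at $t$ without treatment. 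So no vertex of $V^k$ is red at $t$. This forces all of $V^k$ to be green at $t$, and it stays green at $t+1$.
\item Hence $q+2$ cliques are already nearly full at $t$ and $t+1$, contradicting the minimality of $t+2$.
\end{itemize}
This pigeonhole step, $q+1$ unfinished cliques against $|A_t|\le q$, is where the second half of the bound $2q+3=(q+1)+1+(q+1)$ enters, and your plan does not contain it. Note also that under your last-crossing ordering, finding such a fully green $V^k$ would not be a contradiction. So you need the paper's count-based extremal choice, or an equivalent, in place of ordering cliques by their last crossing times.
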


\begin{proof} Using Proposition~\ref{subgraph}, since $K_{2q}$ is a subgraph of $H_{m,2q}$, we have $\tau(H_{m,2q})\geq q$.   By Proposition~\ref{prop:upperbdK}, we know $\tau(H_{m,2q}) \le q+1$, so $\tau(H_{m,2q}) \in \{q,q+1\}$.  If $m\geq 2q+3$ then as $H_{2q+3,2q}$ is a subgraph of $H_{m,2q}$, we have $q+1 \ge \tau(H_{m,2q})\geq  \tau(H_{2q+3,2q})$. Thus, it suffices to show that $q+1 \le \tau(H_{2q+3,2q})$.

For a contradiction, assume 
 $\tau(H_{2q+3,2q}) \le q$.  Let $(A_1, A_2, \ldots, A_N)$ be a width $q$ protocol that clears 
$H_{2q+3,2q}$.
We  consider the first time-step $t+2$ after which there are always at least  $q+2$  of the $V^i$ that each have at least $2q-1$ green vertices. Without loss of generality, we may assume that these  cliques are $V^1, V^2, \ldots, V^{q+2}$, and that $V^{q+2}$ is the latest of these  to have at least $2q-1$ green vertices. By Lemma~\ref{two-conseq-lem},   $A_{t+1} \cup A_{t+2} \subseteq V^{q+2}$, thus no vertices outside of  $V^{q+2}$ are treated at time-steps $t+1$ and $t+2$.

If there exists $i: 1\leq i\leq q+1$,  and a vertex of $V^i$ that is non-green at time-step $t$ or $t+1$, then all vertices of $V^i$ will be red at time-step $t+2$, a contradiction.  Thus, all vertices in 
$V^1 \cup V^2 \cup \cdots \cup V^{q+1}$ are green at time-steps $t$ and $t+1$.  
Since $x_1, x_2, \ldots x_{q+1}$ are neighbors of root $r$, then $r$ must be green at time-step $t$. 

First suppose $r$ is red at time-step $t+2$.  Then vertices   $x_1, x_2, \ldots, x_{q+1}$ become yellow at time-step $t+2$, and since $|A_{t+3}|\leq q$, at least one of $x_1, x_2, \ldots, x_{q+1}$ is not in $A_{t+3}$ and becomes red at time-step $t+3$, say it is $x_1$. 
Then all vertices in $V^1$ are non-green at time-step $t+3$ and there are at most $q+1$ cliques with at least $2q-1$ green vertices. This contradicts our choice of $t+2$.  

Otherwise,   $r$ is  green or yellow at time-step $t+2$. Since $r \not\in A_{t+1}$, none of the neighbors $x_{q+3}, x_{q+4}, \ldots, x_{2q+3}$ can be red at 
  time-step $t+1$. 
Since $|A_t|\leq q$, there exists $j: q+3 \le j \le 2q+3$ for which $V^j \cap A_t = \emptyset$, and therefore, no vertices of $V^j$ are treated at  any of time-steps 
  $t, \, t+1, \, t+2$. Since $x_j$ is not red at time-step $t+1$ and not treated at time-steps $t$ or $t+1$, then $x_j$ is treated at time-step $t-1$ or earlier and remains green at time-step $t$ without being treated. Thus, no vertex in $V^j$ is   red at time-step $t$. If  there exists $y_j \in V^j$  that is yellow at time-step $t$ then $y_j \neq x_j$ and $y_j$ will only turn red due to adjacency with a third vertex $z_j \in V^j$ that is red at time-step $t$, a contradiction.
  Thus, every vertex in $V^j$ is green at time-step $t$,   which means  that by time-step $t$, there are already $q+2$ of the $V^i$ that each have at least $2q-1$ green vertices, contradicting our choice of $t+2$. 
\end{proof}

The protocol in Table~\ref{table:6K4s}, which  clears $H_{6,4}$ with  at most two vertices treated per time-step, uses two consecutive time-steps to clear the vertices in each $V^i$, and these time-steps are necessary by Lemma~\ref{two-conseq-lem}. When clearing $V^2$ and $V^3$, the root $r$ is allowed to become red again after having been treated. However, if $r$ turns red after the vertices in three of the $V^i$  have become green,  then at least three  green neighbors of $r$ turn yellow, and  at most two of them can be treated at the same time-step, so  at the next time-step, the vertices in at least one  of these $V^i$  will no longer be green. Thus, $r$ must remain green or yellow in the protocol when clearing the vertices in the last three $V^i$.  
The vertices in the fourth and fifth $V^i$ are cleared in the necessary two consecutive time-steps by   first treating the neighbors of $r$ in the remaining red $V^i$'s, before proceeding. A protocol analogous to that in Table~\ref{table:6K4s} shows that if $m\leq 2q+2$, then $H_{m,2q}$ can be cleared with at most $q$ vertices treated per time-step. We record this in the next observation. 

\begin{obs} \label{obs:2q-2apples} \rm If $1\leq m\leq 2q+2$, then $\tau(H_{m,2q})=q$. \end{obs}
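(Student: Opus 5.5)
The lower bound is immediate: $K_{2q}$ is a subgraph of $H_{m,2q}$ and $\tau(K_{2q})=q$, so Proposition~\ref{subgraph} gives $\tau(H_{m,2q})\ge q$. The content of the observation is therefore the upper bound. Since $H_{m,2q}$ is a subgraph of $H_{2q+2,2q}$ whenever $m\le 2q+2$, Proposition~\ref{subgraph} again reduces everything to exhibiting one width $q$ protocol that clears $H_{2q+2,2q}$. I would build it by generalizing the protocol for $H_{6,4}$ in Table~\ref{table:6K4s}.

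The protocol has an early phase that clears $V^1,\dots,V^{2q-1}$ and a late phase that clears the last three cliques $V^{2q},V^{2q+1},V^{2q+2}$.

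\emph{Early phase.} Each clique $V^i$ is cleared in two consecutive steps. First treat $q$ vertices of $V^i$ other than $x_i$. Then treat $x_i$ together with the remaining $q-1$ non-root vertices. Between cliques we insert steps that treat $r$ along with at most $q-1$ of the already-cleared $x_j$'s; these $x_j$ turned yellow, so re-treating them keeps their cliques green. Following Table~\ref{table:6K4s}, I would let $r$ become red again while at most $q$ cleared cliques are present. This is safe because at most $q$ yellow $x_j$'s can be rescued in one step. Once $q+1$ cliques are cleared, $r$ must be kept non-red.

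\emph{Late phase.} Before clearing each of the last three cliques, in steps where $r$ is treated or is only yellow, pre-treat the attachment vertices $x_j$ of the still-red cliques so that $r$ sees no red neighbor. This mirrors steps $11$--$13$ and $16$--$17$ for $H_{6,4}$, where $x_5,x_6$ are treated alongside $r$. Then clear each remaining clique in two consecutive steps that use only clique vertices, as Lemma~\ref{two-conseq-lem} forces. In those steps $r$ may be yellow for one step but never red.

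\emph{Counting.} I would tabulate $R_t,Y_t,G_t$ generically, as in Table~\ref{Table:6K4s2}, and check that every step treats at most $q$ vertices. The key point is that the pre-treatments needed when entering the late phase number at most $q-1$ attachment vertices besides $r$, since $2q+2-(q+1)-1\le q$. Equivalently, when $r$ is first required to stay green, the number of still-red cliques minus one is at most $q$, so each such step treats $r$ plus at most $q-1$ $x_j$'s. This is exactly where $m\le 2q+2$ enters, and it matches the failure mode in the proof of Theorem~\ref{thm:root-complete-g}.

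The main obstacle is the bookkeeping in the late phase. One must ensure that during each two-step clique clearing, no already-cleared $x_j$ is yellow and left untreated, and that $r$ is not red. I would handle this with a periodic schedule that alternates $r$ with batches of $x_j$'s. I would verify it by induction on the number of cleared cliques, maintaining the invariant that every cleared clique is fully green except possibly $x_j$ yellow, and that every yellow $x_j$ is treated at the next step.
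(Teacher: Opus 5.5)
Your route is the paper's: the lower bound from $K_{2q}$, reduction to $m=2q+2$ via Proposition~\ref{subgraph}, and a generalization of Table~\ref{table:6K4s}, which the paper itself only asserts exists. However, the schedule you specify fails exactly where $m\le 2q+2$ is used. Take the first clique cleared after $q+1$ cliques are green, say $V^{q+2}$, cleared at steps $t,t+1$ with $A_t\cup A_{t+1}\subseteq V^{q+2}$ (Lemma~\ref{two-conseq-lem}, whose proof does not use $m$). The root $r$ is untreated at $t$ and $t+1$ and must not be red at $t+1$; otherwise at least $q+1$ cleared attachment vertices are yellow at $t+1$ and only $q$ can be rescued at $t+2$. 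So $r$ must be green at $t$, and no neighbour of $r$ may be red at $t$. The other still-red cliques are $V^{q+3},\dots,V^{2q+2}$. There are exactly $q$ of them; your own expression $2q+2-(q+1)-1$ equals $q$. Their attachment vertices are untreated at $t$ and have red clique-mates, so all $q$ must be treated at $t-1$. That step is then full and cannot contain $r$, so your claim that each pre-treatment step is ``$r$ plus at most $q-1$ $x_j$'s'' is off by one. Pre-treating while $r$ is ``only yellow'' also fails, since an untreated yellow $r$ at $t-1$ is red at $t$. Note too that steps 13 and 17 of Table~\ref{table:6K4s} treat $\{x_5,x_6\}$ \emph{without} $r$, contrary to your reading.

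The missing idea is an extra step. Treat $r$ and $x_{q+2}$ at $t-2$, and only $x_{q+3},\dots,x_{2q+2}$ at $t-1$. Then $r$ has no red neighbour and stays green at $t-1$ and $t$, and $x_{q+2}$, which is yellow at $t-1$, goes into $A_t$. This is exactly steps 12--15 of Table~\ref{table:6K4s}. Your count is correct only for $V^k$ with $k\ge q+3$. There $2q+2-k\le q-1$, so a single step $\{r\}\cup\{x_j:j>k\}$ can precede the clearing pair (with $x_k\in A_t$), and $r$, yellow at $t+1$, is re-treated at $t+2$. Relatedly, your phase split is wrong for $q\ge 3$. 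As you note yourself, the regime in which $r$ may turn red ends once $V^{q+1}$ is cleared, so the late phase is $V^{q+2},\dots,V^{2q+2}$ ($q+1$ cliques), not the last three; $2q-1=q+1$ only when $q=2$. Finally, the statement implicitly needs $q\ge 2$: $H_{3,2}$ is a spider with three legs of length $2$, not a caterpillar, so $\tau(H_{3,2})=2\neq 1$.
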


\noindent Hence the treatment number of $H_{m,2q}$ is $q$ if $1\leq m\leq 2q+2$ and $q+1$ if $m\geq 2q+3$. 

In our proof  of Theorem~\ref{thm:depth8}, we establish the lower bound   $\tau(BT(8)) \ge 3$  by showing that $BT(8)$ does not satisfy the $2$-bound, and applying Corollary~\ref{lower-bnd}, but this technique has its limitations.  For our final result, we build onto $H_{m,2q}$ to construct the graph $H'_{m,2q}$ that also has treatment number equal to $q+1$. Thus graphs in this family have arbitrarily large treatment number.   We show that $H'_{m,2q}$  satisfies the $2$-bound and therefore,  by Definition~\ref{gamma-bound-def},
satisfies the $\gamma$-bound for any $\gamma \ge 2$.  Thus our technique of using the contrapositive of Corollary~\ref{lower-bnd}
to prove a lower bound on the treatment number will not be helpful for this family of graphs.

\begin{defn} \rm Let $q \geq 2$.  The graph $H'_{m,2q}$ is constructed from the graph $H_{m,2q}$ by adding an additional path $P$ attached to root $r$ with $2q-1$ new interior vertices.  Label the vertices of $P$ as $v_1, v_2, \ldots, v_{2q}$ where $v_1$ is adjacent to $r$ and $v_i$ is adjacent to $v_{i+1}$ for $1 \le i \le 2q-1$. 
\label{HQ-def}
\end{defn}
We observe  that $|V(H'_{m,2q})| = 2q(m+1)+ 1.$

\begin{thm}
If $m\geq 2q+3$, then the graph $H'_{m,2q}$, constructed in Definition~\ref{HQ-def}, satisfies the $2$-bound and has $\tau(H'_{m,2q}) = q+1$.
\label{HQ-thm}
\end{thm}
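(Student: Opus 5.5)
The statement has two parts: the value of $\tau(H'_{m,2q})$ and the $2$-bound. I will handle the treatment number first using earlier results, and then verify the $2$-bound directly with an explicit family of nested sets.

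\emph{Treatment number.} Since $H_{m,2q}$ is a subgraph of $H'_{m,2q}$, Proposition~\ref{subgraph} and Theorem~\ref{thm:root-complete-g} (using $m \ge 2q+3$) give $\tau(H'_{m,2q}) \ge \tau(H_{m,2q}) = q+1$. For the upper bound, $W=\{r\}$ is a vertex cut of $H'_{m,2q}$. The components of $H'_{m,2q}-r$ are the $m$ cliques $V^1,\dots,V^m$, each isomorphic to $K_{2q}$, and the path $P$ on $v_1,\dots,v_{2q}$. We have $\tau(K_{2q}) = q$, and $\tau(P)\le 1$ because a path is a caterpillar. Proposition~\ref{prop:upperbd-all} then yields $\tau(H'_{m,2q}) \le \lceil 1/2\rceil + q = q+1$.

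\emph{The $2$-bound.} By Definition~\ref{gamma-bound-def} with $\gamma=2$, for each $p$ with $3 \le p \le |V(H'_{m,2q})|-1$ we need a set $S$ with $p-1\le |S|\le p$ and $|\bnd(S)|\le 3$. I will produce a set with $|S|=p$ exactly and $|\bnd(S)|\le 2$. Write $p = 2qk + s$ with $0 \le k \le m$ and $0 \le s \le 2q$; this representation exists for every $0\le p\le 2qm+2q = |V(H'_{m,2q})|-1$. Let
\[ S = V^1\cup\cdots\cup V^k \cup \{v_{2q}, v_{2q-1}, \ldots, v_{2q-s+1}\}, \]
that is, $k$ whole cliques together with the last $s$ vertices of the path, counted from the far end $v_{2q}$. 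Then $|S| = p$.

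Next I will compute the boundary. Each whole clique $V^i$ has its only external neighbor at $r$, through $x_i$, so $V^1\cup\cdots\cup V^k$ contributes at most $r$ to $\bnd(S)$. The terminal path segment $\{v_{2q-s+1},\dots,v_{2q}\}$ has only one external neighbor: $v_{2q-s}$ if $s<2q$, or $r$ if $s=2q$. Since $r\notin S$, we get $|\bnd(S)|\le 2$.

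The main point needing care is only the bookkeeping that every $p$ in the range is covered. The path contributes $2q+1$ consecutive sizes $0,\dots,2q$, while each clique adds $2q$, so the ranges overlap and leave no gaps. The top value $p=|V(H'_{m,2q})|-1$ is the case $k=m$, $s=2q$, where $S$ is everything except $r$ and $\bnd(S)=\{r\}$. With this, the $2$-bound holds, and the proof is complete.
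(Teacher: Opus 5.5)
Your proof is correct and has the same structure as the paper's. The lower bound comes from $H_{m,2q}\subseteq H'_{m,2q}$ and Theorem~\ref{thm:root-complete-g}, and the $2$-bound comes from sets made of whole cliques plus a segment of $P$, whose boundary is $\{r\}$ together with at most one path vertex.

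The small differences work in your favor. For the upper bound, you apply Proposition~\ref{prop:upperbd-all} to $H'_{m,2q}$ directly, with $P$ as an extra component of $H'_{m,2q}-r$. The paper instead appends two treatment steps for $P$ to the $H_{m,2q}$ protocol, which tacitly needs $r$ to be treated at the last step of that protocol, since $v_1$ is red throughout it. For the $2$-bound, allowing $s=2q$ covers $p=|V(H'_{m,2q})|-1=2q(m+1)$, a value the paper's parametrization with $k\le m$ and $\ell<2q$ misses.
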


\begin{proof}
First we show $\tau(H'_{m,2q}) = q+1.$
In Theorem~\ref{thm:root-complete-g}  we show $\tau(H_{m,2q}) = q+1$, and by Proposition~\ref{subgraph}, since $H_{m,2q}$ is a subgraph of $H'_{m,2q}$ we know $q+1 \le \tau(H'_{m,2q})$.  Indeed, $\tau(H'_{m,2q}) = q+1$ since we can use the protocol from the proof of Proposition~\ref{prop:upperbdK} to clear subgraph $H_{m,2q}$ and then clear the vertices on $P$ in two more steps, the first with treatment set $\{v_1, v_2, \ldots, v_q\}$ and the second with $\{v_{q+1}, v_{q_2}, \ldots, v_{2q} \}$.

Next we construct a set $S$ of vertices in $H'_{m,2q}$ for each cardinality $p: 1\leq p\leq |V(H_q)|-1$  with $|bnd(S)| \le 2$.  Write $p=(2q)k+\ell$ with $0 \le k \le m$  and $0\leq \ell < 2q$.  Since $p \ge 1$, we know $k$ and $\ell$ are not both $0$.  Let $S$ consist of the vertices in $k$ copies of $K_{2q}$, and when $\ell \neq 0$ we also include   $\{v_1, v_2, \ldots, v_{\ell}\}$.
Then $\bnd(S) \subseteq \{r,v_{\ell + 1} \}$, so $|\bnd(S)| \le 2$ and $H'_{m,2q}$ satisfies the $2$-bound.
 \end{proof}

\section{Concluding remarks}

 We conclude with a discussion of the  related concept of  isoperimetric peak and some open questions.
In our work proving that the complete binary tree of depth $8$ has treatment number $3$ we described all sets of vertices having a fixed boundary size of $3$ or smaller.  
A related problem is minimizing $|\bnd(S)|$ over all sets $S$ of vertices with $|S| = k$,  see \cite{H04} for more information.  This latter problem, called the \emph{vertex isoperimetric problem,} fixes $|S|$ while our problem fixes $|\bnd(S)|$. Most research on the vertex isoperimetric problem focuses on determining the vertex isoperimetric peak of a graph $H$, which is the quantity $\max_{1 \leq k \leq |V(H)|} \min_{S \subset V(H), |S|=k} |\bnd(S)|$ and is denoted by $\Phi_v(H)$.  The vertex isoperimetric peak does not have a direct relationship to the treatment number of a graph. 
In \cite{FirstTreatPaper} we show $\Phi_v(P_n \square P_n)=n$ and   $\tau(P_n \square P_n) = \lceil \frac{n+1}{2}\rceil$, so the vertex isoperimetric peak can be larger than  or equal to the treatment number.  Figure~\ref{fig-iso} gives an example of a graph $H$ which has 
  $\tau(H) = 2$ and the table in the figure shows that $\Phi_v(H) = 1$.

\begin{figure}[htbp] \small 
\centering 
\begin{tikzpicture}[scale=.8]
\draw (-1,3.9)--(9.2,3.9); 
\draw (1,-.6)--(1,4.5); 
\draw (6.9,-.6)--(6.9,4.5); 

\draw[thick] (12,0)--(12,3);
\draw[thick] (11,1)--(11,2)--(12,3)--(13,2)--(13,1);

\filldraw[black]

(12,3) circle [radius=3pt]
(12,2) circle [radius=3pt]
(12,1)circle [radius=3pt]
(12,0)circle [radius=3pt]
(11,2) circle [radius=3pt]
(13,2) circle [radius=3pt]
(11,1) circle [radius=3pt]
(13,1)circle [radius=3pt]
;
\node(0) at (0,4.2)  {\small $|S|$};
\node(0) at (4,4.2)  {\small $S$ that minimizes $|\bnd(S)|$};
\node(0) at (8,4.2)  {\small $|\bnd(S)|$};
\node(0) at (0,3.5)  {$1$};
\node(0) at (0,3)  {$2$};
\node(0) at (0,2.5)  {$3$};
\node(0) at (0,2)  {$4$};
\node(0) at (0,1.5)  {$5$};
\node(0) at (0,1)  {$6$};
\node(0) at (0,.5)  {$7$};
\node(0) at (0,0)  {$8$};

\node(1) at (12.3,3) {$x$};
\node(1) at (12.3,2) {$c$};
\node(1) at (12.3,1) {$d$};
\node(1) at (12.3,0) {$e$};
\node(1) at (13.3,2) {$a$};
\node(1) at (13.3,1) {$b$};
\node(1) at (11.3,2) {$f$};
\node(1) at (11.3,1) {$g$};

\node(2) at (1.78,3.5)  {$b$};
\node(2) at (1.97,3)  {$a,b$};
\node(2) at (2.15,2.5)  {$c,d,e$};
\node(2) at (2.37,2)  {$a,b,f,g$};
\node(2) at (2.56,1.5)  {$a,b,c,d,e$};
\node(2) at (2.8,1)  {$a,b,c,d,e,x$};
\node(2) at (3,.5)  {$a,b,c,d,e,f,g$};
\node(2) at (3.22,0)  {$a,b,c,d,e,f,g,x$};

\node(3) at (8,3.5)  {$1$};
\node(3) at (8,3)  {$1$};
\node(3) at (8,2.5)  {$1$};
\node(3) at (8,2)  {$1$};
\node(3) at (8,1.5)  {$1$};
\node(3) at (8,1)  {$1$};
\node(3) at (8,.5)  {$1$};
\node(3) at (8,0)  {$0$};
\end{tikzpicture}

\caption{A graph $H$ with $\tau(H)=2$ and $\Phi_v(H)=1$.} 

\label{fig-iso}\end{figure}

In this paper we have studied the minimum width of a protocol that clears a graph without regard for the number of time-steps involved.  

\begin{ques} What is the minimum number of time-steps required to clear a graph $H$  using a protocol of width $\tau(H)$?  What about functions of the minimum number of time-steps and width? 
\end{ques}

\begin{ques}
What are other interesting families of graphs with a jump in the treatment number depending on the number of components remaining  after a vertex cut?
\end{ques}

\begin{ques}
What is $\tau(BT(d))$ for $d\geq 11$?  
\end{ques}

\noindent {\bf Acknowledgement}: M.E. Messinger acknowledges research support from the Natural Sciences and Engineering Research Council of Canada (grant application 2018-04059).

 \end{document}